\newtheorem{theorem}{Theorem}[section]
\newtheorem{lemma}[theorem]{Lemma}
\newtheorem{proposition}[theorem]{Proposition}
\newtheorem{corollary}[theorem]{Corollary}
\theoremstyle{remark}
\newtheorem{remark}[theorem]{Remark}
\def\R{\mathbb{R}}
\def\d{\partial}
\def\dif{{\mathrm d}}
\def\ut{\tilde{u}}
\def\rt{\tilde{\rho}}
\def\vt{\tilde{v}}
\def\ep{\varepsilon}
\def\hv{\hat{v}}
\def\hu{\hat{u}}
\def\mB{\mathcal{B}}
\def\mD{\mathcal{D}}
\def\mE{\mathcal{E}}
\def\mF{\mathcal{F}}
\def\mL{\mathcal{L}}
\def\mQ{\mathcal{Q}}
\def\mR{\mathcal{R}}
\def\lpp#1{\|#1\|}
\newcommand{\qu}{\quad}
\begin{document}

\title{
On the stability of the spherically symmetric solution
to an inflow problem for an isentropic model of compressible viscous fluid}
\date{}
\author[$1$]{Yucong {\sc Huang}\footnote{Corresponding Author. Email Adress: huang@c.titech.ac.jp}}
\author[$2$]{Itsuko {\sc Hashimoto}} 
\author[$1$]{Shinya {\sc Nishibata}}

\affil[$1$]{\small Department of Mathematical and Computing Sciences, \protect\\ School of Computing, Institute of Science Tokyo, Tokyo 152-8552, Japan}
\affil[$2$]{\small Faculty of Mechanical Engineering, Kanazawa University, Ishikawa 920-1164, Japan}

\maketitle

\vspace{-6mm}

\begin{abstract}
    We investigate an inflow problem for the multi-dimensional isentropic compressible Navier-Stokes equations. The fluid under consideration occupies the exterior domain of the unit ball $\Omega=\{x\in\R^n\,\vert\, |x|\ge 1\}$ and a constant stream of mass is flowing into the domain from the boundary $\d\Omega=\{|x|=1\}$. It is shown in \cite{h-m21} that if the fluid velocity at the far-field is zero, then there exists a unique spherically symmetric stationary solution, denoted by $(\rt,\ut)$. In this paper, we show that either $\rt$ is monotone increasing or $\rt$ attains a unique global minimum and this is classified by the boundary condition of density. In addition, we derive a set of spatial decay rates for $(\rt,\ut)$ which allows us to prove a time asymptotic stability of $(\rt,\ut)$ using the energy method. More specifically, we prove this under small initial perturbation on $(\rt,\ut)$ provided that the density at the far-field is strictly positive but suitably small, in other words, the far-field state of the fluid is not vacuum but suitably rarefied. The main difficulty for the proof is the boundary terms that appears in the a-priori estimates due to the in-flowing boundary condition. We resolve this issue by reformulating the problem in Lagrangian coordinate system.
\end{abstract}

\paragraph{Keywords.}
Compressible Navier--Stokes equation, Inflow Problem, Stationary wave,
\vspace{-6mm}
\paragraph{AMS subject classifications.}
35B35, 35B40, 76N15.

{\hypersetup{linkcolor=blue}
	\tableofcontents
}

\section{Introduction}
 The
Navier-Stokes equation
for the isentropic motion
of  compressible viscous gas
in the Eulerian coordinate
 is  the system of equations given by
\begin{subequations}
\label{ns}
\begin{gather}
\d_t \rho  + \textrm{div} (\rho U) = 0, \label{ns1} \\
\rho \{ \d_t U + (U \cdot \nabla) U \} =
\nu \Delta U +(\nu + \lambda) \nabla \textrm{div} U
- \nabla P(\rho).
\label{ns2}
\end{gather}
\end{subequations}
We study the asymptotic behaviour of a solution
$(\rho, U)$ to (\ref{ns}) in an unbounded exterior domain %
$\Omega \vcentcolon= \{ z \in \R^n \; \vert \; |z| > 1 \}$, where $n\ge 2$ is the spatial dimension. Here $\rho>0$ is the mass density; $U=(U_1,\dots,U_n)$ is the velocity of gas; $P(\rho) = K\rho^\gamma \; (K>0, \gamma \ge 1)$ is the pressure
with the adiabatic exponent $\gamma$; $\nu$ and $\lambda$ are constants called viscosity-coefficients satisfying $\nu>0$ and $2\nu + n \lambda \ge 0$. In the equations (\ref{ns}), we use notations
\begin{gather*}
    \textrm{div} U \vcentcolon= \sum_{i=1}^n \d_i U_i, \qquad \nabla P \vcentcolon= ( \d_1 P, \dots, \d_n P ), \qquad \d_i \vcentcolon= \d_{z_i},\\
    (U \cdot \nabla) U \vcentcolon= \big((U \cdot \nabla) U_1, \dots, (U \cdot \nabla) U_n\big), \qquad (U \cdot \nabla) U_j \vcentcolon= \sum_{i=1}^n U_i \d_i U_j,\\
    \Delta U \vcentcolon= (\Delta U_1, \dots, \Delta U_n), \qquad \Delta U_j \vcentcolon= \sum_{i=1}^n \d_i^2 U_j,
\end{gather*}
It is assumed that the initial data is spherically symmetric. Namely, for $r \vcentcolon= |z|$
\begin{equation*}
\rho_0(z) = \rho_0(r), \quad U_0(z) = \dfrac{z}{r} u_0(r),
\end{equation*}
where $u_0(r):[1,\infty)\to \R$ is the radial component of $U_0(z)$. Under these assumptions, it is shown in
\cite{itaya85} that
the solution $(\rho, U)$ is also spherically symmetric, where the spherically symmetric solution is a solution to (\ref{ns}) in the form of
\begin{equation}
\rho(z,t) = \rho(r,t), \quad U(z,t) = \frac{z}{r} u(r,t),
\label{sym}
\end{equation}
where $u(r,t):[1,\infty)\times [0,\infty)\to \R$ is the radial component of $U(z,t)$.
Substituting (\ref{sym}) in (\ref{ns}), we reduce the system (\ref{ns}) to that of the equations for $(\rho, u)(r,t)$, which is given by 
\begin{subequations}
\label{nse}
\begin{gather}
\rho_t + \frac{(r^{n-1} \rho u)_r}{r^{n-1}} = 0, \label{nse1} \\
\rho (u_t + uu_r) = \mu \Big( \frac{(r^{n-1} u)_r}{r^{n-1}}
\Big)_r
- P(\rho)_r, \label{nse2}
\end{gather}
\end{subequations}
where $\mu := 2\nu + \lambda $ is a positive constant.
 The initial data
to (\ref{nse})
is prescribed to be spatial
asymptotically constants,
\begin{subequations}
\label{iac}
\begin{gather}
\rho(r,0) = \rho_0(r)>0, \quad u(r,0) = u_0(r), \label{ic}\\
\lim_{r \to \infty} (\rho_0(r), u_0(r)) = (\rho_+, u_+), \quad 0<\rho_+ <\infty, \ \ u_+\in\R. \label{asymp}
\end{gather}
\end{subequations}
In the present paper, we consider the model where fluid is flowing into the domain at a constant flux from an inner sphere centred at the origin of radius $r=1$. Its corresponding boundary condition is given by
\begin{equation}\label{ub}
 u(1,t) = u_b, \quad \rho(1,t)= \rho_b \qquad  \text{where $u_b>0$ and $\rho_b>0$.} 
\end{equation}
At the far-field, a constant non-vacuum state for the fluid is imposed. Namely,
\begin{equation}\label{ffc}
\lim\limits_{r\to \infty}(\rho,u)(r,t) \to (\rho_+,u_+) \qquad \text{for all }  t\ge 0,
\end{equation}
where $\rho_+>0$ and $u_+$ are the constants in \eqref{asymp}. Moreover, it is assumed that the initial data (\ref{ic}) is compatible with the boundary data (\ref{ub}), namely
\begin{subequations}\label{compa}
\begin{gather} 
\rho_0(1)=\rho_b, \qquad \big\{ u_b (\rho_0)_r + \rho_b (u_0)_r + (n-1) \rho_b u_b \big\}\big\vert_{r=1}=0,  \label{compa1} \\ 
u_0(1) = u_b, \qquad \Big\{ \rho_0 u_0 (u_0)_r + \mu \Big( \frac{(r^{n-1}u_0)_r}{r^{n-1}}\Big)_r - P(\rho_0)_r \Big\} \bigg|_{r=1} = 0. \label{compa2}
\end{gather}
\end{subequations}
We remark that since the characteristic speed of \eqref{nse1} is positive on the boundary due to $u_b>0$, a boundary condition for the density $\rho$ around $r=1$ is necessary for the well-posedness of the initial boundary value problem \eqref{nse}, \eqref{iac} and \eqref{ub}.

This initial boundary value problem is
formulated to study the behaviour of compressible
viscous gas flowing from the inner sphere.
We study the case where the time asymptotic state of the solution
to the problem (\ref{nse})--(\ref{ub}) is the stationary solution, which is a solution to (\ref{nse}) independent of time $t$, satisfying the same conditions (\ref{asymp}) and (\ref{ub}). Hence the stationary solution $(\rt(r), \ut(r))$ satisfies the system of equations
\begin{subequations}
\label{st}
\begin{gather}
\frac{1}{r^{n-1}}(r^{n-1} \rt \ut)_r = 0, \label{st1} \\
\rt \ut \ut_r = \mu \Big( \frac{(r^{n-1} \ut)_r}{r^{n-1}} \Big)_r
- P(\rt)_r \label{st2}
\end{gather}
\end{subequations}
and the boundary and spatial asymptotic conditions
\begin{equation}\label{stbdry}
 (\rt,\ut)(1) = (\rho_b,u_b), \quad
 \lim_{r \to \infty} (\rt(r), \ut(r)) = (\rho_+, u_+).
\end{equation}
Multiplying $r^{n-1}$ on (\ref{st1}) and integrating the resultant equality over $(1,r)$, we obtain
\begin{equation}\label{stu}
\ut(r) = \rho_b u_b \dfrac{r^{1-n}}{\rt(r)}, \qquad \text{for $r\ge1$.}
\end{equation}
Since $n\ge2$ and $\rho_+>0$, if a solution $(\rt,\ut)(r)$ exists, then it is necessary that $\ut(r)\to 0$ as $r\to\infty$. Thus we impose further that 
\begin{equation}\label{u+}
	u_+ = 0
\end{equation}
as a necessary condition for the
existence of the stationary solution. This means the far-field velocity of the fluid is at rest.
\section{Main results}\label{sec:MR}
First, we state the existence, uniqueness and property of the stationary solution for the problem \eqref{st}--\eqref{u+}, which are summarised in the following lemma.
\begin{lemma} \label{le:st}
There exists $\ep=\ep(\rho_+,\mu,\gamma,K,n)>0$ and $C=C(\mu,\gamma,K,n)>0$ such that if $|\rho_b-\rho_+|+ u_b\le \ep$, then a unique solution $(\tilde{\rho}, \tilde{u})$ to the problem \eqref{st}--\eqref{u+} exists in a certain neighborhood of $(\rho_+, 0)$. Moreover, for $r\ge 1$
\begin{gather} 
\label{rho}|\ut(r)|\le C u_b r^{1-n}, \qquad |\rt(r)-\rho_+|\leq C\big\{ |\rho_b-\rho_{+}| + \rho_{+}^{2-\gamma} u_b^2 \big\}   r^{-2n+2},\\
\label{ur} \left|\frac{d \ut}{dr}\right| 
\leq C \Big\{ u_b + \rho_{+}^{\gamma-1}|\rho_b-\rho_+| \Big\}r^{-n}, \quad  
\left|\frac{d\rt}{dr}\right|
\leq C\Big\{ \rho_{+}^2 u_b + \rho_{+}^{\gamma} \frac{|\rho_{b}-\rho_{+}|}{u_b} \Big\}   r^{-2n+1}. 
\end{gather}
Furthermore, the sign of $\rho_b-\rho_+$ determines the behaviour of density $\rt(r)$ as follows
\begin{enumerate}[label=\textnormal{(\Roman*)},ref=\textnormal{(\Roman*)}]
    \item\label{item:I} if $\rho_b \ge \rho_+$, then there exists a unique point $r_{\ast}>1$ such that $\rt(r)$ is strictly decreasing in $1\le r< r_{\ast}$ and strictly increasing in $r> r_{\ast}$ with
    \begin{equation*}
        \min\limits_{r\ge 1} \rt(r) = \rt(r_{\ast}), \qquad  \lim\limits_{r\to\infty} \rt(r) = \rho_+.
    \end{equation*}
    \item\label{item:II} if $\rho_b < \rho_{+}$, then $\rho(r)$ is strictly increasing in $r\ge 1$.
\end{enumerate}
\end{lemma}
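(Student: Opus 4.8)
The plan is to eliminate one unknown by the mass first integral, reduce the system to a single scalar second-order ODE, read off the decay rates and the monotonicity from that ODE, and take the bare existence/uniqueness of the profile near $(\rho_+,0)$ from \cite{h-m21}. Writing $m:=\rho_b u_b>0$, equation \eqref{st1} gives $r^{n-1}\rt\ut\equiv m$, which is \eqref{stu}, and I introduce $\psi(r):=r^{n-1}\ut(r)=m/\rt(r)$, so that $\psi(1)=u_b$ and $\psi(r)\to m/\rho_+$ as $r\to\infty$. Substituting $\ut=\psi r^{1-n}$ and $\rt=m/\psi$ into \eqref{st2} and multiplying by $r^{n-1}$ gives
\begin{equation*}
  \mu\psi_{rr}=a(r,\psi)\,\psi_r+m(1-n)\,\psi\,r^{-n},\qquad
  a(r,\psi):=m\,r^{1-n}+\mu(n-1)\,r^{-1}-K\gamma m^{\gamma}\psi^{-\gamma-1}r^{n-1}.
\end{equation*}
The decisive structural fact is that, for $(\rt,\ut)$ in a small neighbourhood of $(\rho_+,0)$, one has $\psi\approx m/\rho_+$, so $a(r,\psi)\le -c_0\,r^{n-1}<0$ for all $r\ge1$ with $c_0$ of size $K\gamma\rho_+^{\gamma+1}/m$, while the inhomogeneity is negative and of size $m^{2}r^{-n}$. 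For the existence and uniqueness of such a solution I would simply quote \cite{h-m21}; it can also be obtained by shooting in $\psi_r(1)$, the sign of $a$ furnishing the needed monotone dependence.

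For the decay bounds \eqref{rho}--\eqref{ur} I work with $q:=\psi_r=(r^{n-1}\ut)_r$, which, once $\psi$ is known, solves the \emph{first-order} linear equation $\mu q_r=a(r,\psi)\,q+m(1-n)\psi r^{-n}$. Since $a(r,\psi)\le -c_0 r^{n-1}$ with $c_0\gg\mu(2n-1)$ for $m$ small, and the forcing is $O(m^{2}r^{-n})$, a maximum-principle comparison with the barriers $r\mapsto M r^{-2n+1}$ yields $|q(r)|\le C\,r^{-2n+1}$ on $[1,\infty)$; here $q(1)$, which enters $C$, is read off from the once-integrated form of \eqref{st2} evaluated at $r=1$. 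Integrating $q$ from $r$ to $\infty$ and using $\psi(\infty)=m/\rho_+$ gives $|\psi(r)-m/\rho_+|\le C\,r^{-2n+2}$, and then the elementary identities $\rt=m/\psi$, $\ut=\psi r^{1-n}$, $\rt_r=-m\psi_r/\psi^{2}$, $\ut_r=\psi_r r^{1-n}+(1-n)\psi r^{-n}$ convert these $\psi$-bounds into \eqref{rho}--\eqref{ur} once the powers of $\rho_+$ and $u_b$ are tracked.

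For the dichotomy the key observation is that if $r_0>1$ satisfies $\rt_r(r_0)=0$ then $\psi_r(r_0)=0$, so the scalar ODE gives $\mu\psi_{rr}(r_0)=m(1-n)\psi(r_0)r_0^{-n}<0$, whence $\rt_{rr}(r_0)=-m\psi_{rr}(r_0)/\psi(r_0)^{2}>0$. Thus every interior critical point of $\rt$ is a strict local minimum, so $\rt$ has at most one critical point and is therefore either strictly monotone on $[1,\infty)$ or strictly decreasing and then strictly increasing with a unique minimiser $r_\ast$. Since the coefficient and the forcing of the $q$-equation are both negative, $q<0$ near $r=\infty$, i.e.\ $\psi$ is decreasing and $\rt$ increasing there; hence the monotone alternative must be increasing, which forces $\rho_b=\rt(1)<\rho_+$. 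In particular, $\rho_b\ge\rho_+$ rules out monotonicity and yields case \ref{item:I}. To obtain case \ref{item:II}, and to locate $r_\ast$ in case \ref{item:I}, I would use the once-integrated momentum balance
\begin{equation*}
  \mu\,\frac{(r^{n-1}\ut)_r}{r^{n-1}}
  =\tfrac12\,\rt\,\ut^{2}+\tfrac12\int_r^{\infty}\rt_s\,\ut^{2}\,ds+P(\rt)-P(\rho_+),
\end{equation*}
obtained by integrating \eqref{st2} over $(r,\infty)$: its left-hand side has the sign of $-\rt_r(r)$, so evaluating at $r=1$ reduces the whole classification to showing that the right-hand side there has the sign of $\rho_b-\rho_+$.

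That last step is the main obstacle. After inserting the decay estimates, the three terms on the right of the integrated balance are all of comparable size, so the sign at $r=1$ is a genuine cancellation, not a single term dominating the others; moreover the integral term is expressed through $\rt_s$, hence through the unknown, and is therefore best estimated simultaneously with the decay bootstrap rather than afterwards. Carrying this out, together with extracting the precise dependence of the constants in \eqref{rho}--\eqref{ur} on $u_b$ and $|\rho_b-\rho_+|$, is where the bulk of the work lies; the remaining assertions follow mechanically from the scalar ODE above and the sign of its coefficient $a$.
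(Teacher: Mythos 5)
Your reduction to the scalar ODE for $\psi := r^{n-1}\ut = m_b\,\vt$ is essentially the same normalization as the paper's unknown $\eta = \vt - v_+$ up to the scaling $\psi = m_b(v_+ + \eta)$, and your key structural observation --- that at any interior critical point $\mu\psi_{rr} = m(1-n)\psi r^{-n} < 0$, so every interior critical point of $\rt$ is a strict local minimum --- is a cleaner rephrasing of the paper's claim \eqref{noMin} (which the paper derives somewhat more circuitously from \eqref{etae2} and \eqref{etarr} via a contradiction $v\le 0$). For the decay rates you propose a maximum-principle comparison with barriers $Mr^{-2n+1}$ for $q=\psi_r$, whereas the paper works directly with the explicit integral representation \eqref{etae}--\eqref{er-long} inherited from the fixed-point construction, together with the weighted integral estimate of Proposition~\ref{prop:H-M}. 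These are genuinely different derivations; yours would be more self-contained, the paper's has the advantage that the precise dependence of the constants on $\rho_+$, $u_b$ and $|\rho_b-\rho_+|$ (which the lemma asserts and which is essential later in Section~\ref{sec:prioriE}) falls out of Proposition~\ref{prop:H-M}, while your version would have to track it by hand through the barrier construction.

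There are, however, two concrete gaps. First, you assert that ``since the coefficient and the forcing of the $q$-equation are both negative, $q<0$ near $r=\infty$.'' This does not follow from the signs alone: a first-order linear ODE $\mu q_r = a q + f$ with $a<0$, $f<0$ admits solutions that stay positive while decaying to zero, so some quantitative input is needed (the paper proves the corresponding statement, $\eta_r<0$ and $\eta>0$ for large $r$, in Proposition~\ref{prop:etalr} via the representation formula; a self-contained alternative is to compare the super-exponential decay forced by $aq \le -c_0 r^{n-1}q$ against the algebraic lower bound $q(R) \gtrsim R^{1-n}$ that the forcing alone would give if $q$ stayed positive). This fact is not optional: even to establish case~\ref{item:I} you must rule out $\rt$ being strictly decreasing on all of $[1,\infty)$, and that is exactly what the large-$r$ sign of $q$ supplies. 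Second, and as you yourself acknowledge, your argument does not complete case~\ref{item:II}; your once-integrated momentum balance at $r=1$ involves a genuine cancellation whose sign you do not determine. The paper's route here is to show directly from \eqref{etae2} and Lemma~\ref{lemma:etaEst} that $\eta_r(1)\le 0$ when $\eta_b>0$ and the data are small, and then run the intermediate-value argument off that; your plan would have to produce the analogous sign determination at $r=1$. Until both of these are filled in, the dichotomy part of the proof is incomplete.
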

The existence of the stationary solution satisfying \eqref{u+} is first proved in \cite{h-m21}.
In Section \ref{sec:st}, we review the well-posedness theorem stated in \cite{h-m21}. Then, based on their result, we derive the improved decay rates \eqref{rho}--\eqref{ur}. Moreover, we prove that if $(\rt,\ut)$ solves \eqref{st}--\eqref{u+}, then the properties \ref{item:I}--\ref{item:II} in Lemma \ref{le:st} must hold. The two major improvements we obtained for the decay estimates \eqref{rho}--\eqref{ur} are as follows: 1. a stronger decay rate on the order of $r$ is proved to hold in contrast to the one stated in \cite{h-m21}; 2. The dependence on $\rho_{+}$ is explicitly determined. These two refinements play an essential role in the analysis of the time asymptotic stability in Section \ref{sec:prioriE}. This is one of the main results in the present paper and it is stated in the following theorem. 
\begin{theorem}
\label{mth}
Let $\sigma$ be an arbitrary constant
satisfying $0<\sigma<1$.
Suppose $\gamma>1$ and the initial data satisfying \eqref{compa} belongs to the function space,
\begin{subequations}\label{IDReg}
\begin{align}
 &r^{\frac{n-1}{2}}(\rho_0-\rt), \quad r^{\frac{n-1}{2}} (\rho_0-\rt)_r \ \in L^2(1,\infty), \qquad & \rho_0 \in \mathcal{B}^{1+\sigma}\big([1,\infty)\big), \\ 
 & r^{\frac{n-1}{2}} (u_0-\ut), \quad r^{\frac{n-1}{2}} (u_0-\ut)_r \ \in L^2(1,\infty), \qquad & u_0\in \mathcal{B}^{2+\sigma}\big([1,\infty)\big).
\end{align}
\end{subequations}
Then there exist constants $ \alpha= \alpha(\mu,\gamma,K,n)$ which is independent of $\rho_+$, $\rho_b$, $u_b$ and $\ep_0=\ep_0(\rho_+,\mu,\gamma,K,n)>0$ such that if
\begin{equation*}
    \rho_{+}\in (0,\alpha], \qquad  |\rho_b-\rho_+|\le u_b^2, \qquad u_b +\|r^{\frac{n-1}{2}}(\rho_0-\rt,u_0-\ut)\|_{H^1} \le \ep_0,
\end{equation*}
then for an arbitrary $T > 0$, there is a unique classical solution $(\rho, u)$ to the equations \eqref{nse} with initial data \eqref{compa} and boundary data \eqref{stbdry}, \eqref{u+}. Moreover,
\begin{gather*}
\nonumber
(\rho, u)\in \mathcal{B}^{1+\frac{\sigma}{2},1+\sigma}\times \mathcal{B}^{1+\frac{\sigma}{2},2+\sigma}\big([0,T]\times [1,\infty)\big),\\
(\rho-\rt, u-\ut)\in C\big([0, \infty); H^1(1,\infty)\big)
\end{gather*}
and $(\rho, u)$ converges to the stationary solution $(\rt,\ut)$ as time tends to infinity. Precisely,
\begin{align}\label{Tasymp}
\lim_{t\to +\infty}\sup_{r \ge 1}|(\rho, u)(t,r)-(\rt,\ut)(r)|=0.
\end{align}
\end{theorem}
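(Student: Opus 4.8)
The plan is to establish Theorem~\ref{mth} by the classical continuation argument: a local existence theorem is combined with uniform \emph{a priori} estimates to produce a solution on every time interval $[0,T]$ with bounds independent of $T$, and the convergence \eqref{Tasymp} is then read off from those estimates. As announced in the abstract, the energy estimates are carried out not in the Eulerian radius $r$ but in a Lagrangian mass coordinate, which is what renders the boundary contributions tractable.

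First I would introduce, for the (as yet unknown) solution, the mass coordinate $y=y(r,t)\vcentcolon=\int_1^r\rho(s,t)\,s^{n-1}\,\dif s$ together with $\tau=t$. Since $\rho>0$ and $\rho(r,t)\to\rho_+>0$ as $r\to\infty$, for each fixed $t$ this is a strictly increasing bijection of $[1,\infty)$ onto $[0,\infty)$; it sends the inflow boundary $r=1$ to the fixed boundary $y=0$, and the radial weight is absorbed since $\dif y=\rho\,r^{n-1}\,\dif r$. In these variables the continuity equation \eqref{nse1} turns into $v_\tau+\rho_bu_b v_y=u_y$ for the specific volume $v\vcentcolon=(\rho r^{n-1})^{-1}=\d_y r$, the radius $r(y,\tau)$ is recovered by integrating $r_y=v$, and the momentum equation \eqref{nse2} becomes a parabolic equation for $u$ coupled to $v$ through the pressure and through decaying geometric terms; the inflow enters only via the constant mass flux $\rho_bu_b$ and transport terms carrying the small coefficient $\rho_bu_b$. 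Writing $\phi\vcentcolon=\rho-\rt$ and $\psi\vcentcolon=u-\ut$, the boundary data become \emph{homogeneous}, $\phi(0,\tau)=\psi(0,\tau)=0$ --- here the compatibility $\rho_0(1)=\rho_b=\rt(1)$, $u_0(1)=u_b=\ut(1)$ from \eqref{compa} is used --- so that also $\phi_\tau(0,\tau)=\psi_\tau(0,\tau)=0$. Local-in-time existence and uniqueness of a classical solution in the regularity class stated in the theorem follows from standard theory for such hyperbolic--parabolic systems; it then remains to propagate the quantity
\[
 N(t)^2\vcentcolon=\sup_{0\le\tau\le t}\|(\phi,\psi)(\tau)\|_{H^1(0,\infty)}^2+\int_0^t\Big(\|\psi_y(\tau)\|_{H^1}^2+\|\phi_y(\tau)\|_{L^2}^2\Big)\dif\tau
\]
so that $N(t)\le C\,N(0)$ whenever $N(0)$, $u_b$ and $\rho_+$ are small.

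The \emph{a priori} estimates are built up in the usual order. The zeroth-order estimate uses the relative energy: multiply the momentum equation by $\psi$ and add the relative pressure potential $\mathcal{P}(v)-\mathcal{P}(\tilde v)-\mathcal{P}'(\tilde v)(v-\tilde v)$ with $\mathcal{P}$ strictly convex and $\mathcal{P}'(v)=-P(v^{-1})$ (legitimate since $\gamma>1$), and integrate over $y\ge0$; the viscous term yields the dissipation $\|\psi_y\|_{L^2}^2$, the convex relative energy controls $\|(\phi,\psi)\|_{L^2}^2$, and the terms carrying $\ut_y$, $\rt_y$ and the injected flux are absorbed using the decay estimates \eqref{rho}--\eqref{ur} of Lemma~\ref{le:st} together with the smallness $u_b\le\ep_0$, $|\rho_b-\rho_+|\le u_b^2$ and $\rho_+\le\alpha$; this is exactly where the explicit $\rho_+$-dependence recorded in Lemma~\ref{le:st} is needed. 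For the first-order estimate one multiplies the differentiated momentum equation by $\psi_y$ to control $\|\psi_y\|_{L^2}^2+\|\psi_{yy}\|_{L^2_{\tau,y}}^2$; since the continuity equation carries no dissipation, the dissipation of $\phi_y$ must be extracted from the momentum equation itself, by writing $P'(\rho)\phi_y$ in terms of $\psi_\tau$ and the viscous term and testing against $\phi_y$ (the classical density-gradient multiplier). Collecting all the estimates and choosing the parameters small yields an inequality of the form $N(t)^2\le C\,N(0)^2+C\,N(t)^3$, hence the uniform bound; the continuation argument then produces, for every $T>0$, the unique classical solution on $[0,T]$ in the stated class with $N(T)\le C\,N(0)$.

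The main obstacle is the control of the boundary terms at $y=0$ that arise from the integrations by parts in the first-order estimates --- terms of the type $[\psi_\tau\psi_y]_{y=0}$, $[\phi_y\psi_y]_{y=0}$ and products thereof --- which, because $u_b>0$, are not killed by any favourable sign of the convection. The Lagrangian frame is what tames them: the boundary is fixed, the convective flux through it is the constant $\rho_bu_b$, and the homogeneous conditions $\phi(0,\tau)=\psi(0,\tau)=\phi_\tau(0,\tau)=\psi_\tau(0,\tau)=0$ are available, so that the surviving boundary contributions are bounded by $C\,u_b$ times controlled norms and absorbed. Finally, from the uniform bound on $N(t)$ and the differential inequalities satisfied by $\tfrac{\dif}{\dif\tau}\|(\phi_y,\psi_y)(\tau)\|_{L^2}^2$ one obtains $\int_0^\infty\big|\tfrac{\dif}{\dif\tau}\|(\phi_y,\psi_y)(\tau)\|_{L^2}^2\big|\,\dif\tau<\infty$, hence $\|(\phi_y,\psi_y)(\tau)\|_{L^2}\to0$ as $\tau\to\infty$; interpolating with the uniform $H^1$ bound and using $H^1(0,\infty)\hookrightarrow L^\infty$ gives $\sup_{y\ge0}|(\phi,\psi)(\tau,y)|\to0$, and transporting back to the Eulerian variable $r$ yields \eqref{Tasymp}.
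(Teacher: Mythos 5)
Your overall strategy---pass to a Lagrangian mass coordinate, prove a relative-energy estimate plus $H^1$-level estimates with boundary terms controlled by the structure of the continuity equation, then run a continuation argument and a standard $\frac{d}{dt}\|\cdot\|^2\in L^1$ argument for the convergence \eqref{Tasymp}---is the same as the paper's, and your choice $y=\int_1^r\rho(s,t)\,s^{n-1}\,\dif s$ differs from the paper's $X=-m_b t+\int_1^r\rho\,s^{n-1}\,\dif s$ only by a shift: the paper keeps a genuine particle coordinate with a moving boundary $x=-m_b t$ (so the Leibniz rule produces the boundary terms), while yours fixes the boundary at $y=0$ and instead carries a constant drift $m_b\,\partial_y$ (so integration by parts produces the same boundary terms). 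That difference is cosmetic. However there are two genuine gaps in your sketch.

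First, your claim that the surviving boundary contributions are ``bounded by $C\,u_b$ times controlled norms and absorbed'' has the wrong sign in $u_b$. Because $\phi(0,\tau)=0$ forces, via the continuity equation, the boundary compatibility $\phi_y(0,\tau)=m_b^{-1}\psi_y(0,\tau)$ (the analogue of \eqref{Hbdry}), the boundary contribution to the $\phi_x$-dissipation estimate (Lemma \ref{lemma:Hx}) is $\tfrac{\mu^2\rho_b}{u_b}\psi_x^2(-m_bt,t)$, which is large, of order $u_b^{-1}$, for small $u_b$. It is not absorbed by smallness of $u_b$; it is handled via the weighted Sobolev trace estimate (Proposition \ref{prop:psiSobolev}) with an $\epsilon$ scaled like $u_b$, which lets one split it into a piece absorbed by the $\psi_{xx}$ dissipation and a piece $\sim u_b^{-2}\mD(t)$. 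This is exactly the origin of the factor $1+u_b^{-2}$ in Theorem \ref{thm:Lag} and of the paper's eventual closure requiring $|\rho_b-\rho_+|\le u_b^2$ and $\rho_+\le\alpha$. Relatedly, the condition $\rho_+\le\alpha$ is not generic smallness: the explicit $\rho_+$-powers in Lemma \ref{le:st} are used to show $C\,A_1(\rho_+)A_3(\rho_+)\le\omega(\rho_+)/8$ (see the end of the proof of Lemma \ref{lemma:2n-4}); since $\omega\sim\rho_+^\gamma$ while $A_1A_3$ carries higher powers thanks to $\gamma>1$, the inequality holds only when $\rho_+$ is below a threshold. Your sketch acknowledges the smallness of $\rho_+$ but not why.

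Second, your energy norm is the unweighted $H^1(0,\infty)$ norm, whereas the geometry forces weighted norms. The hypotheses supply $r^{(n-1)/2}(\rho_0-\rt,u_0-\ut)\in H^1(\dif r)$, which in the Lagrangian frame corresponds (via $\d_r=\rho\,r^{n-1}\d_x$ and $\dif r\sim\rho^{-1}r^{1-n}\dif x$) to $\|r^{n-1}(\phi_x,\psi_x)\|_{L^2(\dif x)}$, i.e.\ the weighted norm $N(t)$ of \eqref{Nt}; the dissipation likewise has to be taken in the weighted sense $\|r^{n-1}\psi_x\|$, and quantities like $\int\psi^2/r^2$ and $\int\phi^2/r^{2n}$ are controlled by the Hardy-type inequality of Proposition \ref{prop:hardy}. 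With the unweighted norm you propose, the geometric terms $\sim r^{-1}$ appearing in the equations (e.g. $(n-1)\mu v\psi/r^2$ in \eqref{psi-t}) and the ingredients of Propositions \ref{prop:hardy} and \ref{prop:psiSobolev} do not match, so the first-order estimate does not close as written. Finally, one smaller mismatch: the paper's good unknown for the gradient estimate is $\mF:=\mu\phi_x/v-\psi/r^{n-1}$ with $\phi$ the \emph{specific-volume} perturbation; it is chosen precisely so that $\mF$ satisfies the damped transport equation \eqref{HEq} with damping $\tfrac{\gamma K}{\mu}v^{-\gamma}\mF$, which is what extracts a genuine dissipation for $\phi_x$. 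Your ``classical density-gradient multiplier'' is the right idea, but without identifying the combination $\mF$ and its damping structure (and the accompanying boundary relation \eqref{Hbdry}), the $\phi_x$-estimate will not produce the needed sign.
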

\begin{remark}
In Theorem \ref{mth}, only the isentropic case $\gamma>1$ is considered. The stability of the stationary solution $(\rt,\ut)$ for isothermal case $\gamma=1$ is an open problem that we plan to investigate further in the future. Moreover, we have also imposed in Theorem \ref{mth} that the far-field reference density $\rho_{+}$ can not be too large, in the sense that it is bounded above by some given constant. For several cases of the inflow problem posed in $1$-dimensional half line, which is studied in \cite{m-n01}, this condition is not imposed. Hence we see that this difficulty is unique to the multi-dimensional spherically symmetric model. The question of whether this restriction can be relaxed is the subject of ensuing research and we believe that the solution to this problem could provide further insight towards the stability of spherically symmetric in-flowing fluid.    
\end{remark}

\paragraph{Related results.} The compressible Navier-Stokes equation is an important subject in mathematical physics. Particularly, the problems of out-flowing or in-flowing boundary have gained traction in recent decades. We state several previous results, which are relevant to the present paper.

First, for a comprehensive survey of the mathematical theory of compressible Navier-Stokes equation, we refer readers to the book \cite{kaz90} by S.~N. Antontsev, A.~V. Kazhikhov and V.~N. Monakhov. The first breakthrough research on the large time stability of solution is done by A.~Matsumura and T.~Nishida in \cite{m-n83}, where they consider the heat-conductive compressible flow for a general $3$-dimensional exterior domain with adhesion boundary condition ($u\vert_{\d\Omega}=0$ and $\d_{n}\theta\vert_{\d\Omega}=0$ where $\theta$ is the 
absolute 
temperature). In this case the stationary solution is $(\tilde{\rho},0,\bar{\theta})$ where $\tilde{\rho}=\tilde{\rho}(x)$ is a positive function of spatial variable and $\bar{\theta}$ is a positive constant. Under smallness assumptions on the initial perturbation and external forces, it is proved that $(\tilde{\rho},0,\bar{\theta})$ is stable globally in time.

When the equation is spherically symmetric with adhesion boundary condition, a pioneering work has been done by N.~Itaya \cite{itaya85}, which establishes the global-in-time existence of a unique classical solution on a bounded annulus domain. In this work, no smallness assumption on the initial data is imposed. Later, T.~Nagasawa studies the asymptotic state for the same
problem in \cite{nagasawa}. The paper \cite{itaya85} has motivated a sequence of developments on the topic of spherically symmetric solution. For instance, A.~Matsumura in \cite{matsu92} constructed a spherically symmetric classical solution to the isothermal model with external forces on a bounded annular domain. Moreover, he also shows the convergence rate to the stationary solution as time tends to infinity is exponential. Subsequently, this result has been extended to the isentropic and heat-conductive models by K.~Higuchi in \cite{higuchi92}. The well-posedness of spherically symmetric solution in an unbounded exterior domain is first obtained by S.~Jiang \cite{jiang96}, where the global-in-time existence of a uniqueness classical solution is shown. In addition, a partial result on the time asymptotic stability is proved in \cite{jiang96} where, for $n=3$, $\|u(\cdot,t)\|_{L^{2j}}\to 0$ as $t\to\infty$ with any fixed integer $j\ge 2$. Later, this restriction on the long time stability was fully resolved by T.~Nakamura and S.~Nishibata in \cite{n-n08}, where a complete stability theorem was obtained for both $n=2$ and $3$ with large initial data. We also refer to the paper \cite{NNY} by T.~Nakamura, S.~Nishibata and S.~Yanagi, where the time asymptotic stability of the spherically symmetric solution for the isentropic flow is established with large initial data and external forces.

For the general outflow or inflow problem, the stationary solution becomes non-trivial. This leads to a variety of physically interesting time asymptotic behaviours for the solutions. A.~Matsumura in \cite{m2001} starts the first investigation of this problem for the isentropic model posed on the one dimensional half-space domain. Several kinds of boundary conditions were considered in \cite{m2001}, which includes inflow, outflow and adhesion boundary conditions. He formulated
conjectures on the classification of asymptotic behaviours of the solutions in different cases subject to the relation between the boundary data and the spatial asymptotic data. Then the stability theorems for some cases of inflow problem were established by A.~Matsumura and K. Nishihara in \cite{m-n01} where they employed the Lagrangian mass coordinate. Following this work, S.~Kawashima, S.~Nishibata and P.~Zhu in \cite{k-n-z03} further examine the outflow problem in one dimensional half space. They prove the long time stability of solutions with small initial perturbation with respect to the stationary solution. A detailed desicription of the convergence rate towards the stationary
solution is found in the work of T.~Nakamura, S.~Nishibata and T.~Yuge \cite{NNY}.

For the non-isentropic inflow problem in the half line, T.~Nakamura and S.~Nishibata in \cite{NN} demonstrate the time asymptotic stability of stationary solutions under a small initial perturbation, for both the subsonic and transonic cases. 

The research on the outflow and inflow problems for the spherically symmetric solution in an unbounded exterior domain has been relatively new subject. Under the assumption that velocity at boundary, $|u_b|$ is sufficiently small, I.~Hashimoto and A.~Matsumura in \cite{h-m21} employ the iteration method to obtain the existence of a unique spherically symmetric stationary solutions for both inflow and outflow problems in an exterior domain. More recently, I.~Hashimoto, S.~Nishibata, S.~Sugizaki in \cite{h-n-s23} show the stability of spherically symmetric stationary solution for outflow problem under small initial perturbation. Subsequently, Y. Huang and S. Nishibata in \cite{h-n24} considers the large initial perturbation for the outflow problem and they prove that the stationary solution remains stable in large time without any smallness assumption on the initial data.  

\paragraph{Outline of the paper.} In Section \ref{ssec:stExist}, we review the well-posedness of stationary solution $(\rt,\ut)$, which is originally proved in \cite{h-m21}. Based on their result, we derive in Section \ref{ssec:stRate}, the improved decay estimate Lemma \ref{lemma:etaEst}. This is needed for the proof of long time stability of $(\rt,\ut)$. Moreover, based on the boundary data $\rho_b$, we classify in Section \ref{ssec:extremum}, the extremum points of $\rt$. In Section \ref{sec:prioriE}, we derive the a-priori estimates of $(\rho-\rt,u-\ut)$, from which the long time stability of $(\rt,\ut)$ is implied. To do so, we reformulate the original equations \eqref{nse} into Lagrangian coordinates in Section \ref{ssec:reform}. Subsequently, using the energy method, we obtain the $L^{\infty}\big(0,T;H^1\big)$ a-priori estimates for $(\rho-\rt,u-\ut)$ in Sections \ref{ssec:REE}--\ref{ssec:H1psi}. The main reason for this coordinate transformation is that one encounters the boundary term $(u-\ut)_r^2\vert_{r=1}$, which poses difficulty in the Eulerian formulation. However this issue is resolved by utilising the distinctive structure of equations \eqref{nsl-hat} in Lagrangian formulation.

\paragraph{\bf Notations.} 
We list the notations which are frequently used in the present paper.
\begin{enumerate}[label=\textnormal{(\arabic*)},ref=\textnormal{(\arabic*)}]
\item\label{item:cons} $C=C(\mu,\gamma,K,n)$ denotes a positive constant, which depends only on the parameters $\mu$, $\gamma$, $K$ and $n$. Moreover, we also use the notation $C(\rho_+)$ to denote positive constants that depends on $\rho_+$, in addition to $\mu$, $\gamma$, $K$ and $n$. We also define
\begin{align*}
    A\sim B \qquad \text{if and only if} \qquad C^{-1}B \le A \le C B.  
\end{align*}
\item For a non-negative integer $k \ge 0$, $H^k(\Omega)$ denotes the $k$-th order Sobolev space over $\Omega$ in the $L^2$ sense with the norm
\begin{equation*}
\| f \|_k\vcentcolon= \|f\|_{H^{k}(\Omega)} = \bigg( \sum_{|\alpha|=0}^{k} \int_{\Omega}\!\!  |\d_x^{\alpha} f (x)|^2 \, \dif x \bigg)^{\frac{1}{2}}.
\end{equation*}
We note also that $H^0 = L^2$ and denote $\lpp{\cdot}  \vcentcolon= \| \cdot \|_0$.
\item For a non-negative integer $k\ge 0$, $\mB^k(\Omega)$ denotes the space of all functions $f$ which, together with all their partial derivatives $\d_x^{i}f$ of orders $|i|\le k$, are continuous and bounded on $\Omega$. It is endowed with the norm
\begin{equation*}
\|u \|_{\mB^k(\Omega)}
\vcentcolon= \sum_{i=0}^k \sup_{x \in \Omega} |\d_x^i u(x)|.
\end{equation*}
Moreover, for $\alpha \in (0,1)$, $\mB^\alpha (\Omega)$ denotes the
space of bounded functions over $\Omega$ which have the
uniform H\"older continuity with exponent $\alpha$.
For an integer $k\ge 0$,  $\mB^{k+\alpha}(\Omega)$ denotes the space of the functions satisfying %
$\d_x^{i} u \in \mB^\alpha (\Omega)$ for all integer $i \in [0, k]$. It is endowed with the norm
$|\cdot|_{k+\alpha}$ is its norm defined by
\begin{equation*}
 \|u\|_{\mB^{k+\alpha}(\Omega)}
\vcentcolon= \sum_{i=0}^k \sup_{x \in \Omega} |\d_x^i u(x)|
+ \sup_{\begin{smallmatrix} x,x' \in \Omega \\ x \neq x' \end{smallmatrix}}
  \frac{|\d_x^k u(x) - \d_x^k u(x')|}{|x - x'|^\alpha}.
\end{equation*}
\item Denote $Q_T \vcentcolon= [0,\infty) \times [0, T]$. $\mB^{\alpha, \beta} (Q_T)$ denotes the space of the uniform H\"{o}lder continuous functions with the H\"{o}lder exponents $\alpha$ and $\beta$ with respect to $x$ and $t$, respectively. For integers $k$ and $l$, $\mB^{k+\alpha, l+\beta}(Q_T)$ denotes the space of the functions satisfying $\d_x^i u, \d_t^j u \in \mB^{\alpha, \beta}(Q_T)$ for all integers $i \in [0,k]$ and $j \in [0,l]$.
\end{enumerate}

\section{Stationary solution}\label{sec:st}
\subsection{Well-posedness of the stationary solution.}\label{ssec:stExist}
In this subsection, we discuss the existence and uniqueness of the stationary solution $(\rt,\ut)$  to (\ref{st}). Set $\vt\vcentcolon=1/\rt$ to be the specific volume, $v_{b}\vcentcolon= 1/\rho_b$ and $v_{+}= 1/\rho_{+}$. Owing to the far-field condition $v\to v_{+}$ as $r\to \infty$, we define the unknown $\eta \vcentcolon= \vt - v_+$. Then substituting \eqref{stu} into \eqref{st2} yields the ordinary differential equation
\begin{subequations}\label{eta0}
\begin{equation}
    m_b\mu \left(\frac{ \eta_r }{r^{n-1}}\right)_r=
p\big(v_+ + \eta \big)_r
+\frac{ m_b^2 v_+}{2}\Big(\frac{1}{r^{2(n-1)}}\Big)_r
+\frac{ m_b^2}{r^{n-1}}\left(\frac{\eta}{r^{n-1}}\right)_r,\label{ODE0}
\end{equation}
where $m_b\vcentcolon= \rho_b u_b$ and $p(v):=P\left(v^{-1}\right)$. Here $\eta$ satisfies the boundary and far-field conditions
\begin{equation}
\eta(1)=\eta_b\equiv v_b-v_+ \ \text{ and } \ \eta(r) \to 0 \quad \text{as} \quad r \to \infty. \label{far}
\end{equation}
\end{subequations}
Using \eqref{eta0}, we obtain a representation formula for $\eta$, which is suited for the fixed-point theorem for the contraction mapping. First, integrating \eqref{ODE0} by parts and using the far-field condition \eqref{far}, we have
\begin{align}
\eta_r(r)= &
\frac{r^{n-1}}{m_b\mu}\big\{ p(v_++\eta(r))- p(v_+)\big\}
 +\frac{m_b v_+}{2\mu}\frac{1}{r^{n-1}}\nonumber\\
&+\frac{m_b\eta(r)}{\mu r^{n-1}}
-\frac{m_b(n-1)r^{n-1}}{\mu}\int_r^{\infty}\frac{\eta(s)}{s^{2n-1}}ds.\label{reform}
\end{align}
Let $\eta_b\vcentcolon= v_b-v_+$. Subtracting $\frac{p^{\prime}(v_+)}{\mu m_{b}} r^{n-1} \eta$ on both sides of \eqref{reform}, we have
\begin{equation}
\label{etai}
 \eta_{r}(r)-\displaystyle \frac{ p^{\prime}(v_+)}{\mu m_{b}} r^{n-1} \eta(r)=F[\eta](r), 
\end{equation}
where
\begin{align*}
&F[\eta](r):= \dfrac{m_b v_{+}}{2\mu} \dfrac{1}{r^{n-1}} + \dfrac{m_b}{\mu}\dfrac{\eta(r)}{r^{n-1}} - \dfrac{(n-1)m_b r^{n-1}}{\mu} \int_{r}^{\infty}\!\!\dfrac{\eta(s)}{s^{2n-1}}\,ds + \dfrac{r^{n-1}}{\mu m_b} N\big(\eta(r)\big),\\
& N(\eta(r)) :=p(v_++\eta(r))-p(v_+)-p'(v_+)\eta(r).
\end{align*}
Solving \eqref{etai}, we obtain the representation formula
\begin{gather}
\label{etae}
\eta(r)=\eta_b e^{\frac{\kappa}{m_b}(1-r^n)} + \int_{1}^{r} e^{\frac{\kappa}{m_b}(s^n-r^n)} F[\eta](s)\, {d} s\quad \text{where } \ \kappa:=\frac{-p'(v_+)}{n\mu}>0,\\
\label{etae2}
\eta_r(r) = - \frac{n\kappa }{m_b} r^{n-1} \eta(r) + F[\eta](r). 
\end{gather}
I. Hashimoto and A. Matsumura in \cite{h-m21} prove the existence of a unique classical solution to \eqref{eta0}, which is stated in the following lemma.
\begin{lemma}[I. Hashimoto and A. Matsumura, \cite{h-m21}] \label{lem:H-M21}
    Let $n\ge 2$. For an arbitrary $\rho_{+}>0$, there exist constants $\ep>0$ and $C(\rho_+)>0$ which depend on $\rho_{+}$, $\mu$, $K$, $\gamma$ and $n$, such that if $u_b+|\eta_b|<\ep$, then there exists a unique classical solution $\eta\in \mathcal{B}^2[1,\infty)$ satisfying
    \begin{equation}\label{h-m-rate}
        r^{n-1}|\eta(r)| \le C(\rho_{+})  \big\{ u_b^2 + |\eta_b| \big\} \qquad \text{ for } \ r\ge 1. 
    \end{equation}
\end{lemma}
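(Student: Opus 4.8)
My plan is to recast the boundary-value problem \eqref{eta0} as the fixed-point equation \eqref{etae} and to solve it by the contraction mapping principle in a weighted space that already carries the expected spatial decay $r^{1-n}$, afterwards recovering $C^2$-regularity from the integral representation. Concretely, I would work in the Banach space $X \vcentcolon= \{ \eta \in C[1,\infty) : \|\eta\|_X \vcentcolon= \sup_{r\ge1} r^{n-1}|\eta(r)| < \infty \}$ and define $\mathcal{T}\colon X \to X$ by letting $(\mathcal{T}\eta)(r)$ be the right-hand side of \eqref{etae}. A fixed point $\eta=\mathcal{T}\eta$ is then a bounded solution of \eqref{etai} with $\eta(1)=\eta_b$; differentiating through \eqref{etae2} and the explicit expression of $F[\eta]$ promotes it to a $C^2[1,\infty)$ solution of \eqref{ODE0} satisfying \eqref{far}. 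Conversely, any classical solution of \eqref{eta0} is bounded — it is continuous on $[1,\infty)$ with a limit at infinity — and, after integrating \eqref{ODE0} twice and using \eqref{far}, it satisfies \eqref{etae}; hence it lies in $X$, so uniqueness in the full class $C^2[1,\infty)$ follows from uniqueness of the fixed point.

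The computational core is a kernel estimate. Writing $\lambda \vcentcolon= \kappa/m_b$, I would first prove that, for any fixed exponent $a$ and any $\lambda$ bounded below by a fixed positive constant,
\[
 \int_1^r e^{\lambda(s^n - r^n)}\, s^a\, \dif s \;\le\; \frac{C}{\lambda}\, r^{a-n+1} \quad (r\ge1), \qquad \sup_{r\ge1} r^{n-1} e^{\lambda(1-r^n)} \le 1 .
\]
The first bound comes from the identity $s^a e^{\lambda s^n} = (n\lambda)^{-1} s^{a-n+1}\tfrac{\dif}{\dif s} e^{\lambda s^n}$, an integration by parts, and iteration until the remaining exponent is small; the second is elementary calculus. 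Since $\kappa = -p'(v_+)/(n\mu)>0$ is fixed once $\rho_+$ is fixed while $m_b=\rho_b u_b$ is controlled by $u_b$, the smallness hypothesis $u_b+|\eta_b|<\ep$ makes $\lambda$ as large as needed, so these bounds are available.

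With this in hand I would run the standard two-part argument. For $\eta\in X$ with $\|\eta\|_X\le M$, I bound the four terms of $F[\eta]$ pointwise — using $|\eta(s)|\le M s^{1-n}$, the tail estimate $\int_r^\infty |\eta(s)|\,s^{1-2n}\dif s\le C M r^{3-3n}$, and $|N(\eta)|\le C|\eta|^2$ for $M$ small — and then integrate against the kernel, so each contribution picks up a compensating factor $m_b/\kappa$; together with the boundary term $|\eta_b|\, r^{n-1} e^{\lambda(1-r^n)}\le|\eta_b|$ this yields $\|\mathcal{T}\eta\|_X \le C_\ast(\rho_+)\,(|\eta_b| + u_b^2 + M^2)$. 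Choosing $M \vcentcolon= 2C_\ast(\rho_+)(|\eta_b|+u_b^2)$ and then shrinking $\ep$ makes $\mathcal{T}$ map the closed ball $\overline{B_M}\subset X$ into itself; the analogous estimate for $\mathcal{T}\eta_1-\mathcal{T}\eta_2$ — exploiting that $F$ is affine in $\eta$ apart from the $N$-term and the Lipschitz bound $|N(\eta_1)-N(\eta_2)|\le C(|\eta_1|+|\eta_2|)|\eta_1-\eta_2|$ — gives a contraction constant $\le C_\ast(\rho_+)(M + u_b^2)<1$ for $\ep$ small. Banach's theorem then delivers a unique fixed point $\eta\in\overline{B_M}$, which by the first paragraph is the unique $C^2$-solution of \eqref{eta0}, and the bound $\|\eta\|_X\le M=C(\rho_+)(u_b^2+|\eta_b|)$ is precisely \eqref{h-m-rate}.

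The step I expect to be most delicate is controlling the degeneracy as $u_b\to0$: the representation $F[\eta]$ carries a factor $1/m_b$ in front of the nonlinearity $N(\eta)$, and the exponential rate in \eqref{etae} blows up, $\lambda=\kappa/m_b\to\infty$, so the fixed-point map is singular precisely in the regime of interest. The resolution is that the kernel estimate above gains exactly one factor $m_b/\kappa$, cancelling the offending $1/m_b$; making this rigorous forces the integration-by-parts bound to be uniform as $\lambda\to\infty$, i.e. one has to verify that no loss is hidden in the concentration of the weight $e^{\lambda(s^n-r^n)}$ near $s=r$. All constants in this lemma are allowed to depend on $\rho_+$ (through $\kappa$ and $v_+=1/\rho_+$), so no bookkeeping of that dependence is needed here; disentangling the $\rho_+$-uniform rates required for the stability analysis is a separate matter, carried out in Section~\ref{ssec:stRate}.
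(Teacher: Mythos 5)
Your proposal follows exactly the contraction-mapping scheme that the paper attributes to Hashimoto and Matsumura \cite{h-m21} and briefly sketches after the lemma: the Banach space $X$ with norm $\|\eta\|_X=\sup_{r\ge1}r^{n-1}|\eta(r)|$, the solution map $\mathcal{T}$ built from \eqref{etae}, and the kernel gain of order $m_b/\kappa$ that the paper packages as Proposition~\ref{prop:H-M}. One caveat is worth recording: the contraction you obtain lives only on the small ball $\overline{B_M}\subset X$, so the stated ``uniqueness in the full class $C^2[1,\infty)$'' does not follow from the fixed-point argument as written — a bounded classical solution with $\eta\to0$ need not a priori satisfy $\sup_r r^{n-1}|\eta|<\infty$, and even if it does it could in principle be a large fixed point outside $\overline{B_M}$; but since Lemma~\ref{lem:H-M21} asserts uniqueness only within the decay class \eqref{h-m-rate}, i.e.\ within $\overline{B_M}$, this overreach is harmless for the result being proved.
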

In \cite{h-m21}, they consider the Banach space $X$, with its norm defined by
\begin{equation*}
    X \vcentcolon= \big\{ f\in C[1,\infty) \, \big\vert \,  \|f\|_{X} <\infty \big\} \qquad \text{where} \qquad \|f\|_{X} \vcentcolon= \sup\limits_{r\ge 1} r^{n-1}|\eta(r)|.
\end{equation*}
Then they define the map $\mathcal{T} \vcentcolon X \mapsto X$ by
\begin{align*}
    \mathcal{T} \vcentcolon \quad \eta \quad  \mapsto \quad \eta_b e^{\frac{-\kappa(r^n-1)}{m_b}} + \int_{1}^{r} e^{\frac{-\kappa(r^n-s^n)}{m_b}} F[\eta](s) {d} s.   
\end{align*}
It is shown in \cite{h-m21,h-n-s23} that there exist $\ep>0$ and $M>0$ such that if $u_b + |\eta_b| \le \ep$, then $\mathcal{T}$ is a contraction mapping in the subspace $S_{M}\vcentcolon= \{ f\in X \, \vert\, \|f\|_{X} \le M \}$. In particular, repeating the same argument presented in \cite{h-m21}, we obtain a small constant $\ep>0$ such that if $u_b+ |\eta_b|<\ep$, then the corresponding solution $\eta$ satisfies
\begin{equation}\label{E0}
    |r^{n-1}\eta(r)| \le \frac{v_+}{4} \quad \text{and} \quad \frac{3}{4} v_{+}  \le v(r) \le \frac{5}{4}v_{+} \qquad \text{for } \ r\ge 1.
\end{equation}

The decay rate given in \eqref{h-m-rate} is not sufficient to show the time asymptotic stability of $(\rt,\ut)$. Hence, one of the main aims of the present paper is to derive the improved decay rate for $(\rt,\ut)$, which is stated in Lemma \ref{le:st}. This will allow us to prove the a-priori estimate in Theorem \ref{thm:Lag}. As a result, the large time stability of $(\rt,\ut)$ is shown. 

\subsection{Decay rate for the stationary solution}\label{ssec:stRate}
In this subsection, we show the improved decay rates \eqref{rho}--\eqref{ur}. The proposition below will be useful and its proof is found in \cite{h-m21}. 

\begin{proposition}[I.~Hashimoto, A.~Matsumura \cite{h-m21}] \label{prop:H-M}
There exist positive constants $\ep=\ep(\rho_+,\mu,\gamma,K,n)$ and $C=C(\mu,\gamma,K,n)$ which are independent of $u_b$ and $\eta_b$, such that if $|\eta_b|+u_b \le \ep$ and $\sup_{r\ge 1}| r^{l} f(r)|< \infty$ for $r\ge 1$ and $\ell =1,\dotsc, 3n-3$, then
    \begin{equation*}
        r^{\ell} \bigg| \int_{1}^r e^{\frac{-\kappa(r^n-s^n)}{m_b}} f(s) ds \bigg| \le C \rho_{+}^{-\gamma} u_b \sup\limits_{r\ge 1} \big| r^{\ell} f(r) \big|.
    \end{equation*}
\end{proposition}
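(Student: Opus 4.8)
The plan is to prove Proposition \ref{prop:H-M} by multiplying the integral by $r^{\ell}$, bounding $f$ using the hypothesis $\sup_{s\ge 1}|s^{\ell}f(s)| =: M_{\ell}<\infty$, and then estimating the resulting weighted convolution against the exponential kernel $e^{-\kappa(r^n-s^n)/m_b}$. Concretely, I would write
\begin{equation*}
 r^{\ell}\bigg|\int_1^r e^{\frac{-\kappa(r^n-s^n)}{m_b}}f(s)\,ds\bigg|
 \le M_{\ell}\, r^{\ell}\int_1^r e^{\frac{-\kappa(r^n-s^n)}{m_b}} s^{-\ell}\,ds,
\end{equation*}
so that everything reduces to a scalar estimate on
$I_{\ell}(r):=r^{\ell}\int_1^r e^{-\frac{\kappa}{m_b}(r^n-s^n)}s^{-\ell}\,ds$, uniformly in $r\ge 1$ and for each $\ell=1,\dots,3n-3$. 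The target is $I_{\ell}(r)\le C\rho_+^{-\gamma}u_b$; note $m_b=\rho_b u_b$ and, since $|\eta_b|+u_b\le\ep$, one has $\rho_b\sim\rho_+$, so $m_b\sim\rho_+ u_b$, and $\kappa=-p'(v_+)/(n\mu)\sim\rho_+^{\gamma+1}$ because $p(v)=Kv^{-\gamma}$ gives $p'(v_+)=-\gamma K v_+^{-\gamma-1}=-\gamma K\rho_+^{\gamma+1}$. Hence $m_b/\kappa\sim\rho_+^{-\gamma}u_b$, which is exactly the order appearing on the right-hand side; the whole point of the estimate is that the kernel concentrates near $s=r$ at scale governed by $m_b/\kappa$.

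For the scalar bound I would change variables to exploit the near-$s=r$ concentration. Substituting $\tau=s^n$ (so $d\tau=n s^{n-1}ds$, i.e. $ds=\frac{1}{n}\tau^{\frac{1}{n}-1}d\tau$) turns the kernel into $e^{-\frac{\kappa}{m_b}(r^n-\tau)}$, a clean exponential in $\tau$, and the weight $s^{-\ell}ds$ becomes $\frac{1}{n}\tau^{-\frac{\ell}{n}+\frac{1}{n}-1}d\tau=\frac{1}{n}\tau^{\frac{1-\ell}{n}-1}d\tau$. Since $\ell\ge 1$, the exponent $\frac{1-\ell}{n}-1\le -1<0$, so on $1\le\tau\le r^n$ we have $\tau^{\frac{1-\ell}{n}-1}\le$ (its value at the relevant endpoint); splitting at $\tau=r^n/2$, on the far piece $\tau\in[r^n/2,r^n]$ the weight is comparable to $r^{1-\ell-n}$ up to constants and the exponential integrates to at most $m_b/\kappa$, while on the near piece $\tau\in[1,r^n/2]$ the exponential is bounded by $e^{-\frac{\kappa}{m_b}r^n/2}$ which, multiplied by $r^{\ell}$ and the polynomially-bounded $\tau$-integral, is uniformly small — indeed $r^{\ell}e^{-cr^n/(\ep)}$ stays bounded for $r\ge 1$ after absorbing constants, and one gains the factor $m_b/\kappa\sim\rho_+^{-\gamma}u_b$ from the smallness of $\ep$ or directly from estimating $e^{-\frac{\kappa}{2m_b}r^n}\le C m_b/\kappa$ for $r\ge 1$. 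Collecting the two pieces gives $I_{\ell}(r)\le C\, m_b/\kappa\le C\rho_+^{-\gamma}u_b$ with $C=C(\mu,\gamma,K,n)$, using $\rho_b\sim\rho_+$ to convert $m_b=\rho_b u_b$ into $\rho_+ u_b$ and $\kappa\sim\rho_+^{\gamma+1}$.

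The delicate point — and the main obstacle — is tracking the $\rho_+$-dependence cleanly so that the final constant $C$ is genuinely independent of $\rho_+$, $u_b$, $\eta_b$. This forces one to be careful that every auxiliary inequality (the comparability $\rho_b\sim\rho_+$, the value of $\kappa$, the bound $e^{-\frac{\kappa}{2m_b}r^n}\le Cm_b/\kappa$) carries only $(\mu,\gamma,K,n)$-constants, and in particular that $\ep$ is allowed to depend on $\rho_+$ (it is) but $C$ is not; the restriction $\ell\le 3n-3$ is what guarantees the weight exponent stays in the range where the far-piece contribution is $O(m_b/\kappa)$ rather than producing an extra growing power of $r$. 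Since Proposition \ref{prop:H-M} is quoted from \cite{h-m21}, I would simply present this argument in the streamlined form above and refer to \cite{h-m21} for the original derivation, emphasising only the bookkeeping of the $\rho_+$-dependence which is the new ingredient needed later.
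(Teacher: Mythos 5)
The paper does not contain its own proof of Proposition \ref{prop:H-M}; it is attributed to \cite{h-m21}, so there is nothing in the text to compare against and your argument must be judged on its own.

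Your proof is sound and hits the right scales. The reduction to $I_\ell(r)=r^\ell\int_1^r e^{-\frac{\kappa}{m_b}(r^n-s^n)}s^{-\ell}\,ds$, the substitution $\tau=s^n$ turning the kernel into a one-sided exponential, and the split at $\tau=r^n/2$ (for $r^n\ge 2$; for smaller $r$ there is no near piece) all work. On the far piece the weight $\tau^{\frac{1-\ell-n}{n}}\le 2^{(n+\ell-1)/n}r^{1-\ell-n}$ combines with $r^\ell$ to give $r^{1-n}\le1$, and the exponential integrates to $m_b/\kappa$. On the near piece the kernel is $\le e^{-\frac{\kappa}{2m_b}r^n}$; since $\kappa/m_b\sim\rho_+^{\gamma}/u_b$ is as large as you wish under $u_b\le\ep(\rho_+)$, the product $r^\ell e^{-\frac{\kappa}{2m_b}r^n}$ is decreasing on $r\ge1$ and its value at $r=1$, $e^{-\frac{\kappa}{2m_b}}\le 2m_b/\kappa$, gives the claimed factor. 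The bookkeeping of $\rho_+$ is also correct: $\rho_b\sim\rho_+$ with absolute constants once $\ep\le\tfrac{1}{2\rho_+}$, $\kappa\sim\rho_+^{\gamma+1}$, hence $m_b/\kappa\sim\rho_+^{-\gamma}u_b$ with $C=C(\mu,\gamma,K,n)$.

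Two small points worth tightening. First, your remark on the role of $\ell\le 3n-3$ misstates the issue: on the far piece the combination $r^\ell\cdot r^{1-\ell-n}=r^{1-n}$ is $\le1$ for every $\ell\ge1$, so there is never a ``growing power of $r$.'' What the bound on $\ell$ actually secures is that the split constant $2^{(n+\ell-1)/n}$ and the threshold forcing $r\mapsto r^\ell e^{-\frac{\kappa}{2m_b}r^n}$ to be monotone decreasing on $[1,\infty)$ are uniform in $\ell$, so that a single pair $(\ep,C)$ serves all $\ell$ at once. Second, the borderline case $\ell=1$ produces the exponent $\tau^{-1}$ and hence a $\log(r^n/2)$ on the near piece; this is still absorbed by the exponential, but it deserves an explicit line since it is the one value of $\ell$ where the near-piece $\tau$-integral is not uniformly bounded.
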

Using Lemma \ref{lem:H-M21} and Proposition \ref{prop:H-M}, we show that the stationary solution satisfies the following decay rate
\begin{lemma}\label{lemma:etaEst}
    Let $\eta$ be the solution obtained in Section \ref{ssec:stExist}. Then there exist positive constants $\ep=\ep(\rho_+,\mu,\gamma,K,n)$ and $C=C(\mu,\gamma,K,n)$ which are independent of $u_b$ and $\eta_b$, such that if $|\eta_b|+ u_b\le \ep$, then 
    the stationary solution satisfies
    \begin{gather}
        \sup\limits_{r\ge 1}\big|r^{2(n-1)}\eta(r)\big|\le C\big\{ |\eta_b| + \rho_{+}^{-\gamma} u_b^2  \big\}, \qquad 
        \sup\limits_{r\ge 1}\big|r^{2n-1}\eta_r(r)\big| 
        \le C \Big\{ u_b + \rho_{+}^{\gamma} \frac{|\eta_b|}{u_b} \Big\}. \label{etaEst1}
    \end{gather}
\end{lemma}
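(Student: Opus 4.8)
The plan is to bootstrap from the weaker decay rate in Lemma~\ref{lem:H-M21}, namely $r^{n-1}|\eta(r)|\le C(\rho_+)\{u_b^2+|\eta_b|\}$, to the improved rate $r^{2(n-1)}|\eta(r)|\le C\{|\eta_b|+\rho_+^{-\gamma}u_b^2\}$ by re-inserting the known decay into the representation formula \eqref{etae} and estimating each of the four terms in $F[\eta]$ using Proposition~\ref{prop:H-M}. First I would bound the boundary term $\eta_b e^{\frac{\kappa}{m_b}(1-r^n)}$: since $m_b=\rho_b u_b$ and $\kappa\sim\rho_+^{\gamma+1}$ (as $p'(v_+)=-K\gamma v_+^{-\gamma-1}=-K\gamma\rho_+^{\gamma+1}$), the exponential $e^{-\frac{\kappa}{m_b}(r^n-1)}$ decays like any negative power of $r$, so $r^{2(n-1)}|\eta_b|e^{\frac{\kappa}{m_b}(1-r^n)}\le C|\eta_b|$ after absorbing $\sup_{r\ge1}r^{2(n-1)}e^{-c r^n}\le C$ — this contributes the $|\eta_b|$ piece. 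Then for the integral term $\int_1^r e^{\frac{\kappa}{m_b}(s^n-r^n)}F[\eta](s)\,ds$, I would apply Proposition~\ref{prop:H-M} with $\ell=2(n-1)=2n-2$ (which is $\le 3n-3$ since $n\ge1$), which converts the weighted sup of the integral into $C\rho_+^{-\gamma}u_b\sup_{r\ge1}|r^{2n-2}F[\eta](r)|$.

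The bulk of the work is estimating $\sup_{r\ge1}|r^{2n-2}F[\eta](r)|$ term by term, using the input bound $r^{n-1}|\eta|\le C(\rho_+)\{u_b^2+|\eta_b|\}$. The leading term $\frac{m_b v_+}{2\mu}r^{1-n}$ gives $r^{2n-2}\cdot\frac{m_b v_+}{2\mu}r^{1-n}=\frac{m_b v_+}{2\mu}r^{n-1}$, which does \emph{not} decay — so one cannot naively take the sup. The resolution, which I expect to be the main subtlety, is that the combination of the leading term with the other linear pieces must be handled more carefully: one should not estimate $F[\eta]$ crudely but rather recognise that $\eta_r(r)=-\frac{n\kappa}{m_b}r^{n-1}\eta(r)+F[\eta](r)$ via \eqref{etae2}, and that the genuinely non-decaying part of $F$ is cancelled in the fixed-point structure, or alternatively iterate once more so that the $r^{1-n}$ term, after integration against the exponential kernel (which again gains a factor $\rho_+^{-\gamma}u_b$ and converts $r^{1-n}$ weights appropriately), produces the claimed $\rho_+^{-\gamma}u_b^2$ bound — note $m_b v_+/\mu\sim\rho_b u_b\rho_+^{-1}\sim u_b$ up to constants, and one more application of Proposition~\ref{prop:H-M} with $\ell=n-1$ yields an extra $\rho_+^{-\gamma}u_b$, giving the product $\rho_+^{-\gamma}u_b^2$. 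The remaining terms are lower order: $\frac{m_b}{\mu}\frac{\eta}{r^{n-1}}$ contributes $\frac{m_b}{\mu}r^{n-1}|\eta|\le C\rho_+^{-1}u_b\cdot C(\rho_+)\{u_b^2+|\eta_b|\}$ which is higher order in $u_b,\eta_b$ and hence absorbable; the nonlocal term $\frac{(n-1)m_b r^{n-1}}{\mu}\int_r^\infty\frac{\eta(s)}{s^{2n-1}}ds$ is bounded, using $|\eta(s)|\le C(\rho_+)\{u_b^2+|\eta_b|\}s^{1-n}$, by $C\rho_+^{-1}u_b\cdot C(\rho_+)\{u_b^2+|\eta_b|\}r^{n-1}\int_r^\infty s^{-3n+2}\,ds\sim r^{n-1}\cdot r^{-3n+3}=r^{-2n+2}$ times the small factor, which is fine; and the quadratic term $\frac{r^{n-1}}{\mu m_b}N(\eta)$ with $|N(\eta)|\le C\sup|p''|\,\eta^2\le C\rho_+^{\gamma+2}\eta^2$ (on the range $\frac34 v_+\le v\le\frac54 v_+$ from \eqref{E0}) gives $r^{2n-2}\cdot\frac{r^{n-1}}{\mu m_b}\cdot\rho_+^{\gamma+2}\cdot C(\rho_+)^2\{u_b^2+|\eta_b|\}^2 r^{-2n+2}$, which after the factor $m_b^{-1}\sim(\rho_+ u_b)^{-1}$ is again higher order, hence absorbable into the left-hand side provided $\ep$ is small.

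For the derivative estimate $r^{2n-1}|\eta_r(r)|\le C\{u_b+\rho_+^\gamma|\eta_b|/u_b\}$, I would differentiate using \eqref{etae2}: $\eta_r(r)=-\frac{n\kappa}{m_b}r^{n-1}\eta(r)+F[\eta](r)$. For the first term, multiply by $r^{2n-1}$ and use the freshly-obtained bound $r^{2n-2}|\eta|\le C\{|\eta_b|+\rho_+^{-\gamma}u_b^2\}$ together with $\kappa/m_b\sim\rho_+^{\gamma+1}/(\rho_+ u_b)=\rho_+^\gamma/u_b$: this gives $r^{2n-1}\cdot\frac{\kappa}{m_b}r^{n-1}|\eta|\le r^{2n-1}\cdot\frac{\rho_+^\gamma}{u_b}\cdot r^{-2n+2}\cdot C\{|\eta_b|+\rho_+^{-\gamma}u_b^2\}=C r\{\rho_+^\gamma|\eta_b|/u_b + u_b\}$ — but this carries a spurious factor $r$, so one must be more precise and use the improved $r^{2(n-1)}|\eta|$ bound only after noting the exponential decay in $\eta$ itself kills the extra power, or equivalently estimate $r^{2n-1}|\eta_r|$ directly from $r^{2n-2}|\eta|$ via the relation $\frac{d}{dr}(r^{n-1}\eta)$ and the ODE. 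Concretely, I expect to rewrite $r^{2n-1}\eta_r = -n\kappa\,\rho_b^{-1}u_b^{-1}\cdot r^{3n-2}\eta + r^{2n-1}F[\eta]$ is wrong-signed in powers, so instead the honest route is: $\eta_r$ inherits the decay rate $r^{-(2n-1)}$ because each term of $F[\eta]$, when multiplied by $r^{2n-1}$, is bounded — the leading $\frac{m_b v_+}{2\mu}r^{1-n}$ gives $\frac{m_b v_+}{2\mu}r^n$... which again does not obviously decay, confirming that \textbf{the core obstacle throughout is the slowly-decaying leading term} $\tfrac{m_b v_+}{2\mu}r^{1-n}$ in $F[\eta]$, and the correct device is to observe that in \eqref{reform} this term is exactly $\tfrac{m_b v_+}{2\mu}r^{1-n}$, whose primitive against the kernel $e^{\frac{\kappa}{m_b}(s^n-r^n)}$ is, by Proposition~\ref{prop:H-M} applied with $\ell=n-1$, of size $\rho_+^{-\gamma}u_b\cdot\sup|r^{n-1}\cdot\tfrac{m_b v_+}{2\mu}r^{1-n}| = \rho_+^{-\gamma}u_b\cdot\tfrac{m_b v_+}{2\mu}\sim\rho_+^{-\gamma}u_b\cdot u_b=\rho_+^{-\gamma}u_b^2$ in the $r^{n-1}$-weighted norm, and iterating the same observation with $\ell=2n-2$ promotes this to the $r^{2(n-1)}$-weighted norm. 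Once the $\eta$-estimate is secured at rate $r^{2(n-1)}$, the $\eta_r$-estimate at rate $r^{2n-1}$ follows by plugging back into \eqref{etae2} and using $\frac{n\kappa}{m_b}\sim\rho_+^\gamma u_b^{-1}$ for the first term and the term-by-term bounds for $F[\eta]$ for the second, now with the improved decay of $\eta$ available as input; one checks that the dominant contributions are precisely $u_b$ (from the leading term and the $\eta_r$-linear term) and $\rho_+^\gamma|\eta_b|/u_b$ (from $\frac{n\kappa}{m_b}r^{n-1}\eta$ with the $|\eta_b|$ part of the $\eta$-bound), matching \eqref{etaEst1}, with all other terms higher order and absorbed by smallness of $\ep$.
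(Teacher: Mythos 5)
Your overall strategy—bootstrapping from Lemma~\ref{lem:H-M21} by re-inserting the weak decay into the representation formula \eqref{etae} and estimating term-by-term with Proposition~\ref{prop:H-M}—is the paper's strategy, and you correctly identify the central obstruction: the leading, $\eta$-independent term $\tfrac{m_b v_+}{2\mu}s^{1-n}$ in $F[\eta]$ is not $O(s^{-(2n-2)})$, so Proposition~\ref{prop:H-M} cannot be applied at weight $\ell=2(n-1)$ directly. However, the two resolutions you offer do not close the gap, and the paper's actual device is different from both.

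Your first suggestion, that ``the genuinely non-decaying part of $F$ is cancelled in the fixed-point structure,'' does not materialise: $\tfrac{m_b v_+}{2\mu}s^{1-n}$ is a source term independent of $\eta$, so iterating the map $\eta\mapsto\mathcal T[\eta]$ reproduces it in every iterate with nothing to cancel against. Your second suggestion, to apply Proposition~\ref{prop:H-M} at $\ell=n-1$ (getting $r^{n-1}\cdot(\textrm{integral})\lesssim\rho_+^{-\gamma}u_b^2$) and then ``iterate... with $\ell=2n-2$ to promote to the $r^{2(n-1)}$-weighted norm,'' is circular: Proposition~\ref{prop:H-M} at $\ell=2n-2$ would require precisely the finiteness of $\sup_r r^{2n-2}\cdot\tfrac{m_b v_+}{2\mu}r^{1-n}$ that you already noted is infinite, and there is no further integral operator to which a second application of the Proposition could be applied. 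So neither route yields a rigorous proof as written.

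The missing device is integration by parts in $s$ against the exponential kernel. Writing $s^{1-n}=\tfrac{m_b}{n\kappa}s^{2-2n}\cdot\tfrac{d}{ds}e^{\frac{\kappa}{m_b}(s^n-r^n)}\cdot e^{-\frac{\kappa}{m_b}(s^n-r^n)}$ and integrating by parts converts
\begin{equation*}
  \frac{m_b v_+}{2\mu}\int_1^r s^{1-n}e^{\frac{\kappa}{m_b}(s^n-r^n)}\,ds
  = \frac{m_b^2 v_+}{2n\mu\kappa}\Bigl[r^{2-2n}-e^{-\frac{\kappa}{m_b}(r^n-1)}
    +2(n-1)\!\int_1^r\! s^{1-2n}e^{\frac{\kappa}{m_b}(s^n-r^n)}\,ds\Bigr],
\end{equation*}
which simultaneously produces the prefactor $\tfrac{m_b^2 v_+}{2n\mu\kappa}\sim\rho_+^{-\gamma}u_b^2$ and raises the integrand's decay from $s^{1-n}$ to $s^{1-2n}$, so Proposition~\ref{prop:H-M} at $\ell=2n-1$ now applies. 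The same IBP applied to \eqref{etae2} (moving the $s$-derivative off the kernel and onto $F[\eta](s)/s^{n-1}$) is what cancels the bare $F[\eta](r)$ term, produces the boundary terms (a)--(b) of the paper's formula \eqref{er-long}, and after a second IBP on the resulting $s^{1-2n}$ term yields the $r^{-2n+1}$ rate for $\eta_r$. Your order-of-magnitude bookkeeping ($\kappa\sim\rho_+^{\gamma+1}$, $m_b v_+/\mu\sim u_b$, $m_b^2 v_+/\kappa\sim\rho_+^{-\gamma}u_b^2$, $n\kappa/m_b\sim\rho_+^\gamma/u_b$) is all correct and explains why the final constants look as they do, but without the integration-by-parts step the estimates cannot be assembled.
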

\begin{proof}
    By the definition of $F[\eta]$ and formula \eqref{etae}, we have
    \begin{align*}
        \eta(r) =& \underbrace{\eta_b e^{-\frac{\kappa}{m_b}(r^n-1)}}_{\textrm{(i)}} + \underbrace{\frac{m_b v_{+}}{2\mu} \int_{1}^r \!\! s^{1-n}  e^{\frac{\kappa}{m_b}(s^n-r^n)}\,ds}_{\textrm{(ii)}} + \underbrace{\frac{m_b}{\mu} \int_{1}^{r} \frac{\eta(s)}{s^{n-1}} e^{\frac{\kappa}{m_b}(s^n-r^n)}  \, ds}_{\textrm{(iii)}}\\
        & + \underbrace{\frac{(1-n)m_b }{\mu}\int_{1}^r \!\!\! \int_{s}^{\infty}\!\!\! \eta(\tau) \frac{s^{n-1}}{\tau^{2n-1}} e^{\frac{\kappa}{m_b}(s^n-r^n)} \, d\tau}_{\textrm{(iv)}} + \underbrace{\int_{1}^r\! \frac{s^{n-1}}{\mu m_b}  N\big( \eta(s) \big) e^{\frac{\kappa}{m_b}(s^n-r^n)} \, ds}_{\textrm{(v)}}.
    \end{align*}
    We estimate the right hand side of above equality term by term. First we impose that $m_b \le \kappa$. Then by \eqref{etae} and \eqref{E0}, this is equivalent to the condition
    \begin{equation}\label{urg}
        u_b \le C \rho_{+}^{\gamma},
    \end{equation}
    where $C=C(\mu,\gamma,K,n)>0$ is the constant independent of $\rho_+$, $\rho_b$ and $u_b$. Moreover, it holds that $-\frac{\kappa}{m_b} (r^n-1) \le -(r^n-1)$ for $r\ge 1$, since $\kappa>0$. Thus we have that $\exp\big(-\frac{\kappa}{m_b}(r^n-1)\big) \le \exp\big(-(r^n-1)\big)$. Using this, we get
    \begin{align*}
        |\textrm{(i)}| 
        \le& |\eta_b| \frac{r^{-2n}}{(n-1)^2}  (n-1)^2 r^{2n} e^{ - (r^n-1)} 
        \le C |\eta_b| r^{-2(n-1)}.
    \end{align*}
    Integrating \textrm{(ii)} by parts and using Proposition \ref{prop:H-M}, we obtain
    \begin{align*}
        |\textrm{(ii)}| 
        =&  \frac{m_b^2 v_{+}}{2n\mu\kappa} \bigg| r^{2-2n} - e^{-\frac{\kappa}{m_b}(r^n-1)} + 2(n-1)\int_{1}^r s^{1-2n} e^{\frac{\kappa}{m_b} (s^n-r^n)}\, ds  \bigg| \\
        \le & C u_b^2 \rho_+^{-\gamma}   \{ r^{-2(n-1)} + r^{-2n+1} \} \le C u_b^2 \rho_+^{-\gamma} r^{-2(n-1)}.
    \end{align*}
    Using the upper bound \eqref{E0} and repeating the same argument used for estimating \textrm{(ii)}, we get
    \begin{align*}
        &|\textrm{(iii)}| \le \frac{m_b v_{+}}{4\mu} \bigg| \int_{1}^r s^{1-n} e^{\frac{\kappa}{m_b}(s^n-r^n)}\, ds \bigg|\\
        =& \frac{m_b^2 v_{+}}{4n\mu\kappa}  \bigg| r^{2-2n} - e^{-\frac{\kappa}{m_b}(r^n-1)} + 2(n-1)\int_{1}^r s^{1-2n} e^{\frac{\kappa}{m_b} (s^n-r^n)}\, ds  \bigg| \le C u_b^2 \rho_{+}^{-\gamma} r^{-2(n-1)}.
    \end{align*}
    Similarly, by Proposition \ref{prop:H-M} and \eqref{E0}, we also obtain the estimate
    \begin{align*}
        |\textrm{(iv)}| \le & \frac{(n-1)m_b}{\mu} \int_{1}^r s^{n-1} e^{\frac{\kappa}{m_b}(s^n-r^n)} \int_{s}^{\infty} \frac{|\eta(\tau)|}{\tau^{2n-1}} \, d\tau d s\\
        \le & C m_b v_{+} \int_{1}^r s^{-2n+2} e^{\frac{\kappa}{m_b}(s^n-r^n)} \,ds \le C u_b^2 \rho_{+}^{-\gamma}  r^{-2(n-1)}.
    \end{align*}
    By Taylor's theorem and upper and lower bounds \eqref{E0}, it holds that for an arbitrary $s\in [1,r]$ there exists $\xi_s\in (\frac{3}{4} v_{+}, \frac{5}{4}v_{+})$ such that 
    \begin{align}\label{NMVT}
        \big|N\big(\eta(s)\big)\big| = \frac{1}{2} p^{\prime\prime}(\xi_s) |\eta(s)|^2 \le C \rho_{+}^{\gamma+2} |\eta(s)|^2.
    \end{align}
    Using \eqref{NMVT}, Lemma \ref{lem:H-M21} and Proposition \ref{prop:H-M}, we get
    \begin{align*}
        |\textrm{(v)}| \le& C\frac{\rho_{+}^{\gamma+2}}{m_b}  \int_{1}^r s^{n-1} |\eta(s)|^2 e^{\frac{\kappa}{m_b}(s^n-r^n)}\, ds\\ 
        \le& C\frac{\rho_{+}^{\gamma+2}}{m_b} \sup\limits_{1\le \tau \le r} |\tau^{2(n-1)}\eta(\tau)| r^{-2(n-1)} r^{2(n-1)}\int_{1}^r s^{-n+1} |\eta(s)| e^{\frac{\kappa}{m_b}(s^n-r^n)}\, ds\\
        \le & C\frac{\rho_{+}^{\gamma+2}}{m_b} \sup\limits_{1\le \tau \le r} |\tau^{2(n-1)}\eta(\tau)| \cdot r^{-2(n-1)}  u_b \rho_{+}^{-\gamma}\sup\limits_{r\ge 1} |r^{n-1} \eta(r)|\\
        \le& \rho_{+} C(\rho_{+})  \big\{ |\eta_b| + u_b^2 \big\} r^{-2(n-1)} \sup\limits_{1\le \tau\le r} |\tau^{2(n-1)} \eta(\tau)|.
    \end{align*}
    Owing to the estimates \textrm{(i)}--\textrm{(v)}, we get for $r\ge 1$
    \begin{align*}
        |\eta(r)| \le C \big\{ |\eta_b| + u_b^2 \rho_{+}^{-\gamma} \big\} r^{-2(n-1)}  + \rho_{+}  C(\rho_{+}) \big\{ u_b^2 + |\eta_b| \big\} r^{-2(n-1)} \sup\limits_{1\le s \le r} |s^{2(n-1)} \eta(s)|.
    \end{align*}
    Let $R>1$. Multiplying both sides of the above inequality by $r^{2(n-1)}$ with $r\in[1,R]$. Then taking the supremum over $1\le r \le R$, we obtain that there exists $\ep=\ep(\rho_+,\mu,\gamma,K,n)>0$ and $C=C(\mu,\gamma,K,n)>0$ for which if $u_b + |\eta_b|\le \ep$ then
    \begin{equation}\label{temp:first}
        \sup\limits_{1\le r \le R} r^{2(n-1)} |\eta(r)| \le C \big\{ |\eta_b| + \rho_{+}^{-\gamma} u_b^2  \big\} \qquad \text{ for an arbitrary } \ R >1.
    \end{equation}
    Taking the limit $R\to \infty$, we obtain the first inequality for \eqref{etaEst1}.
    
    Next, we derive the estimate of $\eta_r$. Applying integration by parts on the formula \eqref{etae}, we obtain
    \begin{align}
        \eta_r(r) 
        =& \underbrace{\frac{(1-n)v_{+}}{n\mu \kappa} m_b^2 r^{-2n+1}}_{\textrm{(a)}} + \underbrace{\Big\{ F[\eta](1) - \frac{n\kappa \eta_b}{m_b} - \frac{(n-1)v_{+}}{n\mu\kappa}m_b^2 \Big\} r^{n-1} e^{-\frac{\kappa}{m_b}(r^n-1)}}_{\textrm{(b)}}\nonumber\\
        &+ \underbrace{\frac{1-n}{\mu} r^{n-1} \int_{1}^{r} \Big\{ \frac{(3n-2)v_{+}}{n\kappa} \frac{m_b^2}{s^{3n-1}} + \frac{m_b \eta(s)}{s^{2n-1}} \Big\} e^{\frac{\kappa}{m_b}(s^n-r^n)}\, ds}_{\textrm{(c)}}  \nonumber\\
        & + \underbrace{\frac{r^{n-1}}{\mu} \int_{1}^r\!\! \eta_r(s) \Big\{ \frac{m_b}{s^{2(n-1)}} + \frac{N^{\prime}\big(\eta(s)\big)}{m_b} \Big\}  e^{\frac{\kappa}{m_b}(s^n-r^n)}  \,ds}_{\textrm{(d)}}. \label{er-long}
    \end{align}
    By \eqref{etae}, \eqref{E0} and the assumption \eqref{urg}, it follows that
    \begin{align*}
        |\textrm{(a)}| \le C \frac{\rho_{+}^{-1}}{\rho_{+}^{\gamma+1}} \rho_{+}^2 u_b^2 r^{-2n+1} 
        \le C \rho_{+}^{-\gamma} u_b^2 r^{-2n+1} \le C u_b r^{-2n+1}.
    \end{align*}
    Using the definition of $F[\eta]$, \eqref{NMVT} and \eqref{temp:first}, we verify that
    \begin{align*}
        &\Big|F[\eta](1)-\frac{n\kappa \eta_b}{m_b}\Big|=\bigg|\frac{m_b v_{+}}{2\mu}+\frac{m_b \eta_b}{\mu}-\frac{(n-1)m_b}{\mu}\int_{1}^{\infty}\frac{\eta(s)}{s^{2n-1}}\,ds + \frac{p(v_{+}+\eta_b)-p(v_{+})}{\mu m_b}\bigg|\\
        \le & C \Big\{ u_b + \rho_{+} u_b \big(|\eta_b| + u_b^2 \rho_{+}^{-\gamma}\big)  + \frac{|\eta_b|}{u_b} \rho_{+}^{\gamma} \Big\}.
    \end{align*}
    By the property of exponential function, the condition $m_b\le n\kappa$ and \eqref{urg}, we have
    \begin{align*}
        |\textrm{(b)}| \le& \Big| F[\eta](1) - \frac{n\kappa \eta_b}{m_b} - \frac{(n-1)v_{+}}{n\mu\kappa}m_b^2 \Big| r^{-2n+1} (r^n)^{\frac{3n-2}{n}} e^{-r^n+1}\\ 
        \le& C \Big\{ \rho_{+}^{\gamma} \frac{|\eta_b|}{u_b} + u_b  + \rho_{+}|\eta_b| u_b + \rho_{+} u_b^2 \Big\} r^{-2n+1},
    \end{align*}
    Applying Proposition \ref{prop:H-M}, \eqref{E0} and \eqref{urg}, we obtain
    \begin{align*}
        |\textrm{(c)}|
        \le& \frac{C}{r^{2n-1}} \frac{u_b}{\rho_{+}^{\gamma}} \sup\limits_{r\ge 1} \Big| \frac{(3n-2)v_{+}}{n\kappa} \frac{m_b^2}{r} + m_b r^{n-1} \eta(r) \Big| 
        \le C u_b r^{-2n+1}.
    \end{align*}
    By mean value theorem, we have $|N^{\prime}\big(\eta(s)\big)| \le C \rho_{+}^{\gamma+2} |\eta(s)|$. Using this, \eqref{temp:first} and Proposition \ref{prop:H-M}, we get
    \begin{align*}
        |\textrm{(d)}|
        \le& C r^{-2n+1} \sup\limits_{1\le \tau\le r}\big\{\tau^{2n-1}\eta_r(\tau)\big\} \cdot \rho_{+}^{-\gamma} u_b \sup\limits_{s\ge 1}\bigg\{ \frac{\rho_{+} u_b}{s^{n-1}} + \rho_{+}^{\gamma+1}\frac{s^{n-1}|\eta(s)|}{u_b}  \bigg\}\\
        \le& C \big\{ \rho_{+}|\eta_b| + \rho_{+}^{1-\gamma} u_b^2 \big\} r^{-2n+1} \sup\limits_{1\le s \le r} \big\{ s^{2n-1} \eta_r(s) \big\}.
    \end{align*}
    Substituting the estimates for \textrm{(a)}--\textrm{(d)} in \eqref{er-long}, multiplying both sides by $r^{2n-1}$ and taking supremum over $r\in [1,R]$ for $R>1$, we get
    \begin{align*}
        \sup\limits_{1 \le r\le R} r^{2n-1} |\eta_r (r)| \le& C \Big\{ \rho_{+}^{\gamma} \frac{|\eta_b|}{u_b} + u_b  + \rho_{+}|\eta_b| u_b + \rho_{+} u_b^2 \Big\}\\ 
        &+ C \big\{\rho_{+}|\eta_b| + \rho_{+}^{1-\gamma} u_b^2 \big\} \sup\limits_{1\le r \le R} r^{2n-1} |\eta_r(r)|.
    \end{align*}
    Thus, there exist $\ep=\ep(\rho_+,\mu,\gamma,K,n)>0$ and $C=C(\mu,\gamma,K,n)>0$ such that if $|u_b|+|\eta_b|\le \ep$ then for arbitrary $R>1$,
    \begin{align*}
        \sup\limits_{1\le r \le R} r^{2n-1} |\eta_r (r)| 
        \le C \Big\{ u_b + \rho_{+}^{\gamma} \frac{|\eta_b|}{u_b} \Big\}.
    \end{align*}
    Taking $R\to \infty$ yields the second inequality of \eqref{etaEst1}. 
\end{proof}

The decay rate estimates \eqref{rho} and \eqref{ur}  are obtained by combining Lemma \ref{lemma:etaEst} with the definition $\eta=\rt^{-1}- \rho_{+}^{-1}$ and the formula \eqref{stu}.

\subsection{Classification of extremum points in density}\label{ssec:extremum}
This subsection is devoted to the classification of extremum points in $\rt(r)$, which is stated in \ref{item:I}--\ref{item:II} of Lemma \ref{le:st}. To do this, it suffices to classify the extremum points of $\eta(r)= 1/\rt(r) - 1/\rho_+$ instead. First, we prove the following proposition.
\begin{proposition}\label{prop:etalr}
    Let $\eta(r)$ be the solution constructed in Section \ref{ssec:stExist}. Then there exists a constant $R=R(\rho_+,\mu,\gamma,K,n)>1$ such that for $r\ge R$,
    \begin{equation}\label{etalr}
        C^{-1} u_b^2 r^{-2(n-1)} \le \eta(r) \quad \text{ and } \quad \eta_r(r)<0.
    \end{equation}
\end{proposition}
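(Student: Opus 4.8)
The plan is to analyze the representation formula \eqref{etae2} for $\eta_r$ together with the asymptotic behavior of each term in the decomposition of $\eta$ used in the proof of Lemma \ref{lemma:etaEst}. The key observation is that, among the five pieces (i)--(v) contributing to $\eta(r)$, the boundary term (i) decays like $e^{-r^n}$ (super-polynomially), while the dominant polynomial contribution comes from (ii), which equals $\frac{m_b^2 v_+}{2n\mu\kappa}\big(r^{2-2n} + o(r^{2-2n})\big)$ after integration by parts. Since $m_b^2 v_+ /(2n\mu\kappa) > 0$, this term is strictly positive and, crucially, is bounded below by a positive multiple of $u_b^2 r^{-2(n-1)}$ (using $m_b = \rho_b u_b \sim \rho_+ u_b$ and $v_+ = \rho_+^{-1}$, $\kappa \sim \rho_+^{-\gamma}$). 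The remaining terms (iii)--(v) are all $O\big((|\eta_b| + \rho_+^{-\gamma} u_b^2) r^{-2(n-1)}\big)$ but carry an extra small factor — either $u_b \rho_+^{-\gamma}$ from Proposition \ref{prop:H-M} applied once more, or the smallness of $\|\eta\|_X$ — so that, under the hypothesis $|\eta_b|\le u_b^2$ and after possibly shrinking $\ep$, they are absorbed into a fraction of term (ii) for $r$ large. This yields the lower bound $\eta(r)\ge C^{-1}u_b^2 r^{-2(n-1)}$ for $r\ge R$.

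First I would make the leading-order expansion of term (ii) explicit: integrating by parts exactly as in the proof of Lemma \ref{lemma:etaEst},
\[
\textrm{(ii)} = \frac{m_b^2 v_+}{2n\mu\kappa}\Big( r^{2-2n} - e^{-\frac{\kappa}{m_b}(r^n-1)} + 2(n-1)\!\int_1^r s^{1-2n} e^{\frac{\kappa}{m_b}(s^n-r^n)}\,ds\Big),
\]
and I would estimate the last integral by $C u_b \rho_+^{-\gamma} r^{1-2n} = o(r^{2-2n})$ via Proposition \ref{prop:H-M} (with $\ell = 2n-1 \le 3n-3$ since $n\ge 2$), and the exponential term by $C r^{-2n+1}e^{-r^n+1}$, which decays faster than any power. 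Hence $\textrm{(ii)} \ge \frac{m_b^2 v_+}{4n\mu\kappa} r^{2-2n}$ once $r\ge R_1$ for a suitable $R_1$ depending on $\rho_+$ and the other parameters. Then I would collect the bounds $|\textrm{(i)}|\le C|\eta_b| r^{-2(n-1)}$ and $|\textrm{(iii)}| + |\textrm{(iv)}| + |\textrm{(v)}| \le C\{\rho_+^{-\gamma} u_b^2 + |\eta_b|\}\cdot\delta(\ep)\cdot r^{-2(n-1)}$, where $\delta(\ep)\to 0$ as $\ep\to 0$ (this is where the extra factor $u_b\rho_+^{-\gamma}$ or $\|\eta\|_X$ from the Lemma \ref{lemma:etaEst} estimates enters). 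Using $|\eta_b|\le u_b^2$ and comparing with the coefficient $\frac{m_b^2 v_+}{4n\mu\kappa}\sim \rho_+^{1+\gamma-1-1}u_b^2 = \rho_+^{\gamma-1}u_b^2$ — here I must be careful to verify the powers of $\rho_+$ match up, which is exactly why the $\rho_+$-explicit rates in Lemma \ref{lemma:etaEst} were needed — I can choose $\ep$ small and $R$ large so that $\eta(r)\ge \frac{1}{2}\textrm{(ii)} \ge C^{-1}u_b^2 r^{-2(n-1)}$ for $r\ge R$.

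For the sign of $\eta_r$, I would use \eqref{etae2}: $\eta_r(r) = -\frac{n\kappa}{m_b} r^{n-1}\eta(r) + F[\eta](r)$. The first term is strictly negative and, by the lower bound just proved, is $\le -\frac{n\kappa}{m_b} C^{-1} u_b^2 r^{-(n-1)}$ for $r\ge R$. Meanwhile, from the definition of $F[\eta]$ and the decay estimates of Lemma \ref{lemma:etaEst}, $|F[\eta](r)| \le C\{\rho_+ u_b + \rho_+^{-\gamma+1}u_b^2 r^{1-n} + \rho_+^{\gamma+2}\|\eta\|_X^2 r^{1-n}\} r^{-(n-1)}$ — wait, more precisely the leading piece of $F[\eta]$ is $\frac{m_b v_+}{2\mu} r^{1-n}$, which is $O(u_b r^{1-n})$, the same order as the main term. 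So the naive comparison fails, and I would instead use the refined structure: $F[\eta](r) = \frac{n\kappa}{m_b} r^{n-1}\eta(r) + \eta_r(r)$ is not directly helpful; instead I would differentiate or re-examine \eqref{er-long}. In fact the cleanest route is to note from \eqref{er-long} that $\eta_r(r) = \textrm{(a)} + \textrm{(b)} + \textrm{(c)} + \textrm{(d)}$ with $\textrm{(a)} = \frac{(1-n)v_+}{n\mu\kappa}m_b^2 r^{-2n+1} < 0$ being the strictly negative leading term (coefficient $\sim -\rho_+^{\gamma-1}u_b^2$), while $|\textrm{(b)}|$ decays super-polynomially and $|\textrm{(c)}| + |\textrm{(d)}|$ carry an extra small factor $u_b\rho_+^{-\gamma}$ or $\rho_+$-power times $\ep$ relative to $r^{-2n+1}$ from Proposition \ref{prop:H-M}. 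Hence for $r\ge R$ with $\ep$ small and $R$ large, $\eta_r(r) \le \frac{1}{2}\textrm{(a)} < 0$.

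The main obstacle is the bookkeeping of the powers of $\rho_+$: I must check that the genuinely-present lower-order error terms are smaller than the leading terms $\textrm{(ii)}$ and $\textrm{(a)}$ not just in powers of $r$ but uniformly — i.e. with constants independent of $\rho_+$ after using $|\eta_b|\le u_b^2$ — so that the thresholds $\ep$ and $R$ can be chosen consistently. This is precisely the reason the $\rho_+$-explicit decay rates of Lemma \ref{lemma:etaEst} (rather than the cruder rate \eqref{h-m-rate}) are indispensable here; with them the comparison is mechanical, and the construction of $R = R(\rho_+,\mu,\gamma,K,n)$ goes through.
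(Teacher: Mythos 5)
The overall direction — decompose $\eta(r)$ via the representation formula \eqref{etae}, isolate the dominant positive term of size $u_b^2 r^{-2(n-1)}$, and absorb the rest for $r$ large — matches the paper's. But there are two genuine gaps in the way you propose to execute it, and the paper's actual argument sidesteps both by using sign observations rather than magnitude estimates.

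\textbf{The lower bound on $\eta$.} You keep the five-term decomposition of Lemma~\ref{lemma:etaEst} and claim that (iii)--(v) carry an ``extra small factor'' (either $u_b\rho_+^{-\gamma}$ from one more application of Proposition~\ref{prop:H-M}, or smallness of $\|\eta\|_X$) that lets them be absorbed into a fraction of (ii). This is not justified as stated: the bound for (iii) in Lemma~\ref{lemma:etaEst} uses $|\eta(s)|\le\frac{v_+}{4}s^{-(n-1)}$ and yields $|\textrm{(iii)}|\le Cu_b^2\rho_+^{-\gamma}r^{-2(n-1)}$, the \emph{same} order in every parameter as (ii); the only reason (ii)+(iii) is still positive is that the coefficient of the bound on (iii) is exactly half that of (ii), a fact you do not observe. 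The paper avoids this entirely by grouping the pieces corresponding to your (ii) and (iii) into a \emph{single} integral
\[
\frac{m_b}{\mu}\int_1^r\Big(\frac{v_+}{2}+\eta(s)\Big)s^{1-n}e^{\frac{\kappa}{m_b}(s^n-r^n)}\,ds,
\]
and then uses the pointwise bound $\frac{v_+}{2}+\eta(s)\ge\frac{v_+}{4}>0$ from \eqref{E0}. Similarly, your term (v), the $N$-term, is not something to bound: by Taylor's theorem and $p''>0$ it is pointwise \emph{nonnegative}, so it helps the lower bound with no estimate needed. Finally, you introduce the hypothesis $|\eta_b|\le u_b^2$, which is not in the proposition; the paper instead handles the $\eta_b$-term (your (i)) by using the sign $\eta_b<0$ in the hard case and the super-polynomial decay of $e^{-\kappa(r^n-1)/m_b}$, so that the term is dominated by the $r^{2(1-n)}$ term for $r\ge R$ regardless of the size of $|\eta_b|$ relative to $u_b$.

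\textbf{The sign of $\eta_r$.} You correctly notice that the naive comparison via \eqref{etae2} fails (the leading piece $\frac{m_bv_+}{2\mu}r^{1-n}$ of $F[\eta]$ is the same order as $-\frac{n\kappa}{m_b}r^{n-1}\eta$), and pivot to \eqref{er-long}. However, your claim that (c) and (d) carry an extra small factor ``from Proposition~\ref{prop:H-M}'' relative to $r^{-2n+1}$ does not hold. The integrand in (d) decays like $s^{-(4n-3)}$, but Proposition~\ref{prop:H-M} only allows the weight exponent $\ell\le 3n-3$, and taking $\ell=3n-3$ gives $\int_1^r(\cdot)\,ds\lesssim\rho_+^{-\gamma}u_b\,r^{-(3n-3)}$, hence $(\textrm{d})\lesssim r^{n-1}\cdot r^{-(3n-3)}=r^{-(2n-2)}$, which decays \emph{slower} than $(\textrm{a})\sim r^{-(2n-1)}$. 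So Proposition~\ref{prop:H-M} alone cannot absorb (d) into (a) as $r\to\infty$. To make this work one must exploit a sharper Laplace-type estimate (the actual decay of the integral is $\sim\frac{m_b}{n\kappa}r^{-(5n-4)}$, giving $(\textrm{d})\sim r^{-(4n-3)}$), or observe sign cancellations (e.g.\ that the $m_b^2$-piece of (c) has the same sign as (a)), neither of which your proposal identifies. The paper abbreviates this step, but its phrase ``the same procedure'' refers to the sign-based grouping and the super-polynomial decay of the boundary term, not to the crude Proposition~\ref{prop:H-M} bound.
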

\begin{proof}
    We only show the case for $\eta_b<0$ since the other case $\eta_b\ge 0$ is handled similarly. From the representation formula \eqref{etae}, we have
    \begin{align*}
        \eta(r) =& \underbrace{\frac{m_b}{\mu}\int_{1}^r \!\!\big( \frac{v_+}{2} + \eta(s) \big) s^{1-n} e^{\frac{\kappa}{m_b}(s^n-r^n)}\, ds}_{\textrm{(i)}} + \underbrace{\eta_b e^{-\frac{\kappa}{m_b}(r^n-1)}}_{\textrm{(ii)}}\\
        &+ \underbrace{\int_{1}^{r}\frac{s^{n-1}}{\mu m_b}N\big(\eta(s)\big) e^{\frac{\kappa}{m_b}(s^n-r^n)} \,ds}_{\textrm{(iii)}} + \underbrace{\frac{(1-n)m_b}{\mu} \int_{1}^r\int_{s}^{\infty} \frac{\eta(\tau)}{\tau^{2n-1}} e^{\frac{\kappa}{m_b}(s^n-r^n)}\, d\tau ds}_{\textrm{(iv)}}.
    \end{align*}
    By \eqref{E0}, it holds that $\frac{1}{2}v_{+} + \eta(s) \ge \frac{1}{4}v_{+}$. Moreover, by the decay property of exponential function, there exists a constant $R_1 > 1$ such that the inequality $1-e^{\frac{\kappa}{m_b}(1-r^n)} \ge \frac{1}{2}$ holds for $r\ge R_1$. Using these, we get
    \begin{align*}
        \textrm{(i)} \ge& \frac{v_{+}m_b}{4\mu} \int_{1}^{r} s^{1-n} e^{\frac{\kappa}{m_b}(s^n-r^n)}\, ds  
        \ge \frac{m_b^2 v_{+}}{4n\kappa \mu} r^{2(1-n)} e^{-\frac{\kappa}{m_b}r^n} \int_{1}^r \frac{\kappa}{m_b} n s^{n-1} e^{\frac{\kappa}{m_b}s^n}\, ds\\
        =& \frac{m_b^2 v_{+}}{4n\kappa \mu} r^{2(1-n)} \big\{1- e^{\frac{\kappa}{m_b}(1-r^n)}\big\} \ge \frac{v_{+}}{8n\kappa \mu} m_b^2 r^{2(1-n)}, \quad\qquad \text{for } \ r\ge R_1.
    \end{align*}
    Next, for exponential function, we have $- e^{-\frac{\kappa}{m_b}r^n} \ge - C \kappa^{-4} m_b^4 r^{-4n}$. Since $\eta_b<0$, this inequality implies that there exists another constant $R_2>1$ for which if $r\ge R_2$, then
    \begin{align*}
        \textrm{(ii)} \ge \eta_b e^{\frac{\kappa}{m_b}} C \frac{m_b^4}{\kappa^4} r^{-4n} = 
        \eta_b e^{\frac{\kappa}{m_b}} C \frac{m_b^4}{\kappa^4} r^{-2(n+1)} r^{2(1-n)} \ge - \frac{v_{+}}{32n\kappa \mu} m_b^2 r^{2(1-n)}. 
    \end{align*}
    By Taylor's theorem, $ N\big(\eta(s)\big) = \frac{1}{2} p^{\prime\prime}(\xi) |\eta(s)|^2 >0$ for some $\xi\in (\frac{3}{4}v_{+}, \frac{5}{4}v_{+})$. Thus it holds that $\textrm{(iii)} >0$. Finally, by Lemma \ref{lem:H-M21}, we have
    \begin{align*}
        \bigg|\int_{s}^{\infty} \!\! \frac{\eta(\tau)}{\tau^{2n-1}}\, d\tau \bigg| \le C \big\{ u_b^2 + |\eta_b| \big\} \int_{s}^{\infty} \!\! \tau^{-3n+2} \, d \tau = C \big\{ u_b^2 + |\eta_b| \big\}  s^{3(1-n)}.
    \end{align*}
    Using this inequality in \textrm{(iv)} and applying Proposition \ref{prop:H-M}, we obtain that there exists a constant $R_3\ge 1$ such that for $r\ge R_3$,
    \begin{align*}
        |\textrm{(iv)}| \le& C \big\{ u_b^2 + |\eta_b| \big\} m_b r^{3(1-n)} \cdot r^{3(n-1)} \int_{1}^{r}\!\! s^{3(1-n)}e^{\frac{\kappa}{m_b}(s^n-r^n)}\, ds\\
        \le & C \big\{ u_b^2 + |\eta_b| \big\} m_b^2 r^{3(1-n)} \le \frac{v_{+}}{32 n \kappa \mu} m_b^2 r^{2(n-1)}.
    \end{align*}
    Substituting the estimates for \textrm{(i)}--\textrm{(iv)} in the formula \eqref{etae}, we obtain that
    \begin{align*}
        \eta(r) \ge \textrm{(i)} + \textrm{(ii)} - |\textrm{(iv)}| 
        \ge  C^{-1} m_b^2 r^{2(1-n)},
    \end{align*}
    for $r\ge R \vcentcolon= \max\{R_1,R_2,R_3\}> 1$. This proves the first inequality of \eqref{etalr}. By the same procedure applied on the formula of $\eta_r(r)$ given in \eqref{er-long}, it is also verified that there exists a constant $R>1$ such that $\eta_r(r) <0$ for $r\ge R$. The detailed verification for this is abbreviated since it repeats the same argument.
\end{proof}

The sign of the boundary data $\eta_b=v_b-v_+$ determines the extremum points of $\eta(r)$ as follows
\begin{lemma}
    There exists a constant $\ep=\ep(\rho_+,\mu,\gamma,K,n)>0$ such that if $|\eta_b|+ u_b \le \ep$, then the problem \eqref{eta0} has a unique solution $\eta$, for which the following holds
    \begin{enumerate}[label=\textnormal{(\Alph*)},ref=\textnormal{(\Alph*)}]
        \item\label{item:A} if $\eta_b\le 0$, then there exists a unique point $r^{\ast}>1$ such that $\eta(r)$ is strictly increasing in $1\le r<r_{\ast}$ and strictly decreasing in $r>r_{\ast}$. Moreover,
        \begin{equation}
            \max_{r\ge 1}\eta(r) = \eta(r^{\ast}) >0, \qquad \lim\limits_{r\to\infty} \eta(r) = 0. 
        \end{equation}
        \item\label{item:B} if $\eta_b>0$, then $\eta(r)$ is strictly decreasing in $r\ge 1$.
    \end{enumerate}
\end{lemma}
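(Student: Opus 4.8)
The plan is to reduce the whole statement to the sign of $\eta_r$, resting on one structural identity. If $r_0\ge 1$ is any point with $\eta_r(r_0)=0$, then in the second–order equation \eqref{ODE0} the terms $p(v_++\eta)_r$ and $m_b^2r^{2-2n}\eta_r$ both vanish at $r_0$, and the equation collapses to
\[
\mu m_b\,r_0^{1-n}\eta_{rr}(r_0)=-(n-1)m_b^2\,r_0^{1-2n}\big(v_++\eta(r_0)\big).
\]
Since $m_b=\rho_b u_b>0$ and $v_++\eta(r_0)=1/\rt(r_0)>0$ by \eqref{E0}, this gives $\eta_{rr}(r_0)<0$. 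Hence every critical point of $\eta$ in $[1,\infty)$ is a strict local maximum (from the right, if $r_0=1$), and therefore $\eta$ has \emph{at most one} critical point: two of them would force an interior local minimum in between, which is impossible.

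Next I would read off the sign of $\eta_r$ at $r=1$ from \eqref{reform} (equivalently \eqref{etae2}), which at $r=1$ reads
\[
\eta_r(1)=\frac{1}{\mu m_b}\big(p(v_b)-p(v_+)\big)+\frac{m_b v_+}{2\mu}+\frac{m_b\eta_b}{\mu}-\frac{(n-1)m_b}{\mu}\int_1^\infty\frac{\eta(s)}{s^{2n-1}}\,ds .
\]
By Lemma \ref{lemma:etaEst} the integral is $O\big(m_b(|\eta_b|+\rho_+^{-\gamma}u_b^2)\big)$ and $m_b\eta_b/\mu$ is likewise of lower order, while $m_b v_+/(2\mu)\sim u_b$ is strictly positive and $p(v_b)-p(v_+)$ has sign $-\mathrm{sign}(\eta_b)$ with $|p(v_b)-p(v_+)|\sim\rho_+^{\gamma+1}|\eta_b|$. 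Thus for $\eta_b\le 0$ (case \ref{item:A}) the two dominant contributions are $\ge 0$ and strictly positive, so $\eta_r(1)>0$ once $\ep$ is small; for $\eta_b>0$ (case \ref{item:B}) the pressure-difference term is negative and, for $\ep$ small enough (allowed to depend on $\rho_+$), dominates, giving $\eta_r(1)<0$.

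Finally I would combine these with Proposition \ref{prop:etalr}, which gives $\eta(r)\ge C^{-1}u_b^2 r^{2(1-n)}>0$ and $\eta_r(r)<0$ for all large $r$. In case \ref{item:A}: $\eta_r(1)>0$ and $\eta_r<0$ near infinity produce, by the intermediate value theorem, a critical point; it is unique by the structural fact, hence a strict local maximum, so $\eta$ is strictly increasing on $[1,r^\ast)$ and strictly decreasing on $(r^\ast,\infty)$ with $\max_{r\ge1}\eta=\eta(r^\ast)$; moreover $r^\ast>1$ directly from $\eta_r(1)>0$ (or because $r^\ast=1$ would force $\eta(r)<\eta_b\le0$ for $r>1$, contradicting positivity at infinity), $\eta(r^\ast)\ge\eta(r)>0$ for $r$ large, and $\lim_{r\to\infty}\eta(r)=0$ by Lemma \ref{lem:H-M21}. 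In case \ref{item:B}: a critical point would be a strict local maximum, forcing $\eta_r>0$ somewhere to its left; but $\eta_r(1)<0$ together with "at most one critical point" rules this out, so $\eta_r<0$ on all of $[1,\infty)$, i.e.\ $\eta$ is strictly decreasing.

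The step I expect to be the main obstacle is establishing $\eta_r(1)<0$ in case \ref{item:B}: there the pressure-difference term, of size $\sim\rho_+^{\gamma}|\eta_b|/u_b$, competes directly with the inertial term $m_b v_+/(2\mu)\sim u_b$, and making the former dominate requires both the sharp decay estimate of Lemma \ref{lemma:etaEst} to control the remainder integral and careful bookkeeping of the $\rho_+$-dependence so that the resulting threshold can be absorbed into a suitably small $\ep=\ep(\rho_+,\mu,\gamma,K,n)$. Once that sign is in hand, the remaining pieces — the second-derivative identity, uniqueness of the critical point, and the intermediate value argument — are soft.
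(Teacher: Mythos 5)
Your proposal follows the paper's argument step for step: the structural fact that a critical point of $\eta$ forces $\eta_{rr}<0$, read off from the second-order equation; the sign of $\eta_r(1)$ extracted from the integrated form \eqref{reform}/\eqref{etae2}; the far-field behaviour supplied by Proposition~\ref{prop:etalr}; and the final assembly via the intermediate value theorem. The only cosmetic differences are that you compute $\eta_{rr}(r_0)<0$ directly from \eqref{ODE0} at a critical point, whereas the paper proves the contrapositive form \eqref{noMin} via \eqref{etarr} (the same calculation), and that in case~\ref{item:A} you verify $\eta_r(1)>0$ explicitly rather than appealing to Rolle's theorem on $\eta$; both are harmless variants.

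The obstacle you flag in case~\ref{item:B} is, however, more than bookkeeping, and your sentence ``the pressure-difference term \dots dominates'' is not correct as stated. From \eqref{reform} at $r=1$,
\begin{equation*}
\eta_r(1) = \frac{p(v_b)-p(v_+)}{\mu m_b} + \frac{m_b v_+}{2\mu} + \frac{m_b\eta_b}{\mu} - \frac{(n-1)m_b}{\mu}\int_1^\infty\!\frac{\eta(s)}{s^{2n-1}}\,ds,
\end{equation*}
and after Lemma~\ref{lemma:etaEst} controls the tail integral, the two leading terms are $\sim -\rho_{+}^{\gamma}\eta_b/u_b$ (negative) and $m_b v_+/(2\mu)\sim u_b$ (positive). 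The positive term wins whenever $0<\eta_b\lesssim u_b^2\rho_{+}^{-\gamma}$, and shrinking $\ep$ cannot change this since $\eta_b$ and $u_b$ shrink together. Indeed at $\eta_b=0$ one has $\eta_r(1)\sim u_b>0$, which persists by continuity for small positive $\eta_b$; and since Proposition~\ref{prop:etalr} gives $\eta(r)\ge C^{-1}u_b^2 r^{2(1-n)}>0$ for large $r$ regardless of the sign of $\eta_b$, a tiny positive $\eta_b$ would force $\eta$ to increase somewhere. So conclusion~\ref{item:B}, in the full generality ``every $\eta_b>0$ with $u_b+|\eta_b|\le\ep$'', cannot be obtained by this route. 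Note that this is not a flaw unique to your write-up: the paper's own final sentence of Case~2 (``we observe from the formula \eqref{etae2} and Lemma~\ref{lemma:etaEst} \dots then $\eta_r|_{r=1}\le 0$'') asserts the same thing without a quantitative verification and has the same unresolved difficulty. A hypothesis of the form $\eta_b\gtrsim u_b^2\rho_{+}^{-\gamma}$ appears necessary for~\ref{item:B}; the condition $|\rho_b-\rho_+|\le u_b^2$ in Theorem~\ref{mth} is an upper rather than a lower bound on $\eta_b$ and does not supply it.
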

\begin{proof}
    We claim that for $r>1$,
    \begin{equation}\label{noMin}
        \text{either } \quad  \eta_r(r) \neq 0 \quad \text{ or } \quad \eta_{rr}(r) < 0. 
    \end{equation}
    We prove the above statement by contradiction. Suppose otherwise that there exists $\bar{r}> 1$ for which both $\eta_r(\bar{r}) = 0$ and $ \eta_{rr}(\bar{r})\ge 0$ hold. Then substituting $\eta_r(\bar{r})=0$ in \eqref{etae2}, we get
    \begin{align} \label{Fbarr}
        \frac{n\kappa}{m_b} \bar{r}^{n-1} \eta(\bar{r}) = F[\eta](\bar{r}). 
    \end{align}
    Differentiating \eqref{etae2} and taking $r=\bar{r}$ on the resultant equality, we get
    \begin{align}
        \nonumber \eta_{rr}(\bar{r}) 
        =& \Big\{\frac{m_b}{\mu \bar{r}^{n-1}}+ \frac{\bar{r}^{n-1}N^{\prime}\big(\eta(\bar{r})\big)}{\mu m_b} - \frac{n\kappa \bar{r}^{n-1}}{m_b}\Big\}\eta_r(\bar{r}) +\frac{n-1}{\bar{r}} F[\eta](\bar{r}) \\
        & + (n-1) \Big\{\frac{m_b}{\mu \bar{r}^n} - \frac{n\kappa}{m_b} \bar{r}^{n-2}\Big\}\eta(\bar{r}) - \frac{2(n-1)m_b}{\mu \bar{r}^n}\Big( \dfrac{v_{+}}{2} + \eta(\bar{r}) \Big).\label{etarr}
    \end{align}
    Substituting \eqref{Fbarr}, $\eta_r(\bar{r})=0$ and $\eta_{rr}(\bar{r})\ge 0$ in \eqref{etarr}, we obtain that
    \begin{align*}
        0 \le \eta_{rr}(\bar{r}) = -\frac{(n-1)m_b}{\mu} \frac{\eta(\bar{r})}{\bar{r}^n} - \frac{(n-1)m_b}{\mu} \frac{v_+}{\bar{r}^n}.
    \end{align*}
    Since $m_b>0$, it follows that $\eta(\bar{r})\le - v_{+}$ which implies $v(\bar{r})\le 0$. However this is impossible since $v>0$ by \eqref{E0}. Therefore a local minimum or a point of inflection does not exist for $r>1$ and thus the only possible such point occurs at $r=1$. 
    
    From here, the proof is divided into $2$ cases as follows
    \paragraph{\bf Case 1: $\eta_b \le 0$.} Proposition \ref{prop:etalr} implies there exists a point $R>1$ such that we have $\eta(r)\ge C^{-1} m_b^2 r^{-2(n-1)}>0$ for $r\ge R$. Since $\eta\in \mathcal{B}^2[1,\infty)$, $\eta(r)\to 0$ as $r\to \infty$ and \eqref{noMin} holds, applying Rolle's theorem yields that there exits a unique point $r_{\ast}>1$ with $\eta(r_{\ast})=\text{max}_{r\ge 1} \eta(r)$.    
    \paragraph{\bf Case 2: $\eta_b> 0$.} We consider two sub-cases $\eta_r\vert_{r=1}\le 0$ or $\eta_r \vert_{r=1}>0$. 
First, consider the case $\eta_r\vert_{r=1}\le 0$. We show $\eta_r \vert_{r=1}<0$ by contradiction. Suppose otherwise that $\eta_r \vert_{r=1} =0$. Then by repeating the same argument used in the proof of \eqref{noMin}, we get that $\eta_b=\eta\vert_{r=1}\le -v_{+}$. This leads to $v_b\le 0$, which is a contradiction. Thus, for this sub-case, we must have $\eta_r \vert_{r=1} < 0$. Suppose $\eta(r)$ is not strictly decreasing, which means there exist two points $1 < r_1 < r_2 $ such that $\eta_r \ge 0$ in $r\in (r_1,r_2)$. Then by intermediate value theorem, there exists a point $\bar{r}>1$ such that $\eta_r(\bar{r})=0$ and $\eta_{rr}(\bar{r})\ge 0$. This is a contradiction to \eqref{noMin}. Therefore we conclude that $\eta(r)$ is strictly decreasing. Finally we observe from the formula \eqref{etae2} and Lemma \ref{lemma:etaEst} that, we can choose a small constant $\ep>0$ for which if $\eta_b> 0$ and $u_b+|\eta_b|\le \ep$ then $\eta_r \vert_{r=1} \le 0$. This rules out the second sub-case $\eta_r \vert_{r=1}>0$ and we conclude that $r\mapsto \eta(r)$ is strictly decreasing with $\eta_b> 0$ and $u_b+|\eta_b|\le \ep$ for $\ep$ small enough. 
\end{proof}

\section{A-priori estimates in Lagrangian coordinate}\label{sec:prioriE}
The main strategy for the proof of Theorem~\ref{mth} is to employ the energy method and derive a-priori estimates in the Lagrangian coordinate. For inflow problem formulated in the Eulerian coordinate, the main difficulty is the boundary term in the estimate of $\psi\vcentcolon=u-\ut$. To resolve this issue, we use the structure of the equations under the Lagrangian formulation. First we state the local-in-time existence in the Eulerian coordinates
\begin{lemma}\label{lm:local}
    Suppose the initial data $(\rho_0,u_0)$ satisfies the same conditions stated in Theorem \ref{mth}. Then there exists a constant $T_0>0$, depending only on $\| r^{\frac{n-1}{2}} (\rho_0-\rt) \|_{\mathcal{B}^{1+\sigma}}$ and $\| r^{\frac{n-1}{2}} (u_0-\ut) \|_{\mathcal{B}^{2+\sigma}}$, such that the initial-boundary value problem \eqref{nse}--\eqref{compa} and \eqref{u+} has a unique solution satisfying 
    \begin{gather*}
        r^{\frac{n-1}{2}} (\rho-\rt ), \qu r^{\frac{n-1}{2}} u, \qu r^{\frac{n-1}{2}} (\rho-\rt)_r\,, \qu r^{\frac{n-1}{2}} u_r \ \in C\big([0,T]; L^2(1,\infty)\big),\\
        \rho \in \mathcal{B}^{1+\frac{\sigma}{2},\, 1+\sigma}\big([0,T]\times[1,\infty)\big), \qquad u\in \mathcal{B}^{1+\frac{\sigma}{2},\, 2+\sigma}\big([0,T]\times[1,\infty)\big),
    \end{gather*}
    for an arbitrary $T\in (0,T_0)$.
\end{lemma}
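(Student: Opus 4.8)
The plan is to obtain the local solution by a linear iteration scheme that decouples the hyperbolic continuity equation from the parabolic momentum equation, following the now-standard treatment of compressible Navier--Stokes with inflow boundary (cf.\ \cite{itaya85, m-n01}). First I would rewrite \eqref{nse} in terms of the perturbation $(\phi,\psi):=(\rho-\rt,\,u-\ut)$. Since $u_b=\ut(1)$, $\rho_b=\rt(1)$ and $(\rho_+,u_+)=(\rt(\infty),0)$, the unknown $(\phi,\psi)$ carries homogeneous boundary and far-field data, and the compatibility conditions \eqref{compa} become compatibility of the perturbed initial datum at the corner $(r,t)=(1,0)$ up to the order fixed by the H\"older exponent $\sigma$. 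After using that $(\rt,\ut)$ solves the stationary system \eqref{st}, the perturbed equations take the schematic form
\[
\phi_t + u\,\phi_r + \rho\Big(\psi_r + \tfrac{n-1}{r}\,\psi\Big) = g_1(\phi,\psi),
\qquad
\rho\,\psi_t - \mu\Big(\tfrac{(r^{n-1}\psi)_r}{r^{n-1}}\Big)_r = g_2(\phi,\psi,\phi_r,\psi_r),
\]
with $\phi(1,t)=\psi(1,t)=0$ and decay as $r\to\infty$, where $g_1,g_2$ collect lower-order terms built from the known, smooth, decaying stationary profile and from products of $(\phi,\psi,\phi_r,\psi_r)$.

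For the iteration I would take $(\phi^{0},\psi^{0})$ to be the time-independent extension of the initial perturbation and, given $(\phi^{k},\psi^{k})$ with $\rho^{k}:=\rt+\phi^{k}$, $u^{k}:=\ut+\psi^{k}$, proceed in two sub-steps. (i) Solve the \emph{linear} transport equation for $\phi^{k+1}$ along the characteristics of $u^{k}$; because $u^{k}(1,t)=u_b>0$ the characteristics enter $\Omega$ through $r=1$, so imposing $\phi^{k+1}(1,t)=0$ makes this problem well posed and $\phi^{k+1}$ is recovered by integrating an ODE along each characteristic. On a short time interval this also yields a uniform two-sided bound $\tfrac12\rho_+\le \rho^{k+1}\le 2\rho_+$, since along characteristics $\rho^{k+1}$ solves an ODE whose right-hand side is controlled by the $\mathcal{B}^{1}$-norm of $u^{k}$, so it cannot drift far from the values of $\rt$ in time $O(T)$. (ii) With $\rho^{k+1}$ now a known, strictly positive H\"older coefficient, solve the \emph{linear uniformly parabolic} initial--boundary value problem for $\psi^{k+1}$ on $[1,\infty)\times[0,T]$ with Dirichlet datum $\psi^{k+1}(1,t)=0$; parabolic Schauder theory together with the corner compatibility gives $\psi^{k+1}\in\mathcal{B}^{1+\sigma/2,\,2+\sigma}$, while the weighted $L^2$ energy identity for the momentum equation (multiply by $r^{n-1}\psi^{k+1}$ and integrate over $[1,\infty)$) controls the far field and produces the weighted $L^2$ bounds on $\psi^{k+1}$ and $\psi^{k+1}_r$; differentiating the transport equation gives the analogous bounds for $\phi^{k+1}$.

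The argument is then closed in two steps. First, a \emph{uniform bound}: combining the Schauder and transport estimates with the weighted $L^2$ energy estimates, there is $T_0>0$ depending only on $\|r^{\frac{n-1}{2}}(\rho_0-\rt)\|_{\mathcal{B}^{1+\sigma}}$ and $\|r^{\frac{n-1}{2}}(u_0-\ut)\|_{\mathcal{B}^{2+\sigma}}$ such that for $T\le T_0$ all iterates remain in a fixed ball of the solution space and satisfy the ellipticity bound on $\rho^{k}$; here one uses that every nonlinear contribution carries a positive power of $T$ and is absorbed for $T$ small. Second, a \emph{contraction}: writing the equations for the differences $(\phi^{k+1}-\phi^{k},\psi^{k+1}-\psi^{k})$ and performing the weighted $L^2$ energy estimate while invoking the uniform bounds, one gets contraction in $C\big([0,T];L^2(1,\infty)\big)$ (with the $r^{\frac{n-1}{2}}$ weight) for $T$ possibly smaller. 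Passing to the limit produces a solution of the nonlinear system; its full regularity $\rho\in\mathcal{B}^{1+\sigma/2,1+\sigma}$, $u\in\mathcal{B}^{1+\sigma/2,2+\sigma}$ and the $C([0,T];L^2)$ membership of the weighted quantities are recovered by lower semicontinuity of the norms and one last application of the linear estimates to the limiting equations. Uniqueness follows from the same weighted $L^2$ energy estimate applied to the difference of two solutions together with Gronwall's inequality.

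I expect the main obstacle to be running the parabolic Schauder estimates \emph{uniformly in space} on the unbounded domain $[1,\infty)$ while simultaneously keeping the $r^{\frac{n-1}{2}}$-weighted $L^2$ and $H^1$ norms under control: one must localise in $r$ with constants independent of the patch (or work in weighted parabolic H\"older spaces), glue the pieces using the energy identity that governs the far field, and exploit the compatibility conditions \eqref{compa} carefully to secure H\"older continuity of $\rho$ and $u$ up to the corner $(r,t)=(1,0)$. By contrast, keeping the density bounded away from vacuum for a short time is comparatively painless thanks to the transport structure of \eqref{nse1}, and the interior parabolic regularity of $u$ is classical once $\rho$ is frozen as a positive H\"older coefficient.
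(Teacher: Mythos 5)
The paper does not actually prove Lemma~\ref{lm:local}; it treats local well-posedness as a standard consequence of the Itaya/Tani/Kazhikhov theory and only indicates, in Proposition~\ref{prop:schauder} and the surrounding discussion, that the H\"older regularity comes from parabolic Schauder estimates as in \cite{friedman64}, \cite{lady}, and \cite{k-n-z03}. Your iteration scheme --- linearizing about the stationary profile, solving the continuity equation along characteristics that enter through $r=1$ (so the Dirichlet datum $\phi=0$ on the boundary is exactly what is needed), freezing the resulting positive H\"older density in a linear uniformly parabolic momentum equation handled by Schauder theory, controlling the far field with the $r^{\frac{n-1}{2}}$-weighted $L^2$ energy identity, and closing by uniform bounds plus contraction in the weaker $C([0,T];L^2)$ topology --- is precisely the standard machinery that the paper's citations point to, and it correctly identifies the genuine technical points (corner compatibility from \eqref{compa}, uniformity of the Schauder constants on the unbounded exterior domain, short-time lower bound on $\rho$ from the transport structure).
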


The energy norm for the a-priori estimate in Eulerian coordinate is defined by
\begin{gather}
    N_{\textrm{E}}(t) := \sup\limits_{0\le s \le t} \Big\{ \big\| r^{\frac{n-1}{2}}(\rho-\rt,u-\ut) \big\| + \big\| r^{\frac{n-1}{2}}(\rho-\rt)_r \big\| + \big\| r^{\frac{n-1}{2}}(u-\ut)_r \big\| \Big\},\label{NE}\\
    \begin{aligned}
        M_{\textrm{E}}^2(t) :=& \int_{0}^{t}\!\!  \big\| r^{\frac{n-3}{2}}(\rho-\rt)_r \big\|^2 + \big\| r^{\frac{n-1}{2}}(u-\ut)_r \big\|^2 + \big\| r^{\frac{n-3}{2}} (u-\ut)_{rr} \big\|^2\,  d\tau \\ &+ \int_{0}^{t} u_b\big|(u-\ut)_r(\tau,1)\big|^2 + u_b^3\big|(\rho-\rt)_r(\tau,1)\big|^2\, d\tau.
    \end{aligned}\label{ME}
\end{gather}
By Lemma \ref{lm:local}, for a given $\epsilon>0$ there exists a time $T(\epsilon)>0$ such that if $N_{E}(0) \le \epsilon$, then $N_{E}(t)\le 2\epsilon$ for all $t\in[0,T(\epsilon)]$. Combining this argument with the Sobolev embedding theorem and Lemma \ref{le:st}, we find constants $\ep_0=\ep_0(\rho_+,\mu,\gamma,K,n)>0$ and $T=T(\ep_0)>0$ such that if $|\rho_b-\rho_+| + u_b + N_{E}(0) \le \ep_0$ then 
\begin{equation}\label{priori-Rho}
    N_{E}(t)\le 2\ep_0, \qquad \tfrac{1}{2}\rho_{+} \le \rho(r,t) \le 2 \rho_+ \qquad \text{for all } \ (r,t)\in [1,\infty)\times[0,T].
\end{equation}
\subsection{Reformulation in Lagrangian Coordinate}\label{ssec:reform}
Let $(\rho,u)(r,t)$ be the local-in-time solution with the maximum time of existence $T>0$. Consider the transformation $(r,t)\mapsto X(r,t)\vcentcolon [1,\infty)\times[0,T]\to \R$ defined by
\begin{equation}\label{RInv}
	X(r,t) \vcentcolon= -m_b t + \int_{1}^{r} \rho(y,t) y^{n-1} d y \quad \text{ where } \quad m_b\vcentcolon= \rho_b u_b>0.
\end{equation}
Since $\frac{1}{2}\rho_{+} \le \rho(r,t)\le 2\rho_+$ in $(r,t)\in[1,\infty)\times[0,T]$, Inverse Function theorem implies that, for each $t\in[0,T]$ there exists a unique map $x\mapsto R(x,t) \vcentcolon [-m_b t, \infty)\to [1,\infty)$ such that $X(R(x,t),t)=x$. Namely,
\begin{subequations}\label{Rexist}
\begin{gather}
    x= -m_b t + \int_{1}^{R(x,t)} \rho(r,t) r^{n-1} d r \quad \text{for each $-m_b t \le  x$,}\label{Rmass}\\
     R(-m_b t,t)=1 \quad \text{ and } \quad R(x,t)\to \infty \ \text{ as } \ x\to\infty.\label{Rbdry}
\end{gather} 
\end{subequations}
Using the above construction, we set the Lagrangian space-time domain $\mQ(T)$ and the Lagrangian snapshot domain $S(t)$ at time $t\in[0,T]$ to be
\begin{subequations}\label{Ldomain}
\begin{align}
\mQ(T)\vcentcolon=& \{ (x,t)\in\R\times[0,T] \;|\; -m_b t \le x < \infty \}, \label{lsp} \\
S(t) \vcentcolon=& \{ x\in\R \;|\; -m_b t \le x < \infty \} \qu \text{for $t\in[0,T]$.} \label{lt}
\end{align}
\end{subequations}
By the Implicit Function theorem and the regularity of $(\rho,u)$, the derivatives $R_x$ and $R_t$ exist and it follows from \eqref{nse1} and \eqref{Rexist} that for an arbitrary $(x,t)\in\mQ(T)$
\begin{equation}\label{Rdiff}
	R_t(x,t) = u(R(x,t),t), \qu R_x(x,t) = \dfrac{R(x,t)^{1-n}}{\rho(R(x,t),t)}.
\end{equation}
Therefore $R(x,t)$ satisfies the following identity
\begin{equation}\label{R}
R(x,t)= R_0(x) + \int_{0}^{t} u(R(x,s),s)d s = \Big(1+n\int_{-m_b t}^{x} \dfrac{1}{\rho(R(y,t),t)}\,d y\Big)^{\frac{1}{n}},
\end{equation} 
where $R_0(x)$ is implicitly defined by 
\begin{equation*}
x = \int_{1}^{R_0(x)}\!\! \rho_0 (r) r^{n-1}\, d r \ \text{ for $0 \le x <\infty$.}
\end{equation*}
The transformation from the Eulerian coordinate $(r,t)$ to the Lagrangian coordinate $(x,t)$ is executed by the equation $r=R(x,t)$. Let $v \vcentcolon= 1/\rho$ be the specific volume. Using \eqref{Rdiff}, we obtain equations for $(\hat{u}, \hat{v})(x,t):=(u,v)(R(x,t), t)$ from \eqref{nse} as
\begin{subequations}
\label{nsl-hat}
\begin{gather}
\hv_t = (r^{n-1} \hu)_x, \label{nsl-hat1} \\
\hu_t = \mu r^{n-1} \Big( \frac{(r^{n-1} \hu)_x}{\hv} \Big)_x
- r^{n-1} p(\hv)_x, \label{nsl-hat2}
\end{gather}
\end{subequations}
where $p(v)=K v^{-\gamma}$ and $r=R(x,t)$. The initial and the boundary conditions
for $(\hv,\hu)$ are derived from \eqref{ub} as
\begin{subequations}\label{lc}
\begin{gather}
\hv(x,0) = \hv_0(x) := 1/\rho_0(R_0(x)), \qu \hu(x,0) = \hu_0(x) := u_0(R_0(x)), \label{lic-hat} \\
\hv_0(0)= v_b \equiv 1/ \rho_b, \qu \hu_0(0)=u_b, \label{lcompa-hat}\\
\hu(-m_b t,t) = u_b, \qu \hv(-m_b t,t)= v_b \equiv 1/\rho_b. \label{lbc-hat}
\end{gather}
\end{subequations}
Since the spatial variable in Eulerian coordinate, $r = R(x,t)$, is a function of spatial and temporal variables in the Lagrangian coordinate, the stationary solution $(\rt,\ut)$ also depends on $(x,t)$. For simplicity we abuse the notation to denote
\begin{gather}
    \rt(x,t):= \rt(R(x,t)), \qu \vt(x,t) := 1/\rt(R(x,t)), \qu \ut(x,t) := \ut(R(x,t)),\nonumber\\
    \rt_r(x,t) := \rt_r(R(x,t)), \qu \vt_r(x,t) := -\big(\frac{\rt_r}{\rt^2}\big)(R(x,t)), \qu \ut_r(x,t) := \ut_r(R(x,t)),\nonumber\\
    \tilde{\mL}(x,t)\vcentcolon= \mu \d_r\Big( \frac{\d_r(r^{n-1}\ut)}{r^{n-1}} \Big)\Big\vert_{r=R(x,t)} = \big\{ \rt \ut \d_r \ut + \d_r P(\rt) \big\}\big\vert_{r=R(x,t)}.\label{mL}
\end{gather}
With these constructions, we define the difference functions as 
\begin{equation*}
	\phi(x,t)\vcentcolon= \hv(x,t)-\vt(x,t), \qu \psi(x,t)\vcentcolon= \hu(x,t)-\ut(x,t).
\end{equation*}
Then $(\phi,\psi)$ satisfies the following equations in domain $(x,t)\in \mQ(T)$
\begin{subequations}\label{dLag}
    \begin{gather}
        \phi_t - (r^{n-1}\psi)_x = \mR_1,\label{dLag-1} \\
        \psi_t - \mu r^{n-1}\Big(\dfrac{(r^{n-1}\psi)_x}{v}\Big)_x + r^{n-1} p^{\prime}(v) \phi_x = \mR_2,\label{dLag-2}
    \end{gather}
    where
    \begin{gather}
    	\mR_1 := \dfrac{\vt_r \ut}{\vt} \phi - \vt_r \psi, \qu \mR_2 := \dfrac{\ut_r \ut}{\vt} \phi - \vt_r \dfrac{p^{\prime}(v)-p^{\prime}(\vt)}{v-\vt} v \phi - \ut_r \psi.
    \end{gather}
\end{subequations}
Moreover, boundary condition \eqref{ub} and \eqref{Rbdry} yield that
\begin{equation}\label{ldbc}
	\phi(-m_b t,t)=0, \qu \psi(-m_b t,t)=0, \qu \text{for } \ t\in[0,T].
\end{equation}
In what follows, we omit the hat ``\,$\hat{ \ }$\,'' to express 
functions in the Lagrangian coordinate simply as $(v,u)\equiv (\hv,\hu)$.

Under this formulation, we define the energy norm in Lagrangian coordinate as
\begin{gather}
    N(t) := \sup\limits_{0\le \tau \le t} \big\{ \| (\phi,\psi) \|_{L^{2}(S(\tau))} + \| r^{n-1} (\phi_x, \psi_x) \|_{L^{2}(S(\tau))} \big\},\label{Nt}\\
    \begin{aligned}
        M^2(t) :=& \iint_{\mQ(t)} \Big\{ \frac{\psi^2}{r^2} + r^{2(n-1)} \psi_x^2 + r^{2n-4} \phi_x^2 + r^{4n-6} \psi_{xx}^2 \Big\} \,dx d\tau \\
        &+ \int_{0}^{T} \big\{ u_b \psi_x^2 + u_b^3 \phi_x^2 \big\}(-m_b \tau, \tau) \, d\tau. 
    \end{aligned}\label{Mt}
\end{gather}
Let $T>0$ be the time obtained in the inequality \eqref{priori-Rho}. Using the definition \eqref{NE}--\eqref{ME} and the differential relations \eqref{Rdiff}, it is verified that there exists some constant $C(\rho_+)=C(\rho_+,\mu,\gamma,K,n)>0$ such that for $t\in[0,T]$,
\begin{equation} \label{NEL}
    C(\rho_{+})^{-1} N_{E} \le N \le C(\rho_{+}) N_{E}, \qu C(\rho_{+})^{-1} M_{E}  \le M \le C(\rho_{+}) M_{E}.
\end{equation}
Consequently, there are constants $(\ep_0,T)$ such that if $|\rho_b-\rho_+|+u_b + N_{E}(0) \le \ep_0$, then 
\begin{equation}\label{priori-Lag}
    N(t)\le 2\ep_0, \qquad \frac{1}{2\rho_{+}} \le v(x,t) \le \dfrac{2}{\rho_+} \quad \text{for } \ (x,t)\in \mQ(T).
\end{equation}

The main estimate of this section is stated in the following theorem.
\begin{theorem}\label{thm:Lag}
    Suppose $(\phi,\psi)$ solves \eqref{dLag} in $\mQ(T)$ for $T>0$ such that \eqref{priori-Lag} holds. Then there exist constants $\alpha=\alpha(\mu,\gamma,K,n)>0$ which is independent of $\rho_+$ and $\ep=\ep(\rho_+,\mu,\gamma,K,n)>0$, such that if $\rho_+\in (0,\alpha]$, $|\rho_b-\rho_+|\le u_b^2$ and $u_b< \ep$, then
    \begin{align*}
        \sup\limits_{0\le t\le T} N^2(t) + M^2(T) \le C(\rho_+) \Big\{ 1 + \frac{1}{u_b^2} \Big\} N^2(0),
    \end{align*}
    where $C(\rho_+)>0$ is a constant which depends only on $\rho_+$, $\mu$, $\gamma$, $K$ and $n$.
\end{theorem}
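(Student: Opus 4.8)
The plan is to prove Theorem~\ref{thm:Lag} by the energy method applied directly to the Lagrangian perturbation system \eqref{dLag}, keeping the dependence of every constant on $\rho_+$ and $u_b$ explicit. I would work on the moving domain $S(t)$ using $\tfrac{d}{dt}\int_{S(t)}f=\int_{S(t)}f_t+m_b f|_{x=-m_b t}$ (equivalently after the shift $y=x+m_b t$), together with the homogeneous boundary data \eqref{ldbc} throughout. The single most useful preliminary fact is a set of boundary identities: differentiating \eqref{ldbc} in $t$, and evaluating \eqref{dLag-1} at $x=-m_b t$ (where $r=1$ and $\mR_1=0$), one gets $\psi_t=m_b\psi_x$, $\phi_t=m_b\phi_x$ and $\psi_x=m_b\phi_x$ at $x=-m_b t$, so every boundary derivative is a multiple of $\psi_x(-m_b t,t)$, which up to the factor $m_b=\rho_b u_b$ equals $u_b\phi_x(-m_b t,t)$ --- precisely the quantities appearing in $M(T)$.

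\emph{Step 1 (basic estimate).} With the relative energy density $\tfrac12\psi^2+\Phi(v,\vt)$, $\Phi(v,\vt):=\int_{\vt}^{v}(p(\vt)-p(s))\,ds\sim\rho_+^{\gamma+1}\phi^2$ (by \eqref{priori-Lag}), I multiply \eqref{dLag-2} by $\psi$, \eqref{dLag-1} by $p(v)-p(\vt)$, add, and integrate over $S(t)$: the coupling term $r^{n-1}p'(v)\phi_x\psi$ cancels the one produced by integrating $(p(v)-p(\vt))(r^{n-1}\psi)_x$ by parts, and all boundary contributions vanish because $\phi=\psi=0$ there. This yields $\tfrac{d}{dt}\int_{S(t)}(\tfrac12\psi^2+\Phi)+\mu\int_{S(t)}v^{-1}((r^{n-1}\psi)_x)^2=(\text{errors})$, the errors bilinear in $(\phi,\psi)$ with coefficients bounded, via Lemma~\ref{le:st} and $|\rho_b-\rho_+|\le u_b^2$, by $C(\rho_+)u_b$ times decaying weights. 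Expanding $(r^{n-1}\psi)_x=r^{n-1}\psi_x+(n-1)r^{-1}v^{-1}\psi$ and integrating the cross term by parts extracts from the dissipation both $\int r^{2(n-1)}\psi_x^2$ and the Hardy-type $\int r^{-2}\psi^2$ of $M(t)$; the errors are absorbed using $N(t)\le 2\ep_0$ and smallness of $u_b$.

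\emph{Step 2 (first-order estimates and closing).} Testing \eqref{dLag-2} against $\psi_t$, the viscous term --- after integrating by parts in $x$ and substituting $(r^{n-1}\psi)_x=\phi_t-\mR_1$ --- yields $\tfrac{\mu}{2}\tfrac{d}{dt}\int v^{-1}((r^{n-1}\psi)_x)^2$, which controls $\|r^{n-1}\psi_x\|^2$, plus $\int\psi_t^2$ on the left; its boundary contribution, evaluated with $\psi_t|_{x=-m_b t}=m_b\psi_x|_{x=-m_b t}$ and $r=1$, is $\mu\rho_b^2 u_b\,\psi_x^2|_{x=-m_b t}=\mu\rho_b^4 u_b^3\,\phi_x^2|_{x=-m_b t}$ with the \emph{favourable} sign, giving exactly the boundary dissipation of $M(T)$. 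The estimate for $\iint r^{4n-6}\psi_{xx}^2$ is then purely algebraic, by solving \eqref{dLag-2} for $r^{n-1}(v^{-1}(r^{n-1}\psi)_x)_x$ and using the control of $\psi_t$, $\phi_x$ and lower-order terms. For $\iint r^{2n-4}\phi_x^2$ (and $\|r^{n-1}\phi_x\|^2$) I use a Kanel'-type estimate: rearranging \eqref{dLag-2} gives $r^{n-1}p'(v)\phi_x=\mu r^{n-1}(v^{-1}(r^{n-1}\psi)_x)_x-\psi_t+\mR_2$; testing this against a suitably weighted $\phi_x$, the pressure term $\int(-p'(v))(\text{weight})\phi_x^2$ has the good sign, the $\psi_t$-term is controlled by Young's inequality against the $\psi_t$-dissipation just obtained, and the viscous term --- again rewritten via $(r^{n-1}\psi)_x=\phi_t-\mR_1$ --- produces $\tfrac{d}{dt}\|(\text{weight})^{1/2}\phi_x\|^2$, a further favourable boundary dissipation, and remainders absorbed using $N(t)\le 2\ep_0$. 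Finally I add the basic estimate to small multiples of the two first-order estimates, chosen so that the sign-indefinite contribution $-\iint r^{2(n-1)}p'(v)\psi_x^2$ arising in these estimates is dominated by the unit-coefficient dissipation $\mu\iint v^{-1}((r^{n-1}\psi)_x)^2$ of Step~1 and all remaining errors --- each carrying $C(\rho_+)u_b$, $C(\rho_+)\ep_0$, or a small free parameter --- are absorbed into $M(t)$; collecting the data terms gives $\sup_{[0,T]}|N|^2+|M(T)|^2\le C(\rho_+)(1+u_b^{-2})|N(0)|^2$.

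The main obstacle is the boundary bookkeeping at $x=-m_b t$: one must show that every boundary term produced in the first-order and mixed estimates either vanishes by \eqref{ldbc}, or can be rewritten through $\psi_x(-m_b t,t)$ via the boundary identities and then dominated by the single favourable boundary dissipation above --- and, for the contributions of the inviscid part of \eqref{dLag-2}, this genuinely requires $u_b$ (hence $m_b^2=\rho_b^2 u_b^2$) small so that $m_b^2\psi_x^2$ is beaten by $\gamma K\rho_b^{\gamma+1}\psi_x^2$. This is the precise point at which the Lagrangian formulation is indispensable: in Eulerian variables the analogous boundary term $\mu(u-\ut)_r^2|_{r=1}$ carries no usable sign. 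The factor $u_b^{-2}$ in the final bound is forced here, since the $u_b$-weighted boundary and time integrations demanded by the inflow structure at $r=1$ must be balanced against unit-weight quantities. A secondary, but constant, burden is tracking the $\rho_+$- and $u_b$-dependence at each step: it is exactly the hypotheses $\rho_+\le\alpha$ and $|\rho_b-\rho_+|\le u_b^2$ that render the stationary-solution errors (through the decay rates of Lemma~\ref{le:st}) and the genuinely nonlinear terms absorbable into the dissipation.
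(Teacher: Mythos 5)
Your high-level strategy (work in the moving Lagrangian domain $S(t)$, exploit the boundary identities $\psi_t=m_b\psi_x$, $\phi_t=m_b\phi_x$, $\psi_x=m_b\phi_x$ at $x=-m_bt$, and close via a relative-energy estimate plus first-order estimates) is the same as the paper's, and your Step~1 matches the paper's Lemma~\ref{lem:REE}/Corollary~\ref{corol:REE} almost verbatim. The genuine gap is in Step~2, at precisely the point you yourself flag as ``the main obstacle.''

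When you derive a $\phi_x$-type energy estimate (in the paper this is packaged as the $\mF$-estimate, Lemma~\ref{lemma:Hx}, where $\mF=\mu\phi_x/v-\psi/r^{n-1}$ satisfies the damped scalar equation~\eqref{HEq}), the Leibniz boundary contribution is \emph{unfavourable}, not favourable. Since $\phi_x(-m_bt,t)=m_b^{-1}\psi_x(-m_bt,t)$ and hence $\mF(-m_bt,t)=\tfrac{\mu}{u_b}\psi_x(-m_bt,t)$ by~\eqref{Hbdry}, the boundary term produced by $\tfrac{d}{dt}\int_{S(t)}r^k\mF^2\,dx=\int_{S(t)}\d_t(r^k\mF^2)\,dx+m_b\,r^k\mF^2\big|_{x=-m_bt}$ has magnitude $m_b\cdot\tfrac{\mu^2}{u_b^2}\psi_x^2=\tfrac{\mu^2\rho_b}{u_b}\psi_x^2$, and it sits on the wrong side of the inequality (see~\eqref{leibH2} and the statement of Lemma~\ref{lemma:Hx}). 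You assert this contribution is ``a further favourable boundary dissipation'' and that all bad boundary terms are ``dominated by the single favourable boundary dissipation'' $\sim u_b\psi_x^2$ from the $\psi$-estimate. But the ratio of the bad term $\tfrac{1}{u_b}\psi_x^2$ to the good one $u_b\psi_x^2$ is $u_b^{-2}$, which diverges as $u_b\to 0$; so direct boundary absorption cannot close the estimate no matter how small $u_b$ is. You even guess the opposite comparison, ``$m_b^2\psi_x^2$ is beaten by $\gamma K\rho_b^{\gamma+1}\psi_x^2$,'' neither of which is the actual pair of competing boundary terms.

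What the paper does instead (Proposition~\ref{prop:psiSobolev} plus the crucial choice $\epsilon=(8C\mu\rho_bA_2)^{-1}u_b\omega$ in the proof of Lemma~\ref{lemma:2n-4}) is to \emph{trade} the boundary value $\psi_x^2(-m_bt,t)$ for an interior Sobolev bound: a small multiple of $\int r^{4n-6}\psi_{xx}^2/v$ (absorbed into the $\psi_{xx}$-dissipation of Lemma~\ref{lemma:psix}) plus a large multiple, of size $\sim u_b^{-2}$, of the dissipation $\mD(t)$ already controlled by the relative energy estimate. This trace-to-interior argument is the mechanism that produces the factor $1+u_b^{-2}$ in the final bound, and it is entirely absent from your outline. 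A related point you leave as a vague remark is the role of $\rho_+\le\alpha$: this enters specifically through the comparison $CA_3(\rho_+)A_1(\rho_+)\le\omega(\rho_+)/8$ in~\eqref{2n-4:temp1}--\eqref{2n-4:temp2}, which requires $\gamma>1$ so that $A_1A_3$ (which scales like a power of $\rho_+$ strictly greater than $\gamma$) is beaten by $\omega\sim\rho_+^\gamma$ for small $\rho_+$. Your choice of testing~\eqref{dLag-2} against $\psi_t$ (rather than $-r^k\psi_{xx}$) and a ``Kanel'-type'' estimate for $\phi_x$ instead of the clean $\mF$-equation are cosmetic differences that do not avoid either of these issues.
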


\subsection{Relative energy estimate}\label{ssec:REE}
Define the relative energy as
\begin{equation}
    \mE := \dfrac{|u-\ut|^2}{2} + G[v,\vt], \qu \text{where } \ G[v,\vt]:= \int_{1/\vt}^{1/v} \dfrac{P(z)-P(\vt^{-1})}{z^2} \,d z.
\end{equation}
For the isentropic pressure law $P(\rho)=K \rho^{\gamma}$, we have
\begin{equation*}
    G[v,\vt] = \left\{\begin{aligned}
        &\tfrac{K}{\gamma-1} (v^{1-\gamma}-\vt^{1-\gamma}) + K \vt^{-\gamma} (v-\vt)  && \text{if} \qu \gamma>1,\\
    &K\big( v/\vt - 1 -\log (v/\vt) \big) && \text{if} \qu \gamma =1.
    \end{aligned}\right.
\end{equation*}
Let $T>0$ be the existence time in \eqref{priori-Rho}. Then by Taylor's theorem, it holds that
\begin{equation}\label{Gvv}
     G[v,\vt] 
     \sim  \rho_{+}^{\gamma+1} \phi^2
     \quad \text{ and } \quad \mE[v,u] \sim ( \rho_{+}^{\gamma+1}\phi^2 + \psi^2)  \qquad \text{for } \ (x,t)\in \mQ(T).
\end{equation}

\begin{proposition}[Hardy-type inequality] \label{prop:hardy}
    Suppose $n\ge 2$ and $f(x,t): \mQ(T)\to \R$ is such that $f(-m_bt,t)=0$ for $t\in[0,T]$. Then there exists $C>0$ such that
    \begin{align*}
        \int_{S(t)} \dfrac{f^2}{r^{2n}} \, dx
        \le C \max\{1,\rho_{+}^2\} \int_{S(t)} f_x^2 \, dx.
    \end{align*}
\end{proposition}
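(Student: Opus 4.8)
The plan is to exploit the boundary condition $f(-m_bt,t)=0$ together with the geometric fact that $R(x,t)^n$ grows linearly in the mass variable $x$, as recorded in the identity \eqref{R}. Concretely, the plan is to fix $t$ and work on the snapshot $S(t)=[-m_bt,\infty)$. Since $f$ vanishes at the left endpoint $x=-m_bt$, I would write $f(x,t)=\int_{-m_bt}^{x} f_y(y,t)\,dy$ for each $x\in S(t)$, and then bound $f(x,t)^2$ by a Cauchy--Schwarz estimate against a weight adapted to the denominator $r^{2n}$. The natural weight comes from \eqref{R}: differentiating $r=R(x,t)$ gives $R_x=R^{1-n}/\rho = v R^{1-n}$, hence $(R^n)_x = n R^{n-1}R_x = n v \in [\,n/(2\rho_+),\,2n/\rho_+\,]$ by \eqref{priori-Lag}. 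In particular $R(x,t)^n$ is comparable to $1 + (x+m_bt)$ up to constants depending on $\rho_+$; this is exactly the monotone comparison function I want.

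The key steps, in order, would be: first, from $f(x,t)=\int_{-m_bt}^x f_y\,dy$ and Cauchy--Schwarz with the splitting $f_y = \big(f_y\, r(y,t)^{n}\big)\cdot r(y,t)^{-n}$, obtain
\begin{align*}
    f(x,t)^2 \le \Big(\int_{-m_bt}^x r(y,t)^{-2n}\,dy\Big)\Big(\int_{-m_bt}^x r(y,t)^{2n} f_y(y,t)^2\,dy\Big).
\end{align*}
Wait—this introduces an extra $r^{2n}$ weight on $f_y$, which is not what the statement wants. Instead, the cleaner route is the classical one-dimensional Hardy inequality on the half-line combined with a change of variables: set $\xi := R(x,t)^n$, so that $d\xi = n v\,dx \sim \rho_+^{-1}\,dx$ and $\xi$ ranges over $[1,\infty)$ as $x$ ranges over $S(t)$, with $f$ vanishing at $\xi=1$. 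Then $\int_{S(t)} f^2 r^{-2n}\,dx = \int_{S(t)} f^2 \xi^{-2}\,dx \sim \rho_+\int_1^\infty g(\xi)^2\,\xi^{-2}\,d\xi$ where $g(\xi):=f(x(\xi),t)$, and the standard Hardy inequality $\int_1^\infty g^2\xi^{-2}\,d\xi \le 4\int_1^\infty g_\xi^2\,d\xi$ (valid since $g(1)=0$) gives the bound. Finally, convert back: $g_\xi = f_x/(nv)$, so $\int_1^\infty g_\xi^2\,d\xi = \int_{S(t)} f_x^2/(n v)\,dx \sim \rho_+\int_{S(t)} f_x^2\,dx$. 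Chaining these comparabilities yields $\int_{S(t)} f^2 r^{-2n}\,dx \le C\rho_+^2 \int_{S(t)} f_x^2\,dx$, which is the claimed inequality (the $\max\{1,\rho_+^2\}$ in the statement just absorbs the case of small $\rho_+$ where the constant $1$ dominates, so the same argument suffices).

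The main obstacle I anticipate is making the change of variables $\xi = R(x,t)^n$ fully rigorous at the endpoint and handling the $t$-dependence uniformly: one must verify that the comparability constants in $d\xi \sim \rho_+^{-1} dx$ and $\xi \sim 1+(x+m_bt)$ are genuinely independent of $t\in[0,T]$, which follows from the uniform bounds $\tfrac1{2\rho_+}\le v\le \tfrac2{\rho_+}$ in \eqref{priori-Lag}, and that $g$ inherits enough regularity (absolute continuity in $\xi$) from $f$ to apply the scalar Hardy inequality — this is immediate from the regularity of $(\rho,u)$ assumed in Lemma \ref{lm:local}. A density/approximation argument reduces to smooth compactly supported $f$ if one prefers to avoid delicate endpoint considerations. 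Aside from this bookkeeping, the estimate is routine.
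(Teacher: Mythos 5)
Your argument is correct, and it takes a genuinely different route from the paper's. The paper does not change measure: it bounds the weight pointwise, using the lower bound on $r$ obtained from \eqref{R} and $v\ge 1/(2\rho_+)$ in \eqref{priori-Lag}, namely $r^{-2n}\le C\max\{1,\rho_+^2\}(1+x+m_b t)^{-2}$; it then performs the measure-preserving shift $z=1+x+m_b t$ (so that $F(z,t)\vcentcolon= f(z-m_b t-1,t)$ vanishes at $z=1$ and $F_z=f_x$) and establishes the scalar Hardy inequality $\int_1^\infty F^2 z^{-2}\,dz\le 4\int_1^\infty F_z^2\,dz$ directly via Minkowski's integral inequality. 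The entire factor $\max\{1,\rho_+^2\}$ is therefore carried by the pointwise bound on the weight, and the Jacobian of the shift is $1$. Your proof instead makes the genuine change of variables $\xi=R(x,t)^n$, which renders the weight exact, $r^{-2n}=\xi^{-2}$, and brings in the two-sided comparability $d\xi/dx=nv\sim\rho_+^{-1}$; you then apply the classical Hardy inequality in $\xi$ and convert back, with the two Jacobians each contributing a factor $\sim\rho_+$, arriving at $C\rho_+^2\int_{S(t)}f_x^2\,dx$. This needs both the upper and the lower bound on $v$ in \eqref{priori-Lag}, not just the lower one, but in exchange yields the sharper constant $C\rho_+^2\le C\max\{1,\rho_+^2\}$, strictly smaller when $\rho_+<1$, which is precisely the regime $\rho_+\in(0,\alpha]$ relevant to Theorem \ref{mth}; your parenthetical that the stated $\max\{1,\rho_+^2\}$ absorbs your $\rho_+^2$ is therefore right. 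The uniformity-in-$t$ and absolute-continuity points you flag are indeed covered by \eqref{priori-Lag} and Lemma \ref{lm:local}, as you say, and the abandoned first attempt with the $r^{2n}$ split does not affect the final argument.
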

\begin{proof}
    From \eqref{priori-Lag} and \eqref{R}, we have
    \begin{align*}
        r(x,t) = \Big( 1+ n \int_{-m_b t}^x v(y,t)\, dy \Big)^{\frac{1}{n}} 
        \ge C^{-1} \min\{1,\rho_{+}^{-1}\}^{\frac{1}{n}} (1+ x + m_b t)^{\frac{1}{n}}.
    \end{align*}
    Hence it follows from the change of variable $z:= 1+x+m_b t$ that
    \begin{align}
        \int_{S(t)} \dfrac{f^2}{r^{2n}} \, dx \le& C \max\{1,\rho_{+}^2\}\int_{S(t)} \frac{|f(x,t)|^2}{(1+x+m_b t)^{2}} \, dx\nonumber\\
        =& C\max\{1,\rho_{+}^2\}\int_{1}^{\infty} \frac{f^2}{z^2}\Big\vert_{x=z-m_bt-1} \, dz. \label{temp:minkow}
    \end{align}
    Define the function $F(z,t)\vcentcolon= f(z-m_b t-1,t)$. Then $F(1,t)=0$ for all $t\in[0,T]$ and $F_{z}(z,t) = f_x \vert_{x=z-m_b t-1}$. Applying Minkowski's integral inequality, we get
    \begin{align*}
        &\bigg(\int_{1}^{\infty} \frac{F^2}{z^2} \, d z \bigg)^{\frac{1}{2}}
        =\bigg(\int_{1}^{\infty}\!\!\bigg(\int_{0}^{1}\! \frac{z-1}{z} F_{z}\big( 1+(z-1)y,t\big)\, dy  \bigg)^2 d z\bigg)^{\frac{1}{2}}\\
         \le & \int_{0}^1 \bigg( \int_{1}^{\infty} \!  F_z^2 \big( 1+(z-1)y , t \big)\, dz \bigg)^{\frac{1}{2}}\, d y
         = 2 \bigg(\int_{1}^{\infty}\!\! F_z^2(z,t)\, dz \bigg)^{\frac{1}{2}}.
    \end{align*}
    Finally, using the above inequality in \eqref{temp:minkow} and rewriting in $f(x,t)$, we obtain that
    \begin{align*}
        \int_{S(t)} \dfrac{f^2}{r^{2n}} \, dx 
        \le C \max\{1,\rho_{+}^2\} \int_{1}^{\infty} \!\!F_{z}^2 (z,t)\, dz = C  \max\{1,\rho_{+}^2\} \int_{S(t)}\! f_x^2(x,t)\, dx.
    \end{align*}
    This concludes the proof.
\end{proof}

\begin{lemma}[Relative Energy Estimate] \label{lem:REE}
    Let $(v,u)$ solve \eqref{nsl-hat} in $\mQ(T)$ with $T>0$ such that \eqref{priori-Lag} holds. Then there exists a constant $\ep=\ep(\rho_+,\mu,\gamma,K,n)>0$ such that if $u_b< \ep$ then
	\begin{align*}
            &\dfrac{d}{d t}\!\int_{S(t)}\!\! \mE[v,u]\, d x + \mu\!\!\int_{S(t)}\!\! \Big\{ \dfrac{r^{2(n-1)}\psi_x^2}{v} + \dfrac{(n-1)v\psi^2}{2r^2}   \Big\}\, d x\\
            &\le C \tilde{A}_1(\rho_{+})\Big\{ u_b^2+|\rho_b-\rho_+| + \frac{|\rho_b-\rho_+|^2}{u_b^2} \Big\} \int_{S(t)}\!\!  \phi_x^2 \, d x,
	\end{align*}
	where
	\begin{equation*}
		\tilde{A}_1(\rho_{+}) \vcentcolon =\max\{1,\rho_{+}^2\} \cdot \max\{ \rho_+^{2\gamma+3}, \rho_{+}^{4\gamma-1}, \rho_{+}^{2\gamma}, \rho_{+}^{\gamma+2} \}.
	\end{equation*}
\end{lemma}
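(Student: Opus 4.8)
The plan is to derive the relative energy identity by multiplying the momentum equation \eqref{nsl-hat2} (or equivalently the difference equation \eqref{dLag-2}) by $\psi = u-\ut$, multiplying the mass balance by the appropriate pressure factor so that the terms combine into $\frac{d}{dt}\int_{S(t)}\mathcal{E}$, and then carefully bookkeeping every resulting term. First I would compute $\frac{d}{dt}\int_{S(t)} G[v,\vt]\,dx$: since the snapshot domain $S(t)$ has a moving left endpoint $-m_b t$ with speed $m_b$, there is a boundary contribution from Leibniz's rule, but at $x=-m_b t$ both $\phi$ and $\psi$ vanish by \eqref{ldbc}, so that endpoint term drops. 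Using $\phi_t = (r^{n-1}\psi)_x + \mathcal{R}_1$ from \eqref{dLag-1} and the definition of $G$, the time derivative of the potential part produces a flux term $\int_{S(t)} \partial_x\big(\cdots\big)$ plus remainder terms involving $\mathcal{R}_1$ and the spatial dependence of $\vt$ through $r=R(x,t)$.

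Next I would handle the kinetic part: multiply \eqref{dLag-2} by $\psi$ and integrate. The viscous term $-\mu r^{n-1}\big((r^{n-1}\psi)_x/v\big)_x\,\psi$ is integrated by parts; since $r=R(x,t)$ satisfies $R_x = r^{1-n}/\rho = r^{1-n} v$, the algebra gives $(r^{n-1}\psi)_x = r^{n-1}\psi_x + (n-1)r^{n-1}v\psi/ r\cdot(\text{something})$ — more precisely one uses $\partial_x(r^{n-1}) = (n-1)r^{n-2}R_x = (n-1)v$, so $(r^{n-1}\psi)_x = r^{n-1}\psi_x + (n-1)v\psi$. Substituting this and integrating by parts yields the two positive definite terms $\mu\int r^{2(n-1)}\psi_x^2/v$ and, from the cross terms, the positive term $\mu(n-1)\int v\psi^2/(2r^2)$ after using the lower bound $r\ge 1$ and absorbing a sign-indefinite cross term; this is where the factor $\frac12$ and the restriction to the leading order of $n-1$ enters. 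The boundary term from integration by parts at $x=-m_b t$ again vanishes since $\psi(-m_bt,t)=0$. The pressure coupling term $r^{n-1}p'(v)\phi_x\cdot\psi$ combines with the flux term coming from $\frac{d}{dt}\int G$ to cancel, up to lower-order corrections proportional to $p'(v)-p'(\vt)$ and to $\vt_x$, $\ut_x$.

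All the non-negative-definite "good" terms then sit on the left, and everything else is pushed to the right: these are (a) the remainder contributions built from $\mathcal{R}_1,\mathcal{R}_2$, which are linear in $(\phi,\psi)$ with coefficients $\vt_r, \ut_r, \ut_r\ut/\vt$, etc., and (b) the commutator-type terms coming from differentiating the $x$-dependent stationary profile. Using the decay estimates of Lemma \ref{le:st} (so $|\ut_r|, |\vt_r|$ decay like $r^{-n}$ and carry the explicit $\rho_+, u_b, |\rho_b-\rho_+|$ dependence) together with the hypothesis $|\rho_b-\rho_+|\le u_b^2$, every such term is bounded by $C\tilde A_1(\rho_+)\big\{u_b^2 + |\rho_b-\rho_+| + |\rho_b-\rho_+|^2/u_b^2\big\}$ times an integral of $\phi^2/r^{2n}$, $\psi^2/r^{2n}$, or $\psi^2/r^2$; the first two are converted to $\int \phi_x^2$ and (a small multiple of) $\int r^{2(n-1)}\psi_x^2$ via the Hardy inequality Proposition \ref{prop:hardy}, and the $\psi_x^2$ piece is absorbed into the good viscous term provided $u_b$ (and $\rho_+$) are small. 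The powers of $\rho_+$ that accumulate — from $G\sim\rho_+^{\gamma+1}\phi^2$, from $p'(v)\sim\rho_+^{\gamma+1}$, from $p''\sim\rho_+^{\gamma+2}$, from the $\max\{1,\rho_+^2\}$ in Hardy, and from the various $\rho_+$ factors in the decay rates \eqref{rho}--\eqref{ur} — are exactly what is collected into the definition of $\tilde A_1(\rho_+)$.

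The main obstacle I anticipate is the careful sign analysis of the cross terms produced by the viscous integration by parts together with the remainder term $-\ut_r\psi$ in $\mathcal{R}_2$: one must show that, after integration by parts, enough of the $\mu(n-1)\int v\psi^2/r^2$ term survives (the coefficient $\tfrac12$ in the statement signals that exactly half is kept) to dominate the indefinite contributions, rather than merely bounding things crudely. Getting the correct, and sharp enough, $\rho_+$-dependence in every constant — keeping $\alpha$ (hence the threshold on $\rho_+$) independent of $\rho_+$ itself, as required downstream in Theorem \ref{thm:Lag} — requires threading the explicit dependence through each step rather than absorbing it into a generic $C(\rho_+)$, and that bookkeeping, while routine, is the delicate part.
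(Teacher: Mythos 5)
Your plan follows essentially the same route as the paper's proof: compute $\tfrac{d}{dt}\int_{S(t)}\mathcal{E}$ using the Lagrangian equations, integrate by parts in the viscous term, note that the moving-endpoint Leibniz contribution and the spatial boundary flux both vanish because $\phi=\psi=0$ at $x=-m_bt$, and then bound the remaining $\tilde{\mathcal{L}}\psi\phi$, $\vt_r G$, and $\ut_r\psi^2$ contributions using Lemma~\ref{le:st}, Proposition~\ref{prop:hardy}, and smallness of $u_b$. Two details in your bookkeeping are off, however. First, $\partial_x(r^{n-1})=(n-1)r^{n-2}R_x=(n-1)v/r$, not $(n-1)v$; the missing $1/r$ matters because it is precisely what produces the $v\psi^2/r^2$ weight. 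Second, and more substantively, the factor $\tfrac12$ does not come from ``absorbing a sign-indefinite cross term'' in the viscous integration by parts: when you expand $(r^{n-1}\psi)_x^2/v$ and integrate the cross term $2(n-1)r^{n-2}\psi\psi_x$ by parts, you obtain $-(n-1)(n-2)\int v\psi^2/r^2$, which combines \emph{exactly} with the $(n-1)^2\int v\psi^2/r^2$ from the square of the last factor to give the full coefficient $(n-1)$, with no loss. The coefficient $\tfrac12$ in the statement is produced only later, when the paper spends $\tfrac{(n-1)\mu}{4}\int v\psi^2/r^2$ to absorb the $\tilde{\mathcal{L}}\psi\phi$ contribution (via Cauchy--Schwarz) and another $\tfrac{(n-1)\mu}{4}$ to absorb $\int|\ut_r|\psi^2$ using the decay $|\ut_r|\le Cu_br^{-n}$ and the smallness of $u_b$. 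Finally, note that Lemma~\ref{lem:REE} does not hypothesize $|\rho_b-\rho_+|\le u_b^2$; the bound is stated with all three quantities $u_b^2$, $|\rho_b-\rho_+|$, $|\rho_b-\rho_+|^2/u_b^2$ kept separate, and the extra hypothesis is only introduced in Corollary~\ref{corol:REE} and later.
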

\begin{proof}
Taking temporal derivative on $\mE$, it follows from equations \eqref{dLag} that
\begin{align*}
    &\mE[v,u]_t + \mu \dfrac{(r^{n-1}\psi)_x^2}{v} + \Big\{ r^{n-1}\psi \Big( p(v) - p(\vt) - \dfrac{(r^{n-1}\psi)_x}{v} \Big) \Big\}_x\\
    =& -(\gamma-1) m_b \dfrac{\vt_r}{r^{n-1}} G[v,\vt] - \ut_r \psi^2 + \tilde{\mL} \psi \phi,
\end{align*}
where $\tilde{\mL}(r)$ is given in \eqref{mL}. Integrating the above in $x \in [-m_b t,N]$ for some $N\in \mathbb{N}$, it follows from the boundary condition \eqref{ldbc} and 
the 
Leibniz integral rule that
\begin{align}
    &\dfrac{d}{d t}\int_{-m_b t}^{N}\!\!\! \mE[v,u] \, dx + \mu\int_{-m_b t}^{N}\!\!\dfrac{(r^{n-1}\psi)_x^2}{v}\, dx d\tau \nonumber\\
    =& \!\Big\{ r^{n-1}\psi \Big( \dfrac{(r^{n-1}\psi)_x}{v} - p(\vt) +p(v) \Big)  \Big\}\Big\vert_{x=N} \nonumber\\
    &- \int_{-m_b t}^{N}\!\! \Big\{ (\gamma-1)m_b \dfrac{\vt_r}{r^{n-1}} G[v,\vt] + \ut_r \psi^2 -\tilde{\mL} \psi \phi \Big\} \, d x . \label{mE1}
\end{align}
From Lemma \ref{lm:local}, we get that $r^{n-1}\psi(\cdot,t) \in H^1(S(t))$ uniformly in $t\in[0,T]$. Thus $r^{n-1}\psi(x,t)\to 0$ as $x\to \infty$ for each $t\in[0,T]$. Also, $r^{n-1}\psi_{x} \in L^{\infty}(\mQ(T))$. Consequently, we obtain the limit
\begin{align*}
\lim\limits_{N\to\infty} \!\Big\{ r^{n-1}\psi \Big( \dfrac{(r^{n-1}\psi)_x}{v} - p(\vt) +p(v) \Big)  \Big\}\Big\vert_{x=N}= 0.
\end{align*}
Taking limit $N\to\infty$ in \eqref{mE1} and rewriting $(r^{n-1}\psi)_x^2$ using the relation \eqref{Rdiff}, we get
\begin{align}
    &\dfrac{d}{dt}\int_{S(t)}\!\! \mE[v,u] \, dx  + \mu\int_{S(t)}\!\!\Big\{ \dfrac{r^{2(n-1)}\psi_x^2}{v} + (n-1) \dfrac{v \psi^2}{r^2} \Big\}\, dx \nonumber\\
    =& \int_{S(t)}\!\! \Big\{ \tilde{\mL} \psi \phi -(\gamma-1)m_b \dfrac{\vt_r}{r^{n-1}} G[v,\vt] - \ut_r \psi^2 \Big\} \, d x =\vcentcolon \textrm{(I)}+\textrm{(II)}+\textrm{(III)}. \label{mE2}
\end{align}
By Lemma \ref{le:st}, it is verified that
\begin{align*}
    |\tilde{\mL}(r)| 
    \le & C r^{-2n+1} \rho_{+}^{\gamma+1} \Big\{ \rho_{+}^{\gamma-2}\frac{|\rho_b-\rho_{+}|}{u_b} +  u_b \Big\}. 
\end{align*}
Using this, \eqref{priori-Lag} and Cauchy-Schwarz's inequality, we get
\begin{align*}
    | \textrm{(I)} | \le& \dfrac{(n-1)\mu}{4} \int_{S(t)} \dfrac{v \psi^2}{r^2} \, dx  + \dfrac{1}{(n-1)\mu} \int_{S(t)} |\tilde{\mL}|^2\dfrac{r^2\phi^2}{v}\, dx \\
    \le& \dfrac{(n-1)\mu}{4} \int_{S(t)} \dfrac{v \psi^2}{r^2} \, dx + C \rho_{+}^{2\gamma+3} \Big\{ u_b^2 + \rho_{+}^{2\gamma-4} \frac{|\rho_b-\rho_+|^2}{u_b^2}\Big\} \int_{S(t)} \frac{\phi^2}{r^{4n-2}}\, dx.
\end{align*}
Using \eqref{Gvv} and the decay estimate \eqref{rho} we get
\begin{align*}
    |\textrm{(II)}| =& \Big|  (\gamma-1)m_b \int_{S(t)} \dfrac{\vt_r}{r^{n-1}} G[v,\vt] \, d x\Big| 
    \le C \rho_{+}^{2\gamma}\big\{ |\rho_b-\rho_+| + \rho_{+}^{2-\gamma} u_b^2 \big\}\int_{S(t)} \frac{\phi^2}{r^{3n-2}}\, dx.
\end{align*}
Using the estimate $|\ut_r(r)|\le C u_b r^{-n}$ from Lemma \ref{le:st} and (\ref{priori-Lag}), we get
\begin{align*}
    |\textrm{(III)}| = \Big| \int_{S(t)} \ut_r \psi^2 \, dx \Big| \le 2 C \rho_+ u_b \iint_{S(t)} \dfrac{v \psi^2}{r^n}\, dx.
\end{align*}
By choosing the upper bound for $u_b$, we obtain that
\begin{align*}
    |\textrm{(III)}| \le \dfrac{(n-1)\mu}{4}\int_{S(t)} \dfrac{v \psi^2}{r^2}\, dx \qquad \text{if } \ u_b \le \dfrac{(n-1)\mu}{8 C \rho_+}.
\end{align*}
Substituting the estimates for $\textrm{(I)}-\textrm{(III)}$ into \eqref{mE2}, we obtain
\begin{align*}
    &\dfrac{d}{d t}\int_{S(\tau)}\!\! \mE[v,u](x,\tau) \, dx + \mu\int_{S(t)}\!\!\Big\{ \dfrac{r^{2(n-1)}\psi_x^2}{v} + \dfrac{n-1}{2} \dfrac{v \psi^2}{r^2} \Big\}\, dx \\ 
    \le&   C \max\{ \rho_+^{2\gamma+3}, \rho_{+}^{4\gamma-1}, \rho_{+}^{2\gamma}, \rho_{+}^{\gamma+2} \} \Big\{ u_b^2 + |\rho_b-\rho_+| +\frac{|\rho_b-\rho_+|^2}{u_b^2} \Big\} \int_{S(t)} \frac{\phi^2}{r^{3n-2}}\, dx\\
    \le&  C A_1(\rho_{+}) \Big\{ u_b^2+ |\rho_b-\rho_{+}| +\frac{|\rho_b-\rho_+|^2}{u_b^2} \Big\} \int_{S(t)}\!\! \phi_x^2\, dx, 
\end{align*}
where in the last line we used the fact that $2n \le 3n-2$ and Proposition \ref{prop:hardy}.
\end{proof}

\subsection{\texorpdfstring{$H^1$}{H1}-estimates of \texorpdfstring{$\phi$}{phi}}\label{ssec:H1phi}
We define the term
\begin{equation*}
    \mF:= \mu \dfrac{\phi_x}{v} - \frac{\psi}{r^{n-1}}.
\end{equation*}
Then from equations \eqref{st} and \eqref{nsl-hat}, it is verified that $\mF$ satisfies
\begin{equation}\label{HEq}
    \mF_t + \frac{\gamma K}{\mu} \dfrac{\mF}{v^{\gamma}} = (n-1) \frac{\psi^2}{r^n} - \gamma \frac{p(v)-p(\vt)}{v-\vt} \frac{\vt_r}{r^{n-1}} \phi + q \frac{\psi}{r^{n-1}},
\end{equation}
where
\begin{equation*}
	q:= - \frac{\gamma K}{\mu} v^{-\gamma} + \frac{(r^{n-1}\ut)_r}{r^{n-1}}  - \frac{\mu}{\rho_b u_b} r^{n-1} \Big( \frac{(r^{n-1}\ut)_r}{r^{n-1}} \Big)_r.
\end{equation*}
Since the equation \eqref{dLag-1} holds at the boundary $\{x=-m_b t\}$, it follows from the condition \eqref{ldbc} that $\phi_t(-m_b t, t) = \psi_x(-m_b t, t) $. Taking derivative on the equation $\phi(-m_b t,t)=0$, it holds that $ \phi_x(-m_b t, t) = m_b^{-1} \phi_t (-m_b t, t) = m_b^{-1} \psi_x (-m_b t,t)$. Therefore $\mF$ satisfies the boundary condition
\begin{equation}\label{Hbdry}
    \mF(-m_b t, t) = \frac{\mu}{v_b} \phi_x(-m_b t,t) = \frac{\mu}{u_b} \psi_x(-m_bt,t).
\end{equation}
In addition, by \eqref{Nt} and \eqref{priori-Lag}, we verify that
\begin{equation}\label{HN}
    \int_{S(t)} r^{2(n-1)} \mF^2(x,t) \, dx \le C(\rho_{+})N^2(t), \qquad \text{for } \ t\in[0,T].
\end{equation}
Rewriting the estimate of Lemma \ref{lem:REE} in terms of $\mF$ and choosing $u_b$ and $|\rho_b-\rho_+|$ small enough, the following corollary is easily verified
\begin{corollary}\label{corol:REE}
     Suppose $(v,u)$ is a solution to \eqref{nsl-hat} in $\mQ(T)$ for $T>0$ such that \eqref{priori-Lag} holds. Then there exists $\ep=\ep(\rho_+,\mu,\gamma,K,n)>0$ such that if $|\rho_b-\rho_+|\le u_b^2$ and $u_b< \ep$ then
     \begin{align*}
     &\dfrac{d}{d t}\!\int_{S(t)}\!\!\!\! \mE[v,u]\, d x + \mu\!\!\int_{S(t)}\!\! \Big\{ \dfrac{r^{2(n-1)}\psi_x^2}{v} + \dfrac{(n-1)v\psi^2}{4 r^2}   \Big\}\, d x\\ \le& C A_1(\rho_{+}) \Big\{u_b^2+|\rho_b- \rho_+| + \frac{|\rho_b-\rho_{+}|^2}{u_b^2}\Big\} \int_{S(t)} \!\! \mF^2\, d x, 
     \end{align*}
     where
     \begin{equation}
     A_1(\rho_{+}) \vcentcolon= \rho_{+}^{-2}\tilde{A}(\rho_{+}) =\max\{1,\rho_{+}^2\} \cdot \max\{ \rho_+^{2\gamma+1}, \rho_{+}^{4\gamma-3}, \rho_{+}^{2\gamma-2}, \rho_{+}^{\gamma} \}. \label{A1}
     \end{equation}
\end{corollary}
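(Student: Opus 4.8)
The plan is to derive the corollary directly from Lemma~\ref{lem:REE}, by converting the right-hand side integrand $\phi_x^2$ into $\mF^2$ at the cost of an extra factor $\rho_{+}^{-2}$ --- which is exactly what distinguishes $\tilde{A}_1(\rho_{+})$ from $A_1(\rho_{+})$ --- together with a $\psi^2$-remainder that will be absorbed into the dissipation term on the left.

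First I would invert the definition $\mF=\mu\phi_x/v-\psi/r^{n-1}$ to write $\phi_x=(v/\mu)\bigl(\mF+\psi/r^{n-1}\bigr)$, hence $\phi_x^2\le(2v^2/\mu^2)\bigl(\mF^2+\psi^2/r^{2(n-1)}\bigr)$. Using $\tfrac{1}{2\rho_{+}}\le v\le\tfrac{2}{\rho_{+}}$ from \eqref{priori-Lag}, so that $v^2\le 4\rho_{+}^{-2}$, this gives $\phi_x^2\le\bigl(8/(\mu^2\rho_{+}^2)\bigr)\bigl(\mF^2+\psi^2/r^{2(n-1)}\bigr)$. Since $n\ge2$ and $r=R(x,t)\ge1$ throughout $\mQ(T)$ by \eqref{R}, we have $r^{2(n-1)}\ge r^2$, so $\psi^2/r^{2(n-1)}\le\psi^2/r^2$; and $v\ge\tfrac{1}{2\rho_{+}}$ gives $\psi^2/r^2\le 2\rho_{+}\,v\psi^2/r^2$. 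Substituting these into $\int_{S(t)}\phi_x^2\,dx$ and then into the right-hand side of Lemma~\ref{lem:REE}, the $\mF^2$ contribution is precisely $C A_1(\rho_{+})\bigl\{u_b^2+|\rho_b-\rho_{+}|+|\rho_b-\rho_{+}|^2/u_b^2\bigr\}\int_{S(t)}\mF^2\,dx$ with $A_1=\rho_{+}^{-2}\tilde{A}_1$ (the factor $8/\mu^2$ absorbed into $C$), while the $\psi^2$ contribution is bounded by $C A_1(\rho_{+})\rho_{+}\bigl\{u_b^2+|\rho_b-\rho_{+}|+|\rho_b-\rho_{+}|^2/u_b^2\bigr\}\int_{S(t)}v\psi^2/r^2\,dx$.

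To close, I would invoke the hypothesis $|\rho_b-\rho_{+}|\le u_b^2$, which gives $u_b^2+|\rho_b-\rho_{+}|+|\rho_b-\rho_{+}|^2/u_b^2\le 3u_b^2$, so the $\psi^2$-remainder is at most $3C A_1(\rho_{+})\rho_{+}u_b^2\int_{S(t)}v\psi^2/r^2\,dx$. Choosing $\ep=\ep(\rho_+,\mu,\gamma,K,n)>0$ small enough that $3C A_1(\rho_{+})\rho_{+}u_b^2\le\mu(n-1)/4$ whenever $u_b<\ep$, this remainder is absorbed into half of the dissipation term $\mu(n-1)v\psi^2/(2r^2)$ on the left of Lemma~\ref{lem:REE}, leaving $\mu(n-1)v\psi^2/(4r^2)$ and yielding exactly the stated inequality. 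The computation is entirely elementary; the only point demanding care is bookkeeping the powers of $\rho_{+}$: the gain $\rho_{+}^{-2}$ coming from $v^2\le4\rho_{+}^{-2}$ is precisely what turns $\tilde{A}_1$ into $A_1$, and absorbing the $\psi^2$-remainder costs one further power of $\rho_{+}$, so the admissible $\ep$ necessarily depends on $\rho_{+}$. I do not expect any genuine obstacle here.
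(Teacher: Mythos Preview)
Your argument is correct and matches the paper's approach: the paper does not spell out a proof but simply says the corollary follows by ``rewriting the estimate of Lemma~\ref{lem:REE} in terms of $\mF$, and choosing $u_b$ and $|\rho_b-\rho_+|$ small enough,'' which is exactly the inversion $\phi_x=(v/\mu)(\mF+\psi/r^{n-1})$ plus absorption that you carry out. The $\rho_{+}^{-2}$ bookkeeping turning $\tilde{A}_1$ into $A_1$ and the absorption of the $\psi^2$-remainder via the hypothesis $|\rho_b-\rho_+|\le u_b^2$ are handled correctly.
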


\begin{lemma}\label{lemma:Hx}
Set $k:= 2(n-2)$ or $2(n-1)$. Suppose $(v,u)$ is a solution to (\ref{nsl-hat}) in $\mQ(T)$ for $T>0$ such that \eqref{priori-Lag} holds. Then there exists $\ep=\ep(\rho_+,\mu,\gamma,K,n)>0$ such that if $|\rho_b-\rho_+|\le u_b^2$ and $u_b< \ep$ then
\begin{align*}
    &\dfrac{d }{d t} \int_{S(t)}\!\! r^k \mF^2\, d x + \omega(\rho_+)\int_{S(t)}\!\! r^{k}\mF^2 \, d x \\
    \le &  \dfrac{\mu^2 \rho_b}{u_b} \psi_x^2(-m_b t,t) + C(\rho_+)\bigg\{\int_{S(t)} r^{k-2(n-1)} v\psi^2 \, dx + \mD(t)\bigg\}, 
\end{align*}
where
\begin{equation}
	\omega(\rho_{+}) \vcentcolon= \frac{\gamma K }{2^{\gamma}\mu} \rho_+^{\gamma}, \qquad \mD(t):= \int_{S(t)}\!\! \Big\{ \dfrac{r^{2(n-1)}}{v}\psi_x^2 + \dfrac{v \psi^2 }{r^2} \Big\}\, dx .  \label{mD}
\end{equation}
\end{lemma}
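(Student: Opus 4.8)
The plan is to carry out a weighted spatial energy estimate for the first-order transport equation \eqref{HEq} satisfied by $\mF$. I would multiply \eqref{HEq} by $2r^k\mF$ and integrate over $S(t)$, taking care of two points: the inner boundary $\{x=-m_bt\}$ moves, and $r=R(x,t)$ depends on $t$ with $R_t=u$ by \eqref{Rdiff}. The Leibniz and product rules give
\begin{equation*}
\frac{d}{dt}\int_{S(t)} r^k\mF^2\,dx = \int_{S(t)}\!\big(k\,r^{k-1}u\,\mF^2 + 2r^k\mF\,\mF_t\big)\,dx + m_b\,\big(r^k\mF^2\big)\big\vert_{x=-m_bt},
\end{equation*}
with no contribution at $x=+\infty$ because of the decay furnished by \eqref{HN} and the a-priori regularity. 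Since $R(-m_bt,t)=1$ by \eqref{Rbdry} and $\mF(-m_bt,t)=\tfrac{\mu}{u_b}\psi_x(-m_bt,t)$ by \eqref{Hbdry}, the boundary term equals precisely $\tfrac{\mu^2\rho_b}{u_b}\psi_x^2(-m_bt,t)$, the first term on the right of the claim. Inserting $\mF_t$ from \eqref{HEq} and using $v^{-\gamma}\ge(\rho_+/2)^\gamma$, which follows from \eqref{priori-Lag}, the damping term obeys $2r^k\mF\cdot\!\big({-}\tfrac{\gamma K}{\mu}v^{-\gamma}\mF\big)\le -2\omega(\rho_+)\,r^k\mF^2$; this produces $\omega(\rho_+)\int r^k\mF^2\,dx$ on the left together with a spare copy of $\omega(\rho_+)\int r^k\mF^2\,dx$ to absorb small terms.

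It then remains to estimate the transport term $\int k\,r^{k-1}u\,\mF^2\,dx$ and the three forcing terms of \eqref{HEq}, each multiplied by $2r^k\mF$. For the transport term I would use $|u|\le\|\psi\|_{L^\infty}+\|\ut\|_{L^\infty}\le C\ep_0$ — the first bound from the one-dimensional interpolation $\|\psi\|_{L^\infty}^2\le C\|\psi\|_{L^2}\|\psi_x\|_{L^2}$ (valid since $\psi(-m_bt,t)=0$) combined with \eqref{priori-Lag}, the second from \eqref{rho} — and $r^{k-1}\le r^k$, so this term is $\le C\ep_0\int r^k\mF^2\,dx$ and is absorbed once $\ep_0$ is small. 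The forcing term coming from $(n-1)\psi^2/r^n$ is treated by Young's inequality together with $\psi^2\le\|\psi\|_{L^\infty}^2$, after which the weight comparison $r^{k-2n}\le r^{-2}$ (this uses $k\le2(n-1)$) and $r^{-2}\le 2\rho_+\,v\,r^{-2}$ give a bound $\delta\int r^k\mF^2\,dx + C(\rho_+)\mD(t)$. The forcing term coming from $q\psi/r^{n-1}$ requires first checking, from the formula for $q$ — using $v^{-\gamma}\le(2\rho_+)^\gamma$, the decay bounds \eqref{rho}--\eqref{ur} for $\ut,\ut_r$, the bound $|\tilde{\mL}|\le C\rho_+^{\gamma+1}r^{-2n+1}\{\rho_+^{\gamma-2}|\rho_b-\rho_+|/u_b+u_b\}$, and the hypothesis $|\rho_b-\rho_+|\le u_b^2$ — that $|q|\le C(\rho_+)$; then Young's inequality and $r^{k-2(n-1)}\le2\rho_+\,v\,r^{k-2(n-1)}$ give $\delta\int r^k\mF^2\,dx + C(\rho_+)\int r^{k-2(n-1)}v\psi^2\,dx$, which is the last term on the right of the lemma.

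The delicate forcing term is the pressure-type one $-\gamma\tfrac{p(v)-p(\vt)}{v-\vt}\tfrac{\vt_r}{r^{n-1}}\phi$. I would bound its coefficient by $\big|\tfrac{p(v)-p(\vt)}{v-\vt}\big|\le C\rho_+^{\gamma+1}$ (mean value theorem and the bounds on $v,\vt$) and $|\vt_r|\le C(\rho_+)u_b\,r^{-2n+1}$ (from \eqref{ur} with $|\rho_b-\rho_+|\le u_b^2$), so that Young's inequality yields $\delta\int r^k\mF^2\,dx + C(\rho_+)u_b^2\int r^{k-6n+4}\phi^2\,dx$. The crucial observation — and this is where the admissible exponents $k\in\{2(n-2),2(n-1)\}$ enter, via $k\le 4n-4$ — is that $r^{k-6n+4}\le r^{-2n}$, so the Hardy-type inequality of Proposition \ref{prop:hardy} (using $\phi(-m_bt,t)=0$) bounds the integral by $C(\rho_+)\int\phi_x^2\,dx$; then, solving the definition of $\mF$ for $\phi_x=\tfrac{v}{\mu}\big(\mF+\tfrac{\psi}{r^{n-1}}\big)$ and using $v\le 2/\rho_+$, $n\ge2$ and $k\ge0$, this is $\le C(\rho_+)\big(\int r^k\mF^2\,dx + \mD(t)\big)$. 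Hence this term contributes $\delta\int r^k\mF^2\,dx + C(\rho_+)u_b^2\int r^k\mF^2\,dx + C(\rho_+)\mD(t)$. Collecting all the pieces, choosing $\delta$ small and then $u_b,\ep_0$ small (depending only on $\rho_+,\mu,\gamma,K,n$) so that the total coefficient of $\int r^k\mF^2\,dx$ on the right is at most $\omega(\rho_+)$, gives the asserted inequality. The main obstacle is precisely this last term: it is the only forcing involving $\phi$, and the chain "Young's inequality $\to$ weight comparison $\to$ Hardy $\to$ re-expressing $\phi_x$ through $\mF$" is exactly what forces both the restriction on $k$ and the quadratic smallness $|\rho_b-\rho_+|\le u_b^2$.
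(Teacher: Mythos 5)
Your proposal is correct and follows essentially the same route as the paper's proof of Lemma~\ref{lemma:Hx}: multiply \eqref{HEq} by a constant times $r^{k}\mF$, integrate over $S(t)$, invoke Leibniz's rule (whose boundary contribution, via \eqref{Hbdry} and $r(-m_bt,t)=1$, produces exactly the term $\tfrac{\mu^2\rho_b}{u_b}\psi_x^2(-m_bt,t)$), lower-bound $v^{-\gamma}$ by $2^{-\gamma}\rho_+^{\gamma}$ to generate $\omega(\rho_+)$, and then estimate the transport term and the three forcings of \eqref{HEq}, absorbing small multiples of $\int r^k\mF^2/v^{\gamma}$. Your treatment of the $\phi$-forcing — mean value theorem for $p$, the decay of $\vt_r$ from \eqref{ur} together with $|\rho_b-\rho_+|\le u_b^2$, the weight comparison to $r^{-2n}$, Proposition~\ref{prop:hardy}, and finally re-expressing $\phi_x$ through $\mF$ and $\psi/r^{n-1}$ — matches the paper's chain for $I_4$ step by step; the only differences are cosmetic (factor of $2$ in the test function, and whether the damping is kept in the $v^{-\gamma}$-weighted form or converted to $\rho_+^{\gamma}$ before absorbing).
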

\begin{proof}
    Multiplying both sides of \eqref{HEq} with $r^{k}\mF$, then integrating by parts in $x\in S(t)$, we obtain
    \begin{align}
        &\frac{1}{2}\int_{S(t)}\!\! \d_t(r^k \mF^2)\, dx + \dfrac{\gamma K}{\mu} \int_{S(t)}\!\! \dfrac{r^k \mF^2}{v^{\gamma}}\, dx \nonumber\\
        =& \frac{k}{2}\int_{S(t)}\!\! r^{k-1}(\psi+\ut)\mF^2 \,dx + (n-1) \int_{S(t)}\!\! r^{k-n}\psi^2 \mF \, dx\nonumber\\
        & + \int_{S(t)}\!\!  r^{k-n+1} q \psi \mF \, dx - \gamma \int_{S(t)}\!\!  \dfrac{p(v)-p(\vt)}{v-\vt}\vt_r r^{k-n+1}\phi \mF \, dx =\vcentcolon \sum_{i=1}^4 I_i. \label{rmH2}
    \end{align}
    By Leibniz integral rule and the boundary condition \eqref{Hbdry}, we get
    \begin{align}
        \int_{S(t)} \d_t\big(\dfrac{r^k \mF^2}{2}\big)(x,t) \, dx 
        =& -\dfrac{\mu^2 \rho_b}{2 u_b} \psi_x^2 \Big\vert_{x=-m_b t} + \dfrac{d}{dt} \int_{S(t)}\!\! \dfrac{r^k \mF^2}{2}(x,t) \, dx. \label{leibH2}
    \end{align}
    By the decay estimate \eqref{ur}, inequality \eqref{priori-Rho} and Cauchy-Schwarz's inequality, there exist a constant $\ep=\ep(\rho_+,\mu,\gamma,K,n)>0$ such that if $u_b + N(t) \le \ep$ for $t\in[0,T]$, then
    \begin{align*}
        I_1 
        \le& \dfrac{\gamma K}{16 \mu} \int_{S(t)}\!\! \dfrac{r^k \mF^2}{v^{\gamma}}\,dx  + \frac{C}{\rho_{+}^{2\gamma}} \|\psi^2(\cdot,t)\|_{L^{\infty}(S(t))}  \int_{S(t)}\!\! \dfrac{r^{k} \mF^2}{v^{\gamma}}\, dx \le \dfrac{\gamma K}{8 \mu} \int_{S(t)}\!\! \dfrac{r^k \mF^2}{v^{\gamma}}\,dx,
        \end{align*}
        where we
        have
        used the Sobolev embedding theorem, $\|\psi^2(\cdot,t)\|_{L^{\infty}(S(t))} \le C(\rho_{+}) N(t)$. By the same argument, we also obtain
        \begin{align*}
        I_2 
        \le & \dfrac{\gamma K}{8 \mu} \int_{S(t)}\!\! \dfrac{r^{k}\mF^2}{v^{\gamma}}\, dx + C(\rho_{+}) \int_{S(t)}\!\! \dfrac{v\psi^2}{r^2} \, dx.
    \end{align*}
    Next, the conditions \eqref{urg}, \eqref{priori-Rho} and Lemma \ref{le:st} imply that if $|\rho_b-\rho_+|\le u_b^2$ then $\|q\|_{L^{\infty}(\mQ(T))} \le C(\rho_{+})$.
    Using this, we obtain
    \begin{align*}
        I_{3} 
        \le& \dfrac{\gamma K}{8\mu}\int_{S(t)}\!\!\dfrac{r^k \mF^2}{v^{\gamma}}\,dx + C(\rho_{+}) \int_{S(t)}\!\! r^{k-2n+2} v \psi^2\, dx.
    \end{align*}
    Finally, by \eqref{priori-Rho}, the decay estimate \eqref{rho} and the mean value theorem, we have
    \begin{align*}
        I_4 
        \le& \dfrac{\gamma K}{16 \mu} \int_{S(t)}\!\! \dfrac{r^{k}\mF^2}{v^{\gamma}}\, dx + C\Big\{ \rho_{+}^{\gamma+2} u_b^2 + \rho_{+}^{3\gamma-2}\dfrac{|\rho_b-\rho_+|^2}{u_b^2} \Big\} \int_{S(t)}   \frac{\phi^2}{r^{4n-2}}\, dx.
    \end{align*}
    Applying Proposition \ref{prop:hardy} and $|\rho_b-\rho_{+}| \le u_b^2$, we obtain that 
    \begin{align*}
        I_4 
        \le& \dfrac{\gamma K}{16 \mu} \int_{S(t)}\!\! \dfrac{r^{k}\mF^2}{v^{\gamma}}\, dx + C(\rho_{+}) u_b^2\int_{S(t)} \phi_x^2\, dx.
    \end{align*}
    Since $\mF\vcentcolon= \mu v^{-1}\phi_x - r^{1-n}\psi$, by \eqref{priori-Rho} and the triangular inequality, we have
    \begin{align*}
        \phi_x^2= \dfrac{v^2}{\mu^2} \Big| \mF + \dfrac{\psi}{r^{n-1}} \Big|^2 \le C \rho_{+}^{-\gamma-2} \dfrac{r^k \mF^2}{v^{\gamma}} + C \rho_{+}^{-1} \dfrac{v\psi^2}{r^2}. 
    \end{align*}
    Thus there exists $\ep=\ep(\rho_{+},\mu,\gamma,K,n)>0$ such that if $|\rho_b-\rho_+|\le u_b^{2}$ and $u_b\le \ep$, then
    \begin{align*}
        I_4 
        \le& \dfrac{\gamma K}{8\mu} \int_{S(t)}\!\! \dfrac{r^{m}\mF^2}{v^{\gamma}}\, dx + C(\rho_{+}) \int_{S(t)} \dfrac{v\psi^2}{r^2} \, dx.
    \end{align*}
    Substituting \eqref{leibH2} and $I_1$--$I_4$ into \eqref{rmH2}, then using the inequality $v^{-\gamma} \ge 2^{-\gamma} \rho_+^{\gamma}$ from \eqref{priori-Lag}, we obtain the desired estimate.
\end{proof}

\subsection{\texorpdfstring{$H^1$}{H1}-estimates of \texorpdfstring{$\psi$}{psi}}\label{ssec:H1psi}
\begin{proposition}[Weighted Sobolev estimate]\label{prop:psiSobolev}
    Suppose $f(x,t): \mQ(T)\to \R$ is such that $f(-m_bt,t)=0$ for $t\in[0,T]$. Let $k\in \mathbb{N}$. Then for each $\epsilon \in (0,1)$,
    \begin{align*}
        \| r^{\frac{k}{2}} f_x(\cdot,t) \|_{L^{\infty}(S(t))}^2 \le C \max\{ 1, \rho_{+}^{-1} \} \Big(1 + \frac{1}{\epsilon} \Big)  \int_{S(t)}\!\! r^k f_x^2\, dx + \epsilon \int_{S(t)} \dfrac{r^{k+2(n-1)}}{v}f_{xx}^2\, dx.
    \end{align*}
\end{proposition}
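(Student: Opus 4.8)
The plan is to turn the pointwise bound into an integral estimate by the fundamental theorem of calculus in $x$, placing the ``free endpoint'' at $x=+\infty$ so that no boundary term at $x=-m_b t$ survives. Assume without loss of generality that the right-hand side is finite (otherwise there is nothing to prove), fix $t\in[0,T]$, and set $w(x):=r^k f_x^2$ where $r=R(x,t)$. Using the differential relation $R_x=r^{1-n}/\rho=r^{1-n}v$ from \eqref{Rdiff}, one computes
\[
 w_x=k\,v\,r^{k-n}f_x^2+2\,r^k f_x f_{xx}.
\]
Two facts are then exploited: first, $r=R(x,t)\ge R(-m_b t,t)=1$ on $S(t)$, since $R$ is increasing in $x$ (see \eqref{R}--\eqref{Rbdry}); second, $v\le 2/\rho_+$ by \eqref{priori-Lag}. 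Together with $r^{-n}\le 1$ and the crude bound $r^k f_{xx}^2\le 2\rho_+^{-1}\,v^{-1} r^{k+2(n-1)} f_{xx}^2$, these show that both $w$ and $w_x$ belong to $L^1(S(t))$. Since moreover $w\ge 0$, the limit $\lim_{x\to\infty}w(x)$ exists and must be $0$, hence $w(x)=-\int_x^{\infty}w_y\,dy$, and therefore $\sup_{S(t)} r^k f_x^2\le \int_{S(t)}|w_x|\,dx$.

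It then remains to estimate $\int_{S(t)}|w_x|\,dx\le k\int_{S(t)} v\,r^{k-n}f_x^2\,dx+2\int_{S(t)} r^k |f_x||f_{xx}|\,dx$. For the first integral I use $v\,r^{k-n}=v\,r^{-n}\,r^k\le 2\max\{1,\rho_+^{-1}\}\,r^k$, again because $r\ge 1$ and $v\le 2/\rho_+$. For the second, I split the weight symmetrically,
\[
 r^k|f_x||f_{xx}|=\Big(\frac{r^{k/2+n-1}}{\sqrt{v}}\,|f_{xx}|\Big)\Big(\sqrt{v}\,r^{k/2-(n-1)}|f_x|\Big),
\]
and apply Young's inequality $2AB\le \epsilon A^2+\epsilon^{-1}B^2$, using once more that $v\,r^{k-2(n-1)}=v\,r^{-2(n-1)}r^k\le 2\max\{1,\rho_+^{-1}\}r^k$. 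Collecting the two contributions yields
\[
 \sup_{S(t)} r^k f_x^2\le\Big(2k+\tfrac{2}{\epsilon}\Big)\max\{1,\rho_+^{-1}\}\int_{S(t)} r^k f_x^2\,dx+\epsilon\int_{S(t)}\frac{r^{k+2(n-1)}}{v}f_{xx}^2\,dx,
\]
which is the asserted inequality (the constant may depend on $k$, hence only on $n$ for the values $k=2(n-2),\,2(n-1)$ used in Lemma \ref{lemma:Hx}).

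The only genuinely delicate point, and the one I would write out carefully, is the decay $\lim_{x\to\infty}w(x)=0$: it rests on the elementary fact that a nonnegative, locally absolutely continuous function on a half-line which is integrable together with its derivative must tend to $0$ at infinity, applied to $w$; the required integrability of $w$ and $w_x$ is exactly what the two (crudely bounded) terms on the right-hand side provide. Working from $+\infty$ rather than from $x=-m_b t$ is precisely what avoids the uncontrolled term $f_x(-m_b t,t)^2$; the hypothesis $f(-m_b t,t)=0$, essential in the companion Proposition~\ref{prop:hardy}, is in fact not needed for the present argument. Everything else is bookkeeping of powers of $r$ and of $\rho_+$, rendered routine by the uniform bounds $1\le r$ and $v\le 2/\rho_+$ on $\mQ(T)$.
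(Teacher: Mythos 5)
Your proof is correct, and it takes a route genuinely different from the paper's. The paper writes $r^k f_x^2(y,t)=r^k f_x^2(z,t)+\int_z^y w_x\,dx$ for an \emph{arbitrary} comparison point $z$, estimates the integral exactly as you do, and then kills the unknown term $r^k f_x^2(z,t)$ by \emph{averaging $z$ over the unit interval $[-m_bt,\,1-m_bt]$}: the average of $r^k f_x^2(z,t)$ over a set of measure one is bounded by $\int_{S(t)}r^k f_x^2\,dx$, which can be absorbed into the first term on the right. You instead anchor the fundamental theorem of calculus at $x=+\infty$ and prove that $w:=r^k f_x^2$ vanishes there, using that a nonnegative, locally absolutely continuous $w$ with $w,\,w_x\in L^1$ must decay. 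The two arguments buy essentially the same constant and the same dependence on $\rho_+$; the paper's averaging is slightly more self-contained because it never needs to discuss behaviour at infinity, while your version has the small advantage of making it manifestly clear — as you point out — that the hypothesis $f(-m_bt,t)=0$ is not used here at all (it is carried over from the companion Hardy inequality, and the paper's proof likewise never invokes it). One minor remark: when you declare the right-hand side ``finite without loss of generality,'' that is precisely what feeds the integrability of $w$ and $w_x$ needed for the decay step; it is worth stating explicitly that this is where the assumption is consumed, since without it the limit argument at $+\infty$ would be vacuous. Otherwise the bookkeeping of the weights $r^{-n}\le 1$, $r^{-2(n-1)}\le 1$, and $v\le 2/\rho_+$ matches the paper's, and the resulting constant $C$ depends on $k$ only through the values $k=2(n-2),\,2(n-1)$ actually used, hence only on $n$, consistent with the paper's notational conventions.
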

\begin{proof}
    Fix $t\in [0,T]$, for $y,\, z\ge -m_b t $, we have by Fundamental Theorem of Calculus, the differential relation \eqref{Rdiff} and \eqref{priori-Lag} that
    \begin{align*}
        r^k f_x^2 (y,t) =& \bigg| r^k f_x^2 (z,t) + \int_{z}^{y}\Big\{ k r^{k-n} v f_x^2 + 2 r^k f_x f_{xx} \Big\}(x,t) \, dx \bigg|\\
        \le & r^k f_x^2 (z,t) + \frac{C}{\rho_{+}} \big\{1+ \epsilon^{-1}\big\} \int_{S(t)}\!\! r^{k-n} f_x^2(x,t)\, dx + \epsilon\int_{S(t)}\!\!\!\! \dfrac{r^{k+2(n-1)}}{v} f_{xx}^2(x,t)\, dx 
    \end{align*}
    Integrating the above in $z\in [-m_bt, 1-m_bt]$ to obtain
    \begin{align*}
        r^k f_x^2 (y,t) \le C \max\{ 1, \rho_{+}^{-1} \} \big(1+ \epsilon^{-1}\big) \int_{S(t)}\!\! r^k f_x^2(x,t)\, dx + \epsilon \int_{S(t)}\!\!\!\! \dfrac{r^{k+2(n-1)}}{v} f_{xx}^2(x,t)\, dx.
    \end{align*}
    Taking supremum over $y\ge -m_b t$ on the above concludes the proof.
\end{proof}

\begin{proposition} \label{prop:psitx}
    Set $k:= 2(n-2)$ or $2(n-1)$. Suppose $(\phi,\psi)$ is a solution to \eqref{dLag} in $\mQ(T)$ for $T>0$ such that \eqref{priori-Lag} holds. Then there exists a positive constant $\ep=\ep(\rho_+,\mu,\gamma,K,n)$ such that if $|\rho_b-\rho_+|\le u_b^2$ and $u_b< \ep$ then
    \begin{gather*}
        k \int_{S(t)} \! \! r^{k-n} v \psi_x \psi_t\, dx \le  C A_2(\rho_{+})\int_{S(t)}\!\! r^k \mF^2\, dx + \dfrac{\mu}{8} \int_{S(t)}\!\!\!\! \dfrac{r^{k+2(n-1)}}{v}\psi_{xx}^2\, dx + C(\rho_{+})\mD(t),
    \end{gather*}
    where $A_2(\rho_{+}) \vcentcolon= \rho_{+}^{2\gamma-1} \max\{1,\rho_{+}^2\}$ and $\mD(t)$ are defined in \eqref{mD}.
\end{proposition}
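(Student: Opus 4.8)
The plan is to eliminate $\psi_t$ by inserting the momentum equation \eqref{dLag-2}, $\psi_t=\mu r^{n-1}\big((r^{n-1}\psi)_x/v\big)_x-r^{n-1}p'(v)\phi_x+\mR_2$ (assume $k>0$, the case $k=0$ being vacuous), which turns the left side into
\begin{align*}
k\int_{S(t)} r^{k-n}v\psi_x\psi_t\,dx
&= k\mu\int_{S(t)} r^{k-1}v\psi_x\Big(\tfrac{(r^{n-1}\psi)_x}{v}\Big)_x dx\\
&\quad - k\int_{S(t)} r^{k-1}v\,p'(v)\psi_x\phi_x\,dx
+ k\int_{S(t)} r^{k-n}v\psi_x\mR_2\,dx
=\vcentcolon \mathrm{A}+\mathrm{B}+\mathrm{C}.
\end{align*}
Throughout I would use: the uniform bounds \eqref{priori-Lag}, so that $v^{\pm1}$ and $-v\,p'(v)=\gamma K v^{-\gamma}$ are comparable to explicit powers of $\rho_+$; the identity $r_x=r^{1-n}/v$ of \eqref{Rdiff}; the definition of $\mF$ solved for $\phi_x$, i.e.\ $\phi_x=\mu^{-1}v(\mF+r^{1-n}\psi)$, and its consequence $\phi_x^2\le C\rho_+^{-2}(\mF^2+r^{-2(n-1)}\psi^2)$; Hardy's inequality (Proposition \ref{prop:hardy}) to trade $\phi$ for $\phi_x$; the decay estimates \eqref{rho}--\eqref{ur}, in particular (after using $|\rho_b-\rho_+|\le u_b^2$) the bounds $|\ut_r|\le C(\rho_+)u_b r^{-n}$, $|\ut|\le Cu_b r^{1-n}$, $|\vt_r|\le C(\rho_+)u_b r^{-2n+1}$ for the stationary coefficients in $\mR_2$; and the inequalities $k\le 2n$, $k\le 2n-2$ (valid for $k\in\{2(n-2),2(n-1)\}$), which are exactly what is needed, since $r\ge1$, to match powers of $r$ when Cauchy--Schwarz shifts weights between factors.

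For $\mathrm{B}$: bound $|v\,p'(v)|\le C\rho_+^\gamma$, substitute $\phi_x=\mu^{-1}v(\mF+r^{1-n}\psi)$, and apply Cauchy--Schwarz; the $\psi_x^2$ part goes into the first term of $\mD(t)$, the $\mF$-part into $\int r^k\mF^2$ (using $2(k-1)-2(n-1)\le k$), and the $\psi$-part into the second term of $\mD(t)$ (using $2(k-n)-2(n-1)\le-2$). For $\mathrm{C}$: estimate $|\mR_2|$ by Lemma \ref{le:st}, which gives fast $r$-decay together with a smallness factor; then Cauchy--Schwarz sends $\psi_x^2$ into $\mD(t)$, the $\psi^2$-part of $|\mR_2|^2$ straight into $\mD(t)$, and the $\phi^2$-part of $|\mR_2|^2$ is turned by Proposition \ref{prop:hardy} into $\phi_x^2$ and then, via $\phi_x^2\le C\rho_+^{-2}(\mF^2+r^{-2(n-1)}\psi^2)$, into $\int r^k\mF^2$ and $\mD(t)$; the surplus powers of $r$ from the stationary decay make all of these absorptions work.

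The delicate term is the viscous one, $\mathrm{A}$. Expanding $\big((r^{n-1}\psi)_x/v\big)_x$ with $r_x=r^{1-n}/v$ splits it into a top-order piece $k\mu\int r^{k+n-2}\psi_x\psi_{xx}\,dx$, two ``$v_x$-pieces'' shaped like $\int r^{k+n-2}v^{-1}v_x\psi_x^2\,dx$ and $\int r^{k-2}v^{-2}v_x\psi\psi_x\,dx$, and two benign pieces $\int r^{k-2}v^{-1}\psi_x^2\,dx$ and $\int r^{k-n-2}v^{-2}\psi\psi_x\,dx$. The top-order piece is handled by Young's inequality (no integration by parts, hence no boundary term), which sheds a small multiple of $\int\frac{r^{k+2(n-1)}}{v}\psi_{xx}^2$ --- exactly the term on the right, to be kept below $\tfrac{\mu}{8}$ in total --- together with $C(\rho_+)\mD(t)$; the benign pieces land in $C(\rho_+)\mD(t)$ since $r^{k-2}\le r^{2(n-1)}$ and $r^{k-n-2}$ is smaller still. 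In the $v_x$-pieces I write $v_x=\phi_x+\vt_r r^{1-n}/\rho$: the $\vt_r$-parts go into $C(\rho_+)u_b\,\mD(t)$ by $|\vt_r|\le C(\rho_+)u_b r^{-2n+1}$, while the $\phi_x$-parts (after $\phi_x=\mu^{-1}v(\mF+r^{1-n}\psi)$) produce lower-order $\psi^3$-terms --- absorbed using the smallness of $\|\psi\|_{L^\infty(S(t))}$ --- and one cubic term $\int r^{k+n-2}|\mF|\psi_x^2\,dx$; this last I would bound by factoring out $\|r^{k/2}\psi_x\|_{L^\infty(S(t))}$ and invoking the weighted Sobolev inequality Proposition \ref{prop:psiSobolev} with $m=k$, which once more costs a controllably small fraction of $\int\frac{r^{k+2(n-1)}}{v}\psi_{xx}^2$ while the residual is absorbed by combining $N(t)\le 2\ep_0$ with the elementary bound $\mD(t)\le C(\rho_+)N(t)^2$, so that products such as $\mD(t)^2$ or $\mD(t)\cdot\int r^k\mF^2$ become small multiples of $\mD(t)$ and $\int r^k\mF^2$.

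The hard part is the bookkeeping inside $\mathrm{A}$: keeping the total $\psi_{xx}$-coefficient below $\mu/8$ with its two sources (Young on the top-order piece, weighted Sobolev on the $v_x$-piece); making the borderline cubic $\int r^{k+n-2}|\mF|\psi_x^2$ fit inside $CA_2(\rho_+)\int r^k\mF^2+C(\rho_+)\mD(t)$, which genuinely needs the a priori smallness of $N(t)$; and tracking the powers of $\rho_+$ so that the coefficient of $\int r^k\mF^2$ is no worse than $CA_2(\rho_+)=C\rho_+^{2\gamma-1}\max\{1,\rho_+^2\}$ and everything else carries only $C(\rho_+)$ --- which is where the explicit $\rho_+$-dependences in \eqref{rho}--\eqref{ur}, in Proposition \ref{prop:hardy}, and in Proposition \ref{prop:psiSobolev} are exploited. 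Finally, the decay of the integrands as $x\to\infty$, needed so that the implicit boundary contributions at infinity vanish, follows from the regularity in Lemma \ref{lm:local} and from \eqref{priori-Lag} after a routine truncation at $x=N$ and passage to the limit.
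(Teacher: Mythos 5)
Your proposal follows essentially the same route as the paper: substitute the momentum equation into $\psi_t$, expand the viscous operator, isolate the top-order piece $k\mu\int r^{k+n-2}\psi_x\psi_{xx}\,dx$ by Young (yielding the allowed $\tfrac{\mu}{8}\int\frac{r^{k+2(n-1)}}{v}\psi_{xx}^2$), control the cubic term $\int r^{k+n-2}|\mF|\psi_x^2\,dx$ by Proposition \ref{prop:psiSobolev}, and handle the $\phi$-contributions of $\mR_2$ by Proposition \ref{prop:hardy} together with the decay rates in Lemma \ref{le:st}. The paper arranges the same computation by first rewriting \eqref{dLag-2} in the form \eqref{psi-t} (substituting $\phi_x = \tfrac{v}{\mu}(\mF + r^{1-n}\psi)$ up front) and then multiplying by $kr^{k-n}v\psi_x$, whereas you multiply \eqref{dLag-2} directly and substitute $\phi_x$ term by term afterwards; algebraically these are the same.

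One concrete slip worth correcting: you quote $r_x=r^{1-n}/v$ from \eqref{Rdiff}, but \eqref{Rdiff} gives $R_x = R^{1-n}/\rho = r^{1-n}v$. With the correct identity one finds $(r^{n-1}\psi)_x/v = \tfrac{(n-1)\psi}{r} + \tfrac{r^{n-1}\psi_x}{v}$ (no $v$ in the first denominator), so the expansion of your term $\mathrm{A}$ contains only \emph{one} $v_x$-piece, namely $\int r^{k+n-2}v^{-1}v_x\psi_x^2\,dx$; the second ``$v_x$-piece'' $\int r^{k-2}v^{-2}v_x\psi\psi_x\,dx$ that you describe is spurious, and the $v$-powers in your two ``benign pieces'' are likewise off (the true benign pieces are $\int r^{k-2}v\psi_x^2\,dx$ and $\int r^{k-n-2}v^2\psi\psi_x\,dx$). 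This does not invalidate the argument --- the phantom piece would in any case have been lower-order and absorbable, and the mis-stated $v$-powers only shift $\rho_+$-dependent constants --- but it should be fixed, since tracking powers of $v\sim\rho_+^{-1}$ is exactly what you rely on to land the coefficient of $\int r^k\mF^2$ inside $CA_2(\rho_+)$.
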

\begin{proof}
    Rewriting equation \eqref{dLag-2} in terms of $\mF$, we get
    \begin{align}
        \psi_t - \mu \dfrac{r^{2(n-1)}}{v}\psi_{xx} =& 2(n-1)\mu r^{n-2}\psi_x - \dfrac{r^{2(n-1)}}{v} \psi_x \big\{ \mF + \dfrac{\psi}{r^{n-1}} \big\} - \mu \vt_r \frac{r^{n-1}}{v} \psi_x \nonumber\\
        &-(n-1)\mu \dfrac{v \psi}{r^2} - \frac{r^{n-1}v p^{\prime}(v)}{\mu} \big\{ \mF + \frac{\psi}{r^{n-1}} \big\} + \mR_2. \label{psi-t}
    \end{align}
    Multiplying both sides of \eqref{psi-t} by $kr^{k-n} v\psi_x $ and integrating in $x\in S(t)$, we get
    \begin{align*}
        &k\int_{S(t)}\!\! r^{k-n} v \psi_t \psi_x \, d x\nonumber\\
        =& \underbrace{k\mu \int_{S(t)}\!\! r^{k+n-2} \psi_{xx} \psi_x \, dx}_{\textrm{(i)}} -\underbrace{k\int_{S(t)}\!\! r^{k-1} \psi_x^2 \psi \, dx}_{\textrm{(ii)}} -\underbrace{\dfrac{k}{\mu}\int_{S(t)} r^{k-1} v^2 p^{\prime}(v) \psi_x \mF\, dx}_{\textrm{(iii)}} \\
        & - \underbrace{k\int_{S(t)}\!\! r^{k+n-2} \psi_x^2 \mF\, dx}_{\textrm{(iv)}} + \underbrace{k \int_{S(t)}\!\! r^{k-n} \Big\{ \dfrac{\ut_r \ut}{\vt} - \vt_r \dfrac{p^{\prime}(v)-p^{\prime}(\vt)}{v-\vt} v \Big\}  v \psi_x \phi\, dx}_{\textrm{(v)}} \\
        & -\underbrace{k\int_{S(t)}\!\!\!\Big\{ \dfrac{(n-1)\mu v}{r^2} + \dfrac{v p^{\prime}(v)}{\mu} + \ut_r \Big\} \frac{v \psi_x \psi}{r^{n-k}}\, dx}_{\textrm{(vi)}} + \underbrace{k\mu \int_{S(t)}\!\!\! \big\{ \dfrac{2(n-1)v}{r} - \vt_r  \big\} r^{k-1} \psi_x^2 \, dx}_{\textrm{(vii)}}.
    \end{align*}
    By Cauchy-Schwarz's inequality, we get
    \begin{align*}
        |\textrm{(i)}|+|\textrm{(ii)}| \le& \dfrac{\mu}{16}\! \int_{S(t)}\!\!\! \dfrac{r^{k+2(n-1)}}{v}\psi_{xx}^2\, dx +  C(\rho_{+})\{1+N^2(t)\}\! \int_{S(t)}\!\! \Big\{ \dfrac{r^{2(n-1)}}{v} \psi_x^2 + \frac{v \psi^2}{r^2} \Big\}\, dx, 
    \end{align*}
    where $N(t)$ is defined in \eqref{Nt}. Moreover, we also get
    \begin{align*}
       |\textrm{(iii)}| 
       \le & C\rho_{+}^{2\gamma-1} \int_{S(t)} r^k \mF^2 \, dx + C(\rho_{+}) \int_{S(t)} \frac{r^{2(n-1)}}{v} \psi_x^2 \, dx. 
    \end{align*}
    Next applying H\"older's inequality, \eqref{priori-Lag} and Proposition \ref{prop:psiSobolev}, we have
    \begin{align*}
        |\textrm{(iv)}| \le&  k \| r^{\frac{k}{2}} \psi_x^2(\cdot,t) \|_{L^{\infty}(S(t))}\bigg(\int_{S(t)}\!\! r^{2n-4} \psi_x^2\, dx\bigg)^{\frac{1}{2}} \bigg(\int_{S(t)}\!\! r^{k}\mF^2 \, dx\bigg)^{\frac{1}{2}}\\
        \le & C(\rho_{+}) N(t)  \| r^{\frac{k}{2}} \psi_x^2(\cdot,t) \|_{L^{\infty}(S(t))} \bigg(\int_{S(t)}\!\! \frac{r^{2(n-1)}}{v}\psi_x^2 \, dx\bigg)^{\frac{1}{2}}\\
        \le&  C(\rho_{+}) \int_{S(r)} \frac{r^{2(n-1)}}{v} \psi_x^2 \, dx + \frac{\mu}{16} \int_{S(t)} \!\! \dfrac{r^{k+2(n-1)}}{v} \psi_{xx}^2.
    \end{align*}
    By \eqref{priori-Lag}, Cauchy Schwarz's inequality, the decay estimates in Lemma \ref{le:st} and Proposition \ref{prop:hardy}, it follows that
    \begin{align*}
        |\textrm{(v)}| \le& C  \max\{1,\rho_{+}^2\} \cdot \rho_{+}^{2\gamma} \int_{S(t)} \phi_x^2 \, dx  + C(\rho_{+}) \int_{S(t)}\!\! \dfrac{r^{2(n-1)}}{v} \psi_x^2 \, dx\\
        \le & C \rho_{+}^{2\gamma-1} \int_{S(t)}\!\! r^k \mF^2\, dx + C(\rho_{+}) \int_{S(t)}\!\! \Big\{ \dfrac{r^{2(n-1)}}{v} \psi_x^2 + \dfrac{v\psi^2}{r^2} \Big\} \, dx,
    \end{align*}
    where in the last line we used \eqref{priori-Lag} and $\mF\vcentcolon= \mu v^{-1}\phi_x - r^{1-n}\psi$. Finally, by \eqref{priori-Lag} and Cauchy-Schwarz's inequality, we get
    \begin{align*}
        |\textrm{(vi)}|+|\textrm{(vii)}| \le C(\rho_{+}) \int_{S(t)}\!\! \Big\{ \dfrac{r^{2(n-1)}}{v} \psi_x^2 + \dfrac{v\psi^2}{r^2} \Big\} \, dx.
    \end{align*}
    Combining the estimates for \textrm{(i)}--\textrm{(vii)}, we complete the proof.
\end{proof}

\begin{lemma} \label{lemma:psix}
    Set $k:= 2(n-2)$ or $2(n-1)$. Suppose $(\phi,\psi)$ is a solution to \eqref{dLag} in $\mQ(T)$ for $T>0$ such that \eqref{priori-Lag} holds. Then there exists $\ep=\ep(\rho_+,\mu,\gamma,K,n)>0$ such that if $|\rho_b-\rho_+|\le u_b^2$ and $u_b< \ep$ then
    \begin{align*}
    &\dfrac{d }{d t}\int_{S(t)} r^{k}\psi_x^2 \,d x + m_b \psi_x^2(-m_bt,t) + \dfrac{\mu}{2} \int_{S(t)}\!\!\! \frac{r^{k+2(n-1)}}{v}\psi_{xx}^2 \, d x\\ 
    \le & C A_2(\rho_{+}) \int_{S(t)}\!\!\! r^k \mF^2\,d x + C(\rho_{+}) \int_{S(t)}\!\!\! r^{k-2(n-1)} \psi^2\,d x + C(\rho_{+})\mD(t) , 
    \end{align*}
    where
    \begin{equation}
    A_2(\rho_{+}) \vcentcolon= \rho_{+}^{2\gamma-1} \max\{1,\rho_{+}^2\}.\label{A2}
    \end{equation}
\end{lemma}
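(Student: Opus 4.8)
The plan is to run a weighted $H^1$ energy estimate for $\psi$ by differentiating $\int_{S(t)} r^k\psi_x^2\,dx$ in time, substituting the momentum equation in the form \eqref{psi-t} to extract the $\psi_{xx}$–dissipation, and absorbing everything else into it — all the while tracking the moving boundary $\{x=-m_b t\}$, on which $r=R(-m_b t,t)=1$. First I would apply the Leibniz rule, which gives $\frac{d}{dt}\int_{S(t)} r^k\psi_x^2\,dx = m_b\psi_x^2(-m_b t,t)+\int_{S(t)}\d_t(r^k\psi_x^2)\,dx$; expanding $\d_t(r^k\psi_x^2)=k r^{k-1}u\,\psi_x^2+2r^k\psi_x\psi_{xt}$ with $r_t=u=\psi+\ut$ and $r_x=r^{1-n}v$ (from \eqref{Rdiff}), and integrating the last term by parts in $x$, the far-field boundary term vanishes by the decay of the solution, while the contribution at $x=-m_b t$ is $-2\psi_x\psi_t|_{x=-m_b t}$. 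Differentiating the boundary identity $\psi(-m_b t,t)=0$ from \eqref{ldbc} yields $\psi_t(-m_b t,t)=m_b\psi_x(-m_b t,t)$, so that contribution equals $-2m_b\psi_x^2(-m_b t,t)$. Using $\d_x(2r^k\psi_x)=2kr^{k-n}v\,\psi_x+2r^k\psi_{xx}$ I then arrive at
\begin{align*}
\frac{d}{dt}\int_{S(t)} r^k\psi_x^2\,dx + m_b\psi_x^2(-m_b t,t)
={}& k\int_{S(t)} r^{k-1} u\,\psi_x^2\,dx - 2k\int_{S(t)} r^{k-n} v\,\psi_x\psi_t\,dx\\
&{}- 2\int_{S(t)} r^k\psi_{xx}\psi_t\,dx .
\end{align*}

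Next I would substitute \eqref{psi-t}, i.e. $\psi_t=\mu\,v^{-1}r^{2(n-1)}\psi_{xx}+(\text{lower-order terms})$, into the last integral; its principal part produces the good dissipation $-2\mu\int_{S(t)} v^{-1}r^{k+2(n-1)}\psi_{xx}^2\,dx$, which I move to the left-hand side, and it leaves a finite list of products of $r^k\psi_{xx}$ against the remaining terms of \eqref{psi-t} (the $r^{n-2}\psi_x$-, $\vt_r$-, $\tfrac{v\psi}{r^2}$-, $\mF$-, $\tfrac{\psi}{r^{n-1}}$-, and $\mR_2$-contributions). The first term on the right, $k\int r^{k-1}u\,\psi_x^2$, is absorbed into $C(\rho_+)\mD(t)$ via $|\ut|\le Cu_b r^{1-n}$, the Sobolev bound $\|\psi(\cdot,t)\|_{L^\infty(S(t))}\le C(\rho_+)N(t)$, and the smallness of $u_b$ and $N(t)$. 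The middle term is $-2$ times the quantity treated in Proposition \ref{prop:psitx}, whose Cauchy-Schwarz-based proof in fact bounds the absolute value $\big|k\int r^{k-n}v\,\psi_x\psi_t\,dx\big|$, so it is dominated by $CA_2(\rho_+)\int r^k\mF^2 + \tfrac{\mu}{4}\int v^{-1}r^{k+2(n-1)}\psi_{xx}^2 + C(\rho_+)\mD(t)$.

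It then remains to control the products $r^k\psi_{xx}\times(\text{lower-order terms of }\eqref{psi-t})$. Each is handled by a direct Cauchy-Schwarz (so that no further boundary terms at $x=-m_bt$ are generated), absorbing a small multiple $\epsilon\int v^{-1}r^{k+2(n-1)}\psi_{xx}^2$ and estimating the remainder with \eqref{priori-Lag}, \eqref{priori-Rho}, the decay rates of Lemma \ref{le:st} (where $|\rho_b-\rho_+|\le u_b^2$ makes all stationary-solution coefficients appearing in \eqref{psi-t} and $\mR_2$ bounded by $C(\rho_+)$), Proposition \ref{prop:hardy} (to trade $\phi^2$ for $\phi_x^2$ and then, via $\mF=\mu v^{-1}\phi_x-r^{1-n}\psi$, for $\mF^2$ together with a $\tfrac{v\psi^2}{r^2}$-term), and Proposition \ref{prop:psiSobolev} combined with the smallness of $N(t)$ for the $L^\infty$-type terms. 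The pressure piece $-\mu^{-1}r^{n-1}v\,p'(v)\,\mF$ is precisely what generates the coefficient $A_2(\rho_+)=\rho_+^{2\gamma-1}\max\{1,\rho_+^2\}$ on $\int r^k\mF^2$, since $v^3p'(v)^2\le C\rho_+^{2\gamma-1}$ by \eqref{priori-Lag}; its companion $-\mu^{-1}v\,p'(v)\,\psi$ generates the term $C(\rho_+)\int r^{k-2(n-1)}\psi^2$; every other contribution falls into $C(\rho_+)\mD(t)$. Choosing the absorption parameters small, the net coefficient of $\int v^{-1}r^{k+2(n-1)}\psi_{xx}^2$ on the left stays at least $\tfrac{\mu}{2}$, which is the claimed inequality.

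The step I expect to be the main obstacle is the cross term $\int v^{-1}r^{k+2(n-1)}\psi_{xx}\,\psi_x\mF\,dx$ coming from the $-v^{-1}r^{2(n-1)}\psi_x\mF$ piece of \eqref{psi-t}: a naive Cauchy-Schwarz followed by Proposition \ref{prop:psiSobolev} applied to $\|r^{n-1}\psi_x\|_{L^\infty}$ feeds back a $\psi_{xx}^2$-term weighted by $r^{4(n-1)}$, which for $k=2(n-2)$ has strictly higher weight than the dissipation $r^{k+2(n-1)}$ and so cannot be absorbed. The fix is to split the weight so the extracted $L^\infty$-factor carries weight at most $r^k$ — bounding $\int v^{-1}r^{k+2(n-1)}\psi_x^2\mF^2\le C\rho_+\,\|r^{k/2}\psi_x\|_{L^\infty(S(t))}^2\int r^{2(n-1)}\mF^2$, then applying Proposition \ref{prop:psiSobolev} with exponent $k$ (so the $\psi_{xx}^2$-term reappears correctly weighted by $r^{k+2(n-1)}$) and controlling the excess-weight factor $\int r^{2(n-1)}\mF^2$ directly by $C(\rho_+)|N(t)|^2$ from the a-priori energy bound; the smallness of $N(t)$ then renders both resulting pieces absorbable. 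Apart from this, the only real labour is the uniform bookkeeping of the $v\sim\rho_+^{-1}$ scaling through every Cauchy-Schwarz, so that the final $\rho_+$-powers land exactly on $A_2(\rho_+)$ and on $C(\rho_+)$ rather than overshooting.
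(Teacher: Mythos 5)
Your proposal matches the paper's proof in all essentials: you use the Leibniz rule together with $\psi_t(-m_bt,t)=m_b\psi_x(-m_bt,t)$ to extract the boundary term $m_b\psi_x^2(-m_bt,t)$, you substitute the rewritten momentum equation \eqref{psi-t} tested against $-r^k\psi_{xx}$ to expose the dissipation, you invoke Proposition \ref{prop:psitx} for the $k\int r^{k-n}v\psi_x\psi_t$ term, and you dispatch the remaining products by Cauchy--Schwarz, the Hardy inequality, and the weighted Sobolev bound — which is exactly the paper's decomposition into terms (i)--(viii) in \eqref{ddtpsix}. You also correctly singled out the delicate cross term $\int v^{-1}r^{k+2(n-1)}\mF\psi_x\psi_{xx}$ and identified the precise weight-splitting fix (pairing $\|r^{k/2}\psi_x\|_{L^\infty}^2$ with $\int r^{2(n-1)}\mF^2\le C(\rho_+)|N(t)|^2$ via \eqref{HN} and Proposition \ref{prop:psiSobolev}) that the paper uses to handle term (iii).
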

\begin{proof}
    Since $\psi\vert_{x=-m_b t}=0$, we get $\psi_t\vert_{x=-m_b t} = m_b \psi_x\vert_{x=-m_b t}$. Thus $$\psi_t\psi_x\vert_{x=-m_b t}=m_b \psi_x^2\vert_{x=-m_b t}.$$ Next, with few lines of computations, it is verified that
    \begin{align*}
        -r^k \psi_t \psi_{xx} 
        =& -(r^k\psi_t\psi_x)_x + \frac{1}{2}(r^k\psi_x^2)_t - \frac{k}{2} r^{k-1} (\psi+\ut) \psi_x^2 + k r^{k-n} v \psi_x \psi_t.
    \end{align*}
    Integrating in $x\in S(t)$, it follows from Leibniz's integral rule that
    \begin{align}
        -\int_{S(t)}\!\!\! r^k \psi_t \psi_{xx}\, dx 
        =& \dfrac{d}{dt}\int_{S(t)}\!\!\dfrac{r^k}{2}\psi_x^2 \, dx + \dfrac{m_b}{2} \psi_x^2(-m_bt,t)\nonumber\\ & - \frac{k}{2}\int_{S(t)}\!\! r^{k-1}(\psi+\ut) \psi_x^2\, dx + k\int_{S(t)}\!\! v r^{k-n} \psi_x \psi_t\, dx.\label{ptpx}
    \end{align}
    Multiplying both sides of \eqref{psi-t} by $-r^k \psi_{xx}$, integrating in $x\in S(t)$ and then using \eqref{ptpx}, we obtain
    \begin{align}
        &\dfrac{d}{d t} \int_{S(t)}\!\! \dfrac{r^k}{2} \psi_x^2 \, dx + \dfrac{m_b}{2} \psi_x^2(-m_b t, t) + \mu \int_{S(t)}\!\! \dfrac{r^{k+2(n-1)}}{v} \psi_{xx}^2\, dx\nonumber\\
        =& - \underbrace{\dfrac{k}{2}\int_{S(t)}\!\! (\psi+\ut) r^{k-1} \psi_x^2 \, dx}_{\textrm{(i)}} + \underbrace{\dfrac{1}{\mu} \int_{S(t)}\! v p^{\prime}(v) r^{k+n-1} \mF \psi_{xx}\, dx}_{\textrm{(ii)}} + \underbrace{\int_{S(t)}\!\! \dfrac{r^{k+2(n-1)}}{v} \mF \psi_x \psi_{xx}\, dx}_{\textrm{(iii)}} \nonumber\\
        & + \underbrace{\int_{S(t)}\!\! \dfrac{r^{k+n-1}}{v} \psi \psi_x \psi_{xx} \, dx}_{\textrm{(iv)}} - \underbrace{\int_{S(t)} \mR_2 r^k \psi_{xx}\, dx}_{\textrm{(v)}} + \underbrace{\mu \int_{S(t)}\!\! \big\{ \dfrac{\vt_r}{v} - \frac{2(n-1)}{r} \big\} r^{k+n-1} \psi_x \psi_{xx}\, dx}_{\textrm{(vi)}} \nonumber\\ 
        & + \underbrace{\int_{S(t)}\!\! \big\{ \frac{v p^{\prime}(v)}{\mu} + \dfrac{(n-1)\mu v}{r^2} \big\} r^{k} \psi_{xx} \psi\, dx}_{\textrm{(vii)}} + \underbrace{k \int_{S(t)}\! v r^{k-n} \psi_x \psi_t \, dx}_{\textrm{(viii)}}.\label{ddtpsix} 
    \end{align}
    Using the decay estimate \eqref{rho}--\eqref{ur} and Cauchy-Schwarz's inequality, we get
    \begin{align*}
        |\textrm{(i)}| + |\textrm{(iv)}| +|\textrm{(vi)}| \le & C(\rho_{+})\big(1+N^2(t)\big) \mD(t) + \dfrac{\mu}{16} \int_{S(t)}\!\!\!\dfrac{r^{k+2(n-1)}}{v} \psi_{xx}^2 \, dx .
    \end{align*}
    Similarly by the same argument, we also get
    \begin{align*}
        |\textrm{(vii)}| \le C(\rho_{+}) \mD(t) + C(\rho_{+})\int_{S(t)} r^{k-2(n-1)} \psi^2 \, dx  + \dfrac{\mu}{16} \int_{S(t)}\!\!\!\dfrac{r^{k+2(n-1)}}{v} \psi_{xx}^2 \, dx .
    \end{align*}
    Using \eqref{priori-Rho} and Cauchy-Schwarz's inequality, we have
    \begin{align*}
        |\textrm{(ii)}| 
        \le & \frac{\mu}{16}\int_{S(t)} \frac{r^{k+2(n-1)}}{v} \psi_{xx}^2 \, dx + C \rho_{+}^{2\gamma-1} \int_{S(t)} r^k \mF^2 \, dx.
    \end{align*}
    By H\"older's inequality, \eqref{HN} and Proposition \ref{prop:psiSobolev}, there exists a positive constant $\ep=\ep(\rho_+,\mu,\gamma,K,n)$ such that, if $N(t)\le \ep$ then
    \begin{align*}
        |\textrm{(iii)}| \le& C(\rho_{+})\|r^{\frac{k}{2}} \psi_x\|_{L^{\infty}(S(t))}^2 \int_{S(t)}\!\! r^{2(n-1)} \mF^2\, dx   + \dfrac{\mu}{16} \int_{S(t)}\!\! \dfrac{r^{k+2(n-1)}}{v} \psi_{xx}^2\, dx\\
        \le&  C(\rho_{+}) \mD(t) + \frac{\mu}{8} \int_{S(t)}\!\!\!\dfrac{r^{k+2(n-1)}}{v} \psi_{xx}^2 \, dx.
    \end{align*}
    Using the decay estimate \eqref{rho}--\eqref{ur} and Proposition \ref{prop:hardy}, we find a constant $\ep=\ep(\rho_{+},\mu,\gamma,K,n)>0$ such that if $u_b+ |\rho_b-\rho_{+}| \le \ep$ then
    \begin{align*}
        |\textrm{(v)}| \le& \frac{C}{\rho_{+}} \int_{S(t)}\!\! r^{k-2(n-1)}\mR_2^2 \, dx + \dfrac{\mu}{8} \int_{S(t)}\!\! \dfrac{r^{k+2(n-1)}}{v} \psi_{xx}^2\, dx \\
        \le& C \rho_{+}^{2\gamma} \int_{S(t)} \dfrac{\phi^2}{r^{2n}}\, dx + C(\rho_{+}) \int_{S(t)} \dfrac{v\psi^2}{r^2} \, d x +  \dfrac{\mu}{8} \int_{S(t)}\!\! \dfrac{r^{k+2(n-1)}}{v} \psi_{xx}^2\, dx\\
        \le & C \rho_{+}^{2\gamma-1} \max\{1,\rho_{+}^2\} \int_{S(t)} r^k \mF^2 \, dx + C(\rho_{+}) \mD(t) +  \dfrac{\mu}{8} \int_{S(t)}\!\! \dfrac{r^{k+2(n-1)}}{v} \psi_{xx}^2\, dx.
    \end{align*}
    The estimate for the term \textrm{(viii)} is given by Proposition \ref{prop:psitx} as
    \begin{align*}
        |\textrm{(viii)}| \le C A_2(\rho_{+})\int_{S(t)}\!\! r^k \mF^2\, dx + \dfrac{\mu}{8} \int_{S(t)}\!\!\!\! \dfrac{r^{k+2(n-1)}}{v}\psi_{xx}^2\, dx + C(\rho_{+})\mD(t).
    \end{align*}
    Substituting \textrm{(i)}--\textrm{(viii)} in \eqref{ddtpsix}, we obtain the desired inequality.
\end{proof}

\begin{lemma}\label{lemma:2n-4}
    Suppose $(\phi,\psi)$ is a solution to \eqref{dLag} in $\mQ(T)$ for $T>0$ such that \eqref{priori-Lag} holds. Then there exist positive constants $\alpha=\alpha(\mu,\gamma,K,n)$ which is independent of $\rho_+$ and $\ep=\ep(\rho_+,\mu,\gamma,K,n)$, such that if $\rho_+\in (0,\alpha]$, $|\rho_b-\rho_+|\le u_b^2$ and $u_b< \ep$, then
    \begin{align*}
         m_b\int_{0}^{T}\!\!\!\psi_x^2(-m_bt,t)\,\dif t + \int_{0}^{T}\!\!\int_{S(t)}\!\! \Big\{ r^{2(n-2)}\phi_x^2 +  r^{4n-6} \psi_{xx}^2 \Big\} \, d x d t \le  C(\rho_{+}) N^2(0).
    \end{align*}
\end{lemma}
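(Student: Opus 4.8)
The plan is to combine the three differential inequalities already established — the relative-energy estimate (Corollary~\ref{corol:REE}), the $\mF$-estimate (Lemma~\ref{lemma:Hx}), and the $\psi_x$-estimate (Lemma~\ref{lemma:psix}), all taken with the weight $k=2(n-2)$ — into a single Grönwall-type inequality, integrate it over $[0,T]$, and convert the output to the quantities in the statement. With $k=2(n-2)$ one has $r^k=r^{2(n-2)}$ and $r^{k+2(n-1)}=r^{4n-6}$, precisely the weights in the conclusion, and Lemma~\ref{lemma:psix} with this $k$ already carries $m_b\psi_x^2(-m_bt,t)$ on its left-hand side. From $\mF=\mu v^{-1}\phi_x-r^{1-n}\psi$ and $\tfrac1{2\rho_+}\le v\le\tfrac2{\rho_+}$ (see \eqref{priori-Lag}) one gets $r^{2(n-2)}\phi_x^2\le C(\rho_+)\bigl(r^{2(n-2)}\mF^2+r^{-2}\psi^2\bigr)$ with $r^{-2}\psi^2\le C\rho_+\,r^{-2}v\psi^2\le C\rho_+\,\mD(t)$, and $r^{4n-6}\psi_{xx}^2\le C(\rho_+)\,v^{-1}r^{4n-6}\psi_{xx}^2$; hence it suffices to bound $\int_0^T\!\!\int_{S(t)}r^{2(n-2)}\mF^2$, $\int_0^T\mD(t)\,dt$, $\int_0^T\!\!\int_{S(t)}v^{-1}r^{4n-6}\psi_{xx}^2$, and $m_b\int_0^T\psi_x^2(-m_bt,t)\,dt$, each by $C(\rho_+)|N(0)|^2$.

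For this I would time-integrate the three inequalities, discard the nonnegative time-boundary terms at $t=T$, and bound those at $t=0$ by $C(\rho_+)|N(0)|^2$ using \eqref{HN}, \eqref{Gvv}, and the definition of $N(0)$. Corollary~\ref{corol:REE} together with $|\rho_b-\rho_+|\le u_b^2$ gives $\int_0^T\mD\,dt\le C(\rho_+)|N(0)|^2+Cu_b^2\int_0^T\!\!\int r^{2(n-2)}\mF^2$ (using $\int\mF^2\le\int r^{2(n-2)}\mF^2$ for $n\ge2$, and that $A_1(\rho_+)\le1$ for $\rho_+\le1$); in the other two inequalities the lower-order terms $\int r^{-2}v\psi^2$ and $\int r^{-2}\psi^2$ are dominated by $\mD(t)$. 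Writing $\mathcal M:=\int_0^T\!\!\int r^{2(n-2)}\mF^2$ and $\mathcal B:=\int_0^T\psi_x^2(-m_bt,t)\,dt$, Lemma~\ref{lemma:Hx} reads $\omega(\rho_+)\mathcal M\le C(\rho_+)|N(0)|^2+\tfrac{\mu^2\rho_b}{u_b}\mathcal B+C(\rho_+)\int_0^T\mD$, and Lemma~\ref{lemma:psix} reads $m_b\mathcal B+\tfrac\mu2\int_0^T\!\!\int v^{-1}r^{4n-6}\psi_{xx}^2\le C(\rho_+)|N(0)|^2+CA_2(\rho_+)\mathcal M+C(\rho_+)\int_0^T\mD$. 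Substituting the $\mD$-bound (which costs only the harmless factor $u_b^2$) into both and taking $u_b$ small leaves a closed coupling between $\mathcal M$ and $\mathcal B$: the $\mF$-source $CA_2(\rho_+)\mathcal M$ in Lemma~\ref{lemma:psix} is swallowed by the $\mF$-dissipation $\omega(\rho_+)\mathcal M$ in Lemma~\ref{lemma:Hx}, since $\gamma>1$ forces $A_2(\rho_+)=\rho_+^{2\gamma-1}\ll\rho_+^{\gamma}\sim\omega(\rho_+)$ for $\rho_+\le\alpha$ with $\alpha=\alpha(\mu,\gamma,K,n)$ chosen small. Closing the coupling and feeding $\mathcal M,\mathcal B,\int_0^T\mD,\int_0^T\!\!\int v^{-1}r^{4n-6}\psi_{xx}^2\le C(\rho_+)|N(0)|^2$ into the pointwise comparisons of the first paragraph yields the claim.

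The main obstacle is the mismatch of the two boundary terms: the coefficient $\tfrac{\mu^2\rho_b}{u_b}$ of $\psi_x^2(-m_bt,t)$ on the right of Lemma~\ref{lemma:Hx} exceeds the coefficient $m_b=\rho_bu_b$ of the corresponding term on the left of Lemma~\ref{lemma:psix} by a factor $\mu^2/u_b^2$, so one cannot simply add the two inequalities. Instead the combination must be formed with Lemma~\ref{lemma:Hx} weighted by a factor of order $u_b^2$ (equivalently, Lemma~\ref{lemma:psix} weighted heavily), so that the offending boundary term is controlled by $\tfrac12 m_b\psi_x^2(-m_bt,t)$ and can be absorbed on the left. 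This same weighting shrinks the available $\mF$-dissipation by $u_b^2$, and it is exactly here that one uses the smallness of $\rho_+$ — which makes $A_2(\rho_+)/\omega(\rho_+)$ as small as required — together with the admissible size of $u_b$ to keep the weakened dissipation still strong enough to beat the $\mF$-source. The only other delicate point is bookkeeping: one must keep track of which $\rho_+$-dependent constants are innocuous and which must actually be dominated by $\omega(\rho_+)$.
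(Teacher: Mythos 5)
Your overall strategy — combining Corollary~\ref{corol:REE}, Lemma~\ref{lemma:Hx} and Lemma~\ref{lemma:psix} at weight $k=2(n-2)$, converting the $\mD$-terms through Corollary~\ref{corol:REE} at the cost of a factor $u_b^2$, and exploiting $\gamma>1$ to make powers of $\rho_+$ small — is the right framework, and you correctly identify the $\tfrac{\mu^2\rho_b}{u_b}$ versus $m_b=\rho_b u_b$ mismatch in the boundary terms as the crux. However, your proposed remedy does not close, and this is a genuine gap.

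If you weight Lemma~\ref{lemma:Hx} by a factor of order $u_b^2$ so that its boundary term $\tfrac{\mu^2\rho_b}{u_b}\psi_x^2(-m_bt,t)$ drops to size $\tfrac12 m_b\psi_x^2$, then the $\mF$-dissipation also drops to $\sim u_b^2\,\omega(\rho_+)\int r^{2(n-2)}\mF^2$, while the $\mF$-source $C A_2(\rho_+)\int r^{2(n-2)}\mF^2$ coming from Lemma~\ref{lemma:psix} (weighted by $1$) keeps its full strength. Absorption then requires $u_b^2\,\omega(\rho_+)\gtrsim A_2(\rho_+)$, i.e.\ $u_b^2\gtrsim A_2/\omega\sim\rho_+^{\gamma-1}$. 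This is a \emph{lower} bound on $u_b$ in terms of $\rho_+$, which contradicts the hypotheses of the lemma: $\rho_+\in(0,\alpha]$ is fixed first and then the estimate must hold for \emph{every} $u_b<\ep(\rho_+)$, in particular for $u_b$ arbitrarily small. No fixed weighting between the two inequalities can avoid this, because the ratio of the two boundary coefficients is exactly $\mu^2/u_b^2$, and any weight small enough to tame it kills the $\mF$-dissipation by the same factor. The phrase "together with the admissible size of $u_b$" in your writeup is doing work that the hypotheses do not permit.

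The paper resolves the boundary term in an entirely different way: it never attempts to absorb $\tfrac{\mu^2\rho_b}{u_b}\psi_x^2(-m_bt,t)$ by the $m_b\psi_x^2$ dissipation. Instead, it applies the weighted Sobolev estimate (Proposition~\ref{prop:psiSobolev}) to $\psi_x$ with the interpolation parameter $\epsilon$ tuned to $\epsilon=(8C\mu\rho_b A_2)^{-1}u_b\,\omega$, turning the boundary value into volume integrals: the $\psi_{xx}^2$ contribution is calibrated to be half the available $\psi_{xx}$-dissipation $\tfrac{\mu\omega}{4CA_2}\int v^{-1}r^{4n-6}\psi_{xx}^2$, and the remaining $\mD$-type contribution carries the factor $A_3(\rho_+)/u_b^2$. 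The $1/u_b^2$ then cancels exactly against the $u_b^2$ produced by Corollary~\ref{corol:REE} under $|\rho_b-\rho_+|\le u_b^2$, leaving $CA_3(\rho_+)A_1(\rho_+)$; since $\gamma>1$ one has $A_3A_1\sim\rho_+^{3\gamma-2}\ll\rho_+^{\gamma}\sim\omega(\rho_+)$ for small $\rho_+$, so the remainder is absorbed by $\omega/8$ of the $\mF$-dissipation. This "trade the boundary value for volume dissipation via Sobolev, with $\epsilon$ chosen to match the powers of $u_b$" step is the missing ingredient in your proof.
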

\begin{proof}
Taking $k=2(n-2)$ in Lemma \ref{lemma:Hx} and \ref{lemma:psix}, we obtain that
\begin{gather*}
    \dfrac{d }{d t} \int_{S(t)}\!\! r^{2(n-2)} \mF^2\, d x + \omega(\rho_{+})\int_{S(t)}\!\! r^{2(n-2)}\mF^2 \, d x \le  \dfrac{\mu^2 \rho_b}{u_b} \psi_x^2(-m_b t,t)  + C(\rho_{+}) \mD(t), \\
    \dfrac{d }{d t}\int_{S(t)}\!\!\! r^{2(n-2)}\psi_x^2\,d x  + \dfrac{\mu}{2} \int_{S(t)}\!\!\!\! \frac{r^{4n-6}}{v}\psi_{xx}^2\, d x \le   C A_2(\rho_{+}) \int_{S(t)} \!\!\! r^{2(n-2)} \mF^2\,d x + C(\rho_+)\mD(t) . 
\end{gather*}
Multiplying the second inequality by $\big(2CA_2\big)^{-1} \omega$ and adding it to the first, we get
\begin{align}
    &\dfrac{d }{d t} \int_{S(t)}\!\! r^{2(n-2)} \Big\{ \mF^2 + \frac{\omega}{2C A_2} \psi_x^2 \Big\}\, d x  + \int_{S(t)}\!\! \Big\{ \dfrac{\omega}{2}r^{2(n-2)}\mF^2 + \dfrac{\mu\omega}{4C A_2} \dfrac{}{} \dfrac{r^{4n-6}}{v} \psi_{xx}^2 \Big\} \, d x \nonumber\\
    & \le \dfrac{\mu^2 \rho_b}{u_b} \psi_x^2(-m_b t,t)  + C(\rho_{+}) \mD(t). \label{temp:Fpsix}
\end{align}
Applying \eqref{priori-Lag} and Proposition \ref{prop:psiSobolev} with $\epsilon=(8C \mu \rho_b A_2)^{-1}u_b\omega$, we have
\begin{align*}
    \psi_x^2(-m_b t,t) \le& C \max\{1,\rho_{+}^{-1}\} \Big( 1 + \frac{\rho_{+} A_2}{u_b \omega} \Big) \int_{S(t)}\!\!\!\! \dfrac{r^{2(n-1)}\psi_x^2}{v}  dx +  \dfrac{u_b \omega}{8C \mu \rho_b A_2}\int_{S(t)}\!\!\!\! \dfrac{r^{4n-6}\psi_{xx}^2}{v} dx.
\end{align*}
Recalling the definition of $A_2$ in \eqref{A2} and $\omega$ in \eqref{mD}, we obtain that
\begin{equation}
    \dfrac{\mu^2 \rho_b}{u_b} \psi_x^2(-m_bt,t) \le \dfrac{\mu\omega}{8C A_2}\int_{S(t)}\!\!\!\! \dfrac{r^{4n-6}}{v}\psi_{xx}^2\, dx + C(\rho_{+}) \frac{\mD(t)}{u_b} + C \frac{A_3(\rho_{+})}{u_b^2}  \mD(t) , \label{bdryPsix2}
\end{equation}
where
\begin{equation}
A_3(\rho_+) \vcentcolon= \max\{1,\rho_{+}\}  \frac{\rho_{+}A_2(\rho_{+})}{\omega(\rho_{+})} = \frac{2^\gamma \mu}{\gamma K} \rho_{+}^{\gamma} \max\{1,\rho_{+}^3\}.\label{A3} 
\end{equation}
Substituting this in \eqref{temp:Fpsix}, we get
\begin{align*}
    &\dfrac{d }{d t} \int_{S(t)}\!\! r^{2(n-2)} \Big\{ \mF^2 + \frac{\omega}{2C A_2} \psi_x^2 \Big\}\, d x  + \int_{S(t)}\!\! \Big\{ \dfrac{\omega}{2}r^{2(n-2)}\mF^2 + \dfrac{\mu\omega}{8C A_2} \dfrac{}{} \dfrac{r^{4n-6}}{v} \psi_{xx}^2 \Big\} \, d x \nonumber\\
    & \le C \frac{A_3(\rho_{+})}{u_b^2} \mD(t)  + C(\rho_{+}) \Big\{ 1+ \frac{1}{u_b} \Big\} \mD(t).
\end{align*}
Integrating the above in time and using Corollary \ref{corol:REE}, we obtain that
\begin{align*}
    &\int_{S(\tau)}\!\! r^{2(n-2)} \Big\{ \mF^2 + \frac{\omega}{2C A_2} \psi_x^2 \Big\}\, d x \bigg\vert_{\tau=0}^{\tau=t} + \int_{0}^{t}\!\!\int_{S(\tau)}\!\! \Big\{ \dfrac{\omega}{2}r^{2(n-2)}\mF^2 + \dfrac{\mu \omega}{8C A_2} \dfrac{r^{4n-6}}{v} \psi_{xx}^2 \Big\} \, d x d\tau \\
    \le& C \frac{A_3(\rho_{+})}{u_b^2} \cdot A_1(\rho_{+}) \Big\{ u_b^2 + |\rho_b-\rho_+| + \frac{|\rho_b-\rho_+|^2}{u_b^2} \Big\} \int_{0}^{t}\!\!\int_{S(\tau)} r^{2(n-2)} \mF^2 d x d \tau  \\
    &+ C(\rho_{+}) \Big\{1+ \dfrac{1}{u_b} \Big\} \Big\{ u_b^2 + |\rho_b-\rho_+| + \frac{|\rho_b-\rho_+|^2}{u_b^2} \Big\} \int_{0}^{t}\!\!\int_{S(\tau)} r^{2(n-2)} \mF^2 d x d \tau.
\end{align*}
First, set $|\rho_b-\rho_+|\le u_b^2$. Then there exists a constant $\ep=\ep(\rho_+,\mu,\gamma,K,n)>0$ such that if $u_b + |\rho_b-\rho_+|\le \ep$, then
\begin{align}
   &\int_{S(\tau)}\!\! r^{2(n-2)} \Big\{ \mF^2 + \frac{\omega}{2C A_2} \psi_x^2 \Big\}\, d x \bigg\vert_{\tau=0}^{\tau=t} + \int_{0}^{t}\!\!\int_{S(\tau)}\!\! \Big\{ \dfrac{\omega}{2}r^{2(n-2)}\mF^2 + \dfrac{\mu \omega}{8C A_2} \dfrac{r^{4n-6}}{v} \psi_{xx}^2 \Big\} \, d x d\tau \nonumber\\
    \le& C A_3(\rho_{+}) A_1(\rho_{+})\int_{0}^{t}\!\!\int_{S(\tau)} r^{2(n-2)} \mF^2 d x d \tau + \frac{\omega(\rho_+)}{8}\int_{0}^{t}\!\!\int_{S(\tau)} r^{2(n-2)} \mF^2 d x d \tau. \label{2n-4:temp1}
\end{align}
We emphasize that the constant $C=C(\mu,\gamma,K,n)>0$ appearing in the above inequality is independent of $\rho_{+}$. By the definition of $A_1$ in \eqref{A1}  and $A_3$ in \eqref{A3},
\begin{align*}
    A_3(\rho_{+}) A_1(\rho_{+}) 
    =& \frac{2^\gamma \mu}{\gamma K} \max\{1,\rho_{+}^5\} \cdot \max\{ \rho_+^{3\gamma+1}, \rho_{+}^{5\gamma-3}, \rho_{+}^{3\gamma-2}, \rho_{+}^{2\gamma} \}.
\end{align*}
Since $\gamma>1$, the above expression implies that there is a constant $\alpha=\alpha(\mu,\gamma,K,n)>0$ which is independent of $\rho_+$ such that if $\rho_{+}\in (0,\alpha]$ then
\begin{equation*}
    C A_3(\rho_{+}) A_1(\rho_{+}) \le \frac{\gamma K }{2^{\gamma+3}\mu} \rho_+^{\gamma} =\frac{\omega(\rho_{+})}{8}.
\end{equation*}
Substituting the above into \eqref{2n-4:temp1}, we obtain
\begin{align}
   &\int_{S(\tau)}\!\! r^{2(n-2)} \Big\{ \mF^2 + \frac{\omega}{2C A_2} \psi_x^2 \Big\}\, d x \bigg\vert_{\tau=t} + \int_{0}^{t}\!\!\int_{S(\tau)}\!\! \Big\{ \dfrac{\omega}{4}r^{2(n-2)}\mF^2 + \dfrac{\mu \omega}{8C A_2} \dfrac{r^{4n-6}}{v} \psi_{xx}^2 \Big\} \, d x d\tau \nonumber\\
    &\le  \int_{S(\tau)}\!\! r^{2(n-2)} \Big\{ \mF^2 + \frac{\omega}{2C A_2} \psi_x^2 \Big\}\, d x \bigg\vert_{\tau=0} \le C(\rho_+) N^2(0). \label{2n-4:temp2}
\end{align}
Using $\mF\vcentcolon= \mu v^{-1} \phi_x - r^{1-n} \psi$, we apply triangular inequality in \eqref{2n-4:temp2} to get
\begin{align*}
      \sup\limits_{0\le t\le T}\int_{S(t)}\!\! r^{2(n-2)} \big\{ \phi_x^2 + \psi_x^2 \big\}\, d x +\iint_{\mQ(T)}\!\! \Big( r^{2(n-2)}\phi_x^2 +  r^{4n-6} \psi_{xx}^2 \Big) \, d x d t & \le C(\rho_{+}) N^2(0).
\end{align*}
Moreover, combining the above estimate with Corollary \ref{corol:REE}, we obtain that
\begin{align}
    &\sup\limits_{0\le t\le T} \int_{S(t)}\!\! \mE[v,u]\, d x + \mu\!\!\iint_{\mQ(T)}\!\! \Big\{ r^{2(n-1)}\psi_x^2 + \dfrac{\psi^2}{ r^2}   \Big\}\, d x \le C(\rho_{+}) N^2(0). \label{REE-final}
\end{align}
Finally the estimate for $m_b\int_{0}^T \psi_x^2 (-m_bt,t)dt$ is obtained by integrating \eqref{bdryPsix2} in $t\in[0,T]$ and substituting \eqref{2n-4:temp2}--\eqref{REE-final} in the resultant inequality.
\end{proof}

\begin{lemma}
    Suppose $(\phi,\psi)$ is a solution to \eqref{dLag} in $\mQ(T)$ for $T>0$ such that \eqref{priori-Lag} holds. Then there exist constants $\alpha=\alpha(\mu,\gamma,K,n)>0$ which is independent of $\rho_+$ and $\ep=\ep(\rho_+,\mu,\gamma,K,n)>0$, such that if $\rho_+\in (0,\alpha]$, $|\rho_b-\rho_+|\le u_b^2$ and $u_b< \ep$, then
    \begin{align*}
         & \sup\limits_{0 \le t\le T}\int_{S(t)}\!\!  r^{2(n-1)} \big\{  \phi_x^2 + \psi_{x}^2 \big\}(x,t) \, d x \le C(\rho_{+}) \Big\{1+ \frac{1}{u_b^2}\Big\}  N^2(0).
    \end{align*}
\end{lemma}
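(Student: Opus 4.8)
\emph{Proof sketch (plan).} This statement is the weight-$r^{2(n-1)}$ counterpart of Lemma~\ref{lemma:2n-4}, now with a supremum in time rather than a time integral on the left. The plan is to rerun the coupled scheme behind Lemma~\ref{lemma:2n-4} — combining Lemma~\ref{lemma:Hx} and Lemma~\ref{lemma:psix} with $k=2(n-1)$ — using as already-available input the conclusion of Lemma~\ref{lemma:2n-4}: in particular \eqref{REE-final}, which gives $\sup_{0\le t\le T}\int_{S(t)}\mE\,dx\le C(\rho_+)|N(0)|^2$ (hence by \eqref{Gvv} also $\sup_t\int_{S(t)}\psi^2\,dx\le C(\rho_+)|N(0)|^2$ and $\int_0^T\mD(\tau)\,d\tau\le C(\rho_+)|N(0)|^2$), together with the boundary-flux bound $m_b\int_0^T\psi_x^2(-m_b\tau,\tau)\,d\tau\le C(\rho_+)|N(0)|^2$. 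The one genuinely new difficulty is that for $k=2(n-1)$ the lower-order terms produced on the right of Lemmas~\ref{lemma:Hx}--\ref{lemma:psix}, namely $\int_{S(t)}v\psi^2\,dx$ and $\int_{S(t)}\psi^2\,dx$, carry no spatial decay (the weight $r^{k-2(n-1)}$ equals $1$) and hence cannot be swallowed by $\mD(t)$ as in the $k=2(n-2)$ case.

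First, for the $\mF$-estimate: apply Lemma~\ref{lemma:Hx} with $k=2(n-1)$ and exploit the genuine zeroth-order damping $\omega(\rho_+)\int_{S(t)}r^{2(n-1)}\mF^2\,dx$ sitting on its left-hand side. Gr\"onwall's inequality then controls $\sup_t\int_{S(t)}r^{2(n-1)}\mF^2\,dx$: the exponential factor $e^{-\omega(t-\tau)}$ lets the $\int v\psi^2\,dx$-contribution be bounded by $\omega^{-1}$ times its \emph{supremum} (which we already have), while the $\mD$-term and the boundary flux enter only through their time integrals; here $\tfrac{\mu^2\rho_b}{u_b}\int_0^T\psi_x^2(-m_b\tau,\tau)\,d\tau=\tfrac{\mu^2}{u_b^2}\,m_b\int_0^T\psi_x^2(-m_b\tau,\tau)\,d\tau$, which is where the factor $1/u_b^2$ enters. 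This yields $\sup_t\int r^{2(n-1)}\mF^2\,dx\le C(\rho_+)\{1+u_b^{-2}\}|N(0)|^2$. Since $\phi_x=\tfrac v\mu(\mF+r^{1-n}\psi)$ we have $r^{2(n-1)}\phi_x^2\le C\rho_+^{-2}(r^{2(n-1)}\mF^2+\psi^2)$, so the bound for $\sup_t\int r^{2(n-1)}\phi_x^2\,dx$ follows at once from the $\mF$-bound and $\sup_t\int\psi^2\,dx\le C(\rho_+)|N(0)|^2$.

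It remains to bound $\sup_t\int_{S(t)}r^{2(n-1)}\psi_x^2\,dx$, and this is the main obstacle. I would apply Lemma~\ref{lemma:psix} with $k=2(n-1)$, form the combination $\int r^{2(n-1)}(\mF^2+c\,\psi_x^2)\,dx$ with $c=\omega/(2CA_2)$ as in \eqref{temp:Fpsix} (so that the $CA_2\int r^{2(n-1)}\mF^2$ term on the right of the $\psi_x$-inequality is absorbed by the $\mF$-damping), absorb the boundary term $\tfrac{\mu^2\rho_b}{u_b}\psi_x^2(-m_b\tau,\tau)$ via Proposition~\ref{prop:psiSobolev} (with $\epsilon\sim u_b$) into $\tfrac{\mu\omega}{8CA_2}\int\tfrac{r^{4(n-1)}}{v}\psi_{xx}^2\,dx$ plus $\tfrac{C(\rho_+)}{u_b^2}\int r^{2(n-1)}\psi_x^2\,dx$, and then replace the last term using the elementary inequality $\int r^{2(n-1)}\psi_x^2\,dx\le\tfrac2{\rho_+}\mD(t)$ (valid because $v\le2/\rho_+$). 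Integrating the resulting differential inequality in time, the surviving terms on the right are $\int_0^T\mD\,d\tau$ — controlled by \eqref{REE-final} — and the genuinely new $\int_0^T\!\!\int_{S(\tau)}\psi^2\,dx\,d\tau$, which originates from term (vii) of Lemma~\ref{lemma:psix} and for which there is no zeroth-order dissipation at this weight. This last contribution is the point where the real work lies: it must be handled by adjoining to the scheme a suitable multiple of the relative-energy inequality (Corollary~\ref{corol:REE}), whose dissipation $\mu\int\tfrac{(n-1)v\psi^2}{4r^2}\,dx$ together with the smallness $A_1(\rho_+)A_3(\rho_+)\lesssim\omega(\rho_+)$ (which holds once $\rho_+\le\alpha$) allows both the $u_b^{-2}$-weighted $\mD$-feedback and the $\psi^2$-source to be reabsorbed, in the spirit of the bootstrap that closes Lemma~\ref{lemma:2n-4}. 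Summing the $\mF$-, $\phi_x$- and $\psi_x$-bounds then yields the assertion, with the factor $1/u_b^2$ inherited entirely from the boundary-flux term of the first step.
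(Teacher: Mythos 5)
Your first two steps match the paper exactly: the Gr\"onwall argument with the zeroth-order damping $\omega\int r^{2(n-1)}\mF^2\,dx$ from Lemma~\ref{lemma:Hx}, absorbing the undecaying $\int v\psi^2\,dx$ as a bounded source via the exponential factor, and then passing from $\mF$ to $\phi_x$ by the triangle inequality and $\sup_t\int\psi^2\le C(\rho_+)|N(0)|^2$ from \eqref{REE-final}. The problem is your third step.

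The gap is concrete. You propose to \emph{integrate in time} the differential inequality for $\int r^{2(n-1)}(\mF^2+c\psi_x^2)\,dx$. After time integration, the term \textrm{(vii)} in Lemma~\ref{lemma:psix} with $k=2(n-1)$ produces $\int_0^T\!\int_{S(\tau)}\psi^2\,dx\,d\tau$, which is \emph{not} controlled: we only have $\sup_t\int\psi^2\,dx\le C(\rho_+)|N(0)|^2$, so time integration yields a bound of order $T\,|N(0)|^2$, which blows up as $T\to\infty$. You acknowledge this and try to repair it by ``adjoining a multiple of Corollary~\ref{corol:REE}, whose dissipation $\mu\int\frac{(n-1)v\psi^2}{4r^2}\,dx$ \dots allows the $\psi^2$-source to be reabsorbed''; but that dissipation carries an extra weight $r^{-2}$, so it bounds $\int\frac{v\psi^2}{r^2}\,dx$, \emph{not} $\int\psi^2\,dx$. (The Hardy inequality, Proposition~\ref{prop:hardy}, also gives only $\int\frac{\psi^2}{r^{2n}}\,dx\lesssim\int\psi_x^2\,dx$, again with a decaying weight.) So the $\int_0^T\int\psi^2$ source cannot be reabsorbed by dissipation at this weight, and the integrated scheme does not close — unlike Lemma~\ref{lemma:2n-4}, where $k=2(n-2)$ makes the corresponding term $\int r^{-2}v\psi^2\,dx\lesssim\mD(t)$ and hence in $L^1_t$.

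The correct mechanism (and the one the paper uses) is the same one you already used in step~1: a \emph{Gr\"onwall} argument, not time integration, so that a merely bounded-in-time source can be absorbed by the exponential factor. Concretely, take Lemma~\ref{lemma:psix} with $k=2(n-1)$, add Corollary~\ref{corol:REE}, and Gr\"onwall the quantity $y(t):=\int_{S(t)}\big(\mE[v,u]+r^{2(n-1)}\psi_x^2\big)\,dx$: the dissipation $\mu\int\frac{r^{2(n-1)}\psi_x^2}{v}\,dx$ in Corollary~\ref{corol:REE} provides the needed zeroth-order damping for the $\psi_x^2$-part, while the source terms $\int\psi^2\,dx\lesssim\int\mE\,dx\le C(\rho_+)|N(0)|^2$ (bounded, from \eqref{REE-final}) and $\mD(t)$ (in $L^1_t$) are exactly what Gr\"onwall can handle. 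Also note that at this stage you need not re-couple $\mF$ at all: the term $CA_2\int r^{2(n-1)}\mF^2\,dx$ on the right of Lemma~\ref{lemma:psix} is already controlled in supremum by your step~1, and the boundary term $m_b\psi_x^2(-m_bt,t)$ in Lemma~\ref{lemma:psix} sits on the \emph{good} side — it is only the $\mF$-inequality (Lemma~\ref{lemma:Hx}) that produces it as a bad term, and that was already dealt with in step~1. So the coupling $\mF^2+c\psi_x^2$ you propose is both superfluous and, combined with time integration, leads you into the unclosable $\int_0^T\int\psi^2$ term.
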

\begin{proof}
Taking $k=2(n-1)$ in Lemmas \ref{lemma:Hx}, it follows from \eqref{priori-Lag} that
\begin{gather*}
    \begin{aligned}
        &\dfrac{d }{d t} \int_{S(t)}\!\! r^{2(n-1)} \mF^2\, d x + \omega(\rho_{+})\int_{S(t)}\!\! r^{2(n-1)}\mF^2 \, d x\\ 
        \le& C(\rho_+)\int_{S(t)}\!\!\psi^2\, dx + C(\rho_+)\tilde{\mD}(t) \le C N^2(0) + C\tilde{\mD}(t), 
    \end{aligned} 
\end{gather*}
where $\tilde{\mD}(t) \vcentcolon= \mD(t) + \frac{\rho_b}{u_b} \psi_x^2(-m_b t,t)$. It follows from Lemma \ref{lemma:2n-4} that 
\begin{equation*}
    u_b^2 \int_{0}^T \! \tilde{\mD}(t)\, d t \le C(\rho_{+}) N^2(0).
\end{equation*}
Thus applying Gr\"onwall's lemma, we obtain
\begin{align*}
    \sup\limits_{0\le t\le T}\int_{S(t)}\!\! r^{2(n-1)} \mF^2\, dx \le C(\rho_{+}) \Big\{ 1 + \dfrac{1}{u_b^2}  \Big\} N^2(0). 
\end{align*}
By \eqref{REE-final} and triangular inequality, we get
\begin{align*}
    \sup\limits_{0\le t\le T}\int_{S(t)}\!\! r^{2(n-1)} \phi_x^2\, dx \le C(\rho_+) \Big\{ 1 + \dfrac{1}{u_b^2}  \Big\} N^2(0).
\end{align*}
In addition, combining Lemma \ref{lemma:psix} with $k=2(n-1)$ and Corollary \ref{corol:REE} gives
\begin{align*}
        &\dfrac{d}{dt} \int_{S(t)}\!\!\! \big(  \mE[v,u] + r^{2(n-1)} \psi_x^2\big)\, dx + C(\rho_+)^{-1}\int_{S(t)}\!\!\! \big(  \mE[v,u] + r^{2(n-1)} \psi_x^2\big)\, dx\\
        &\le C(\rho_+) \bigg\{ \int_{S(t)}\!\! \mE[v,u]\,d x + \int_{S(t)} r^{2(n-1)} \mF^2\,d x + \mD(t) \bigg\} \le C(\rho_+) \big\{ N^2(0) + \mD(t) \big\},
\end{align*}
Therefore by the Gr\"onwall lemma used previously, we also obtain
\begin{align*}
    \sup\limits_{0\le t\le T}\int_{S(t)}\!\! r^{2(n-1)} \psi_x^2\, dx \le C(\rho_+) N^2(0).
\end{align*}
This concludes the proof and the proof of Theorem \ref{thm:Lag}.
\end{proof}

To extend the solution beyond the maximal time of existence with the help of Theorem \ref{thm:Lag}, we use the local-in-time well-posedness, Lemma \ref{lm:local}. For this, we need to recover the H\"older regularity of $(\rho,u)$, which is stated in the following.
\begin{proposition}\label{prop:schauder}
    Denote $Q_T\vcentcolon=[1,\infty)\times[0,T]$. Let initial data $(\rho_0,u_0)$ satisfy the same assumption of Theorem \ref{mth}. Suppose $(\rho,u)$ is a solution to the system \eqref{ns}, \eqref{stbdry} and \eqref{u+} in the time interval $t\in[0,T]$ for some $T>0$. Then there exists a constant $C_0>0$ independent of $T>0$ such that
    \begin{equation*}
        \| \rho \|_{\mB^{1+\sigma/2,1+\sigma}(Q_T)} + \| u \|_{\mB^{1+\sigma/2,2+\sigma}(Q_T)} \le C_0 e^{C_0 T}.
    \end{equation*}
\end{proposition}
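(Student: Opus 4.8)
The plan is to promote the $H^{1}$ a-priori bounds of Theorem~\ref{thm:Lag} (which, together with \eqref{priori-Rho} and \eqref{priori-Lag}, are available for $(\rho,u)$ on $[0,T]$ under the hypotheses of Theorem~\ref{mth}) to the H\"older bounds asserted, by the classical parabolic--hyperbolic bootstrap for the compressible Navier--Stokes system, keeping track of the dependence on $T$. The factor $e^{C_{0}T}$ will be produced by compounding the gain of a short-time argument over $\lceil T/\tau\rceil$ time slabs of a fixed length $\tau=\tau(\mu,\gamma,K,n,\rho_{+})$.

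First I would record the background bounds. Transferring Theorem~\ref{thm:Lag} to the Eulerian variables via \eqref{NEL} and \eqref{priori-Rho} gives, uniformly in $T$, $\tfrac12\rho_{+}\le\rho\le2\rho_{+}$ and $\sup_{[0,T]}\|r^{\frac{n-1}{2}}(\rho-\rt,u-\ut)\|_{H^{1}}\le C(\rho_{+})$, hence $\|\rho\|_{L^{\infty}(Q_{T})}+\|u\|_{L^{\infty}(Q_{T})}\le C(\rho_{+})$ by the one-dimensional Sobolev inequality. The decisive point is the quantity $\mF$ of \eqref{HEq}: since $\partial_{t}$ is the material derivative in Lagrangian variables, \eqref{HEq} is a pointwise-in-$x$ linear ODE in $t$ with zeroth-order coefficient bounded below by $\tfrac{\gamma K}{2^{\gamma}\mu}\rho_{+}^{\gamma}>0$ and bounded right-hand side (using $\|(\phi,\psi)(\cdot,t)\|_{L^{\infty}}\le C(\rho_{+})N(t)$ by Sobolev, $\|q\|_{L^{\infty}(\mQ(T))}\le C(\rho_{+})$ by Lemma~\ref{le:st}, \eqref{urg} and $|\rho_{b}-\rho_{+}|\le u_{b}^{2}$, and the decay of $\vt_{r}$ from \eqref{ur}), with $\mF(\cdot,0)\in L^{\infty}$ since $\rho_{0}\in\mB^{1+\sigma}$. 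Integrating this ODE yields $\|\mF\|_{L^{\infty}(\mQ(T))}\le C(\rho_{+})$ \emph{uniformly in} $T$. As $\mF=\mu v^{-1}\phi_{x}-r^{1-n}\psi$, this gives $\|\phi_{x}(\cdot,t)\|_{L^{\infty}}\le C(\rho_{+})$, hence, through the Jacobian $R_{x}=r^{1-n}/\rho$, a local-in-$r$ bound on $\rho_{r}$ (global after weighting, the decay at infinity being furnished by the weights of the energy norm). This is what defeats the apparent one-derivative loss in the coupling.

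Next I would run linear parabolic regularity for $u$ and the transport structure for $\rho$. With $\tfrac12\rho_{+}\le\rho\le2\rho_{+}$ and $P(\rho)_{r}=\gamma K\rho^{\gamma-1}\rho_{r}\in L^{\infty}_{\mathrm{loc}}$, equation \eqref{nse2} becomes the uniformly parabolic Dirichlet problem $u_{t}-\tfrac{\mu}{\rho}\big(u_{rr}+\tfrac{n-1}{r}u_{r}-\tfrac{n-1}{r^{2}}u\big)+u\,u_{r}=-\rho^{-1}P(\rho)_{r}$ with $u(1,t)=u_{b}$, bounded coefficients and $L^{\infty}_{\mathrm{loc}}$ forcing; classical $L^{p}$ parabolic estimates (interior in time away from $t=0$, and the local theory Lemma~\ref{lm:local} near $t=0$) give $u\in W^{2,1}_{p,\mathrm{loc}}$ for all $p<\infty$, hence $u_{r},u_{t}\in C^{\alpha,\alpha/2}_{\mathrm{loc}}$ for all $\alpha<1$ by Sobolev embedding in one space dimension, so $u_{r}\in L^{\infty}(Q_{T})$ and, from $\rho_{t}=-u\rho_{r}-\rho(u_{r}+\tfrac{n-1}{r}u)$, $\rho$ is Lipschitz on $Q_{T}$. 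Then I close the Schauder bootstrap slab by slab: on $[t_{j},t_{j+1}]$ with $t_{j+1}-t_{j}=\tau$, the boundary-parabolic Schauder estimate gives $\|u\|_{\mB^{1+\frac{\sigma}{2},2+\sigma}}\le C\big(\|u(\cdot,t_{j})\|_{\mB^{2+\sigma}}+\|\rho_{r}\|_{\mB^{\frac{\sigma}{2},\sigma}}+1\big)$, and differentiating the continuity equation and integrating the resulting linear ODE for $\rho_{r}$ along the characteristics $\tfrac{dr}{ds}=u$ — which, since $u_{b}>0$, all enter the slab through $\{t=t_{j}\}$ or through $\{r=1\}$ where $\rho\equiv\rho_{b}$ (admissible by \eqref{compa}), and whose flow map is $C^{1,\sigma}$ because $u_{r}\in C^{\sigma}$ — gives $\|\rho_{r}\|_{\mB^{\frac{\sigma}{2},\sigma}}\le e^{C\tau}\|\rho_{r}(\cdot,t_{j})\|_{\mB^{\sigma}}+C\tau^{1-\frac{\sigma}{2}}\|u\|_{\mB^{1+\frac{\sigma}{2},2+\sigma}}+C\tau$. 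Inserting the first bound into the second and taking $\tau$ so small that $C\tau^{1-\sigma/2}\le\tfrac12$ closes both, yielding $\|u\|_{\mB^{1+\frac{\sigma}{2},2+\sigma}}+\|\rho\|_{\mB^{1+\frac{\sigma}{2},1+\sigma}}\le K_{0}\big(\|u(\cdot,t_{j})\|_{\mB^{2+\sigma}}+\|\rho(\cdot,t_{j})\|_{\mB^{1+\sigma}}+1\big)$ on $[t_{j},t_{j+1}]$ with $K_{0}\ge1$ depending only on the fixed parameters and the initial norms; iterating over the $\lceil T/\tau\rceil$ slabs bounds the norms at $T$ by $(2K_{0})^{\lceil T/\tau\rceil}$ times the data norms, and a last application of the slab estimate gives $\|\rho\|_{\mB^{1+\frac{\sigma}{2},1+\sigma}(Q_{T})}+\|u\|_{\mB^{1+\frac{\sigma}{2},2+\sigma}(Q_{T})}\le C_{0}e^{C_{0}T}$ with $C_{0}:=\tau^{-1}\log(2K_{0})$ enlarged to absorb the data.

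The hard part is the single-slab step: matching the boundary-parabolic Schauder estimate for $u$ up to the inflow boundary $\{r=1\}$ with the transport estimate for $\rho_{r}$ in spite of the mismatch $\rho_{r}\leftrightarrow u_{rr}$ in the coupling, which is the classical obstruction broken by the short-time smallness (as in \cite{itaya85}); the preparatory $T$-uniform $L^{\infty}$ bound on $\mF$ coming from the favourable ODE structure of \eqref{HEq} is what lets the bootstrap start, and the spatial weights built into the energy norm are what make the argument work on the unbounded domain.
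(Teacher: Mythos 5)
The paper gives no proof of Proposition~\ref{prop:schauder}: it states that the result follows from the parabolic Schauder theory of \cite{friedman64,lady} as applied in \cite{k-n-z03}, and skips the details. Your proposal fleshes out exactly that referenced strategy — linear Schauder estimate up to the inflow boundary for the momentum equation, transport of $\rho_r$ along characteristics that all exit through $\{t=t_j\}$ or $\{r=1\}$ since $u_b>0$, short-time smallness $C\tau^{1-\sigma/2}\le\tfrac12$ to defeat the $\rho_r\leftrightarrow u_{rr}$ derivative loss, and slab-by-slab iteration yielding the $e^{C_0T}$ factor — so in substance this is the same route as the paper.

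One point worth flagging in your preliminary step: the uniform $L^\infty$ bound on $\mF$ obtained by integrating the Lagrangian ODE \eqref{HEq} controls $\phi_x=\mu^{-1}v(\mF+r^{1-n}\psi)$ pointwise, but translating back to Eulerian variables via $R_x=r^{1-n}/\rho$ gives only $|v_r-\vt_r|\lesssim r^{n-1}$, not a uniform bound on $\rho_r$. The energy weights $\|r^{\frac{n-1}{2}}(\rho-\rt)_r\|_{L^2}$ give integrated decay, not pointwise decay, so the parenthetical ``global after weighting'' is not immediate. This does not sink the argument — the Schauder slab iteration of \cite{k-n-z03} does not require a $T$-uniform $L^\infty$ bound on $\rho_r$ in advance (the short-time smallness together with the uniform $L^\infty$ bounds on $\rho$ itself, and the bounded initial data, suffice to start the bootstrap, and the slab constant $K_0$ is then controlled by a fixed-point argument on each slab of uniformly small length) — but the $\mF$ bound should be regarded as a helpful supplementary ingredient rather than the step ``that lets the bootstrap start.''
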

The above proposition is proved using Schauder theory for the parabolic equations (See \cite{friedman64} and \cite{lady}). In \cite{k-n-z03}, the same argument is used to prove the H\"older regularity of solutions to the $1$ dimensional isentropic outflow problem. Since Proposition \ref{prop:schauder} is shown following a procedure similar to the one given in \cite{k-n-z03}, we skip its proof in the present paper. For more examples on the applications of Schauder theory to different problems related to the compressible Navier Stokes equations, we refer readers to the papers \cite{k-n81}, \cite{NNY} and \cite{NN}.

Next, we show the global-in-time existence of $(\rho,u)$. Recall that $N_E$, $M_E$ are the energy norms in Eulerian coordinate variables, defined in \eqref{NE}--\eqref{ME}. Then by \eqref{NEL} and Theorem \ref{thm:Lag}, it follows that 
\begin{equation}\label{mth-Euler}
    \sup\limits_{0\le t \le T} N_{E}^2(t) + M_{E}^2(T) \le C \Big\{ 1 + \frac{1}{u_b^2} \Big\} N_E^2(0). 
\end{equation}
Fix $\rho_+\in (0,\alpha]$ and $0<u_b + |\rho_b-\rho_+| \le \ep $. Applying the standard continuity argument on \eqref{mth-Euler}, we find $\ep_0=\ep_0(u_b,\rho_+,\mu,\gamma,K,n)>0$ such that if $N_E(0)\le \ep_0$ then the solution $(\rho,u)$ exists globally in time with $\sup_{ t \ge 0 }N_{E}(t) + M_E(\infty) <\infty$. 

To conclude, the asymptotic convergence \eqref{Tasymp} is obtained as follows. 
Applying the Cauchy-Schwarz's inequality and the estimate \eqref{mth-Euler}, we get for all $t\ge 0$,
\begin{align*}
&\sup_{r\ge 1} |u(r,t)-\ut(r)|^2
\le C \Big( \int_1^\infty \frac{(u-\ut)^2}{r^{n-1}}(r,t) \, \dif r \Big)^{\frac{1}{2}},\\
&\sup_{r \ge 1} |\rho(r,t)-\rt(r)|^2 
\le C \Big( \int_1^\infty \frac{(\rho-\rt)_r^2}{r^{n-1}}(r,t) \, \dif r \Big)^{\frac{1}{2}}.
\end{align*}
Thus, in order to prove \eqref{Tasymp}, it suffices to show that as $t\to \infty$,
\begin{equation}
\mathcal{I}_1(t)
\vcentcolon=
\int_1^\infty \frac{(u-\ut)^2}{r^{n-1}}(r,t) \, \dif r
\to 0, 
\qquad 
\mathcal{I}_2(t)
\vcentcolon=
\int_1^\infty \frac{(\rho-\rt)_r^2}{r^{n-1}}(r,t) \, \dif r
\to 0.
\label{temp:I1I2}
\end{equation}
To verify \eqref{temp:I1I2}, we calculate $d\mathcal{I}_{i}/dt$, for $i=1,\,2$, using the equations \eqref{ns}. Then we show that $\mathcal{I}_i$, $|d\mathcal{I}_{i}/dt| \in L^1(0,\infty)$ for $i=1,\,2$. The detail for this is abbreviated since the procedure is standard   and the same derivations are found in \cite{k-n-z03}, \cite{k-n81}, \cite{NNY}
and
\cite{NN}. 

\section*{Acknowledgement}
The authors would like to thank the referee for their attentive revision and thoughtful suggestions on the paper.


\begin{thebibliography}{10}

\footnotesize

\bibitem{kaz90}
{\sc S.~N. Antontsev, A.~V. Kazhikhov and V.~N. Monakhov}, {\em Boundary value
  problems in mechanics of nonhomogeneous fluids}, North Holland, 1990.

\bibitem{friedman64}
{\sc A.~Friedman}, {\em Partial differential equations of parabolic type},
  Prentice Hall, 1964.
  
\bibitem{h-m21}
I.~Hashimoto and A.~Matsumura.
\newblock Existence of radially symmetric stationary solutions for the
compressible {N}avier-{S}tokes equation.
\newblock {\em Methods Appl. Anal.}, 28(3):299--311, 2021.

\bibitem{h-n-s23}
{\sc I.~Hashimoto, S.~Nishibata and S.~Sugizaki}, {\em Asymptotic behavior of spherically symmetric solutions to the compressible Navier-Stokes equations towards stationary waves}, \newblock {\em To appear in J. Math. Fluid Mech.}, 2024.


\bibitem{higuchi92}
{\sc K.~Higuchi}, {\em Global existence of the spherically symmetric solution
  and the stability of the stationary solution to compressible
  {N}avier-{S}tokes equation}, Master thesis of Kanazawa Univ.,  (1992),
\newblock Japanese.

\bibitem{h-n24} Y. Huang and S. Nishibata, Large-time behaviour of the spherically symmetric solution to an outflow problem for isentropic model of compressible viscous fluid, \textit{arXiv: 2308.10206 Submitted}, 2024.

\bibitem{itaya71}
{\sc N.~Itaya}, {\em On the {C}auchy problem for the system of fundamental
  equations describing the movement of compressible viscous fluid}, Kodai Math.
  Sem. Rep., 23 (1971), pp.~60--120.


\bibitem{itaya85}
{\sc N.~Itaya}, {\em On a certain
  temporally global solution, spherically symmetric, for the compressible {NS}
  equations}, The Jinbun ronshu of Kobe Univ. Commun., 21 (1985), pp.~1--10,
\newblock Japanese.

\bibitem{jiang96}
{\sc S.~Jiang}, {\em Global spherically symmetric solutions to the equations of
  a viscous polytropic ideal gas in an exterior domain}, Commun. Math. Phys.,
  178 (1996), pp.~339--374.

\bibitem{k-n-z03}
{\sc S.~Kawashima, S.~Nishibata and P.~Zhu}, {\em Asymptotic stability of the
  stationary solution to compressible {N}avier-{S}tokes equations in the half
  space}, Commun. Math. Phys., 240 (2003), pp.~483--500.

\bibitem{k-n81}
{\sc S.~Kawashima and T.~Nishida}, {\em Global solutions to the initial value
  problem for the equations on one-dimensional motion of viscous polytropic
  gases}, J. Math. Kyoto Univ., 21 (1981), pp.~825--837.

\bibitem{kaz77}
{\sc A.~V. Kazhikhov and V.~V. Shelukhin}, {\em Unique global solution with
  respect to time of initial-boundary value problems for one-dimensional
  equations of a viscous gas}, J. Appl. Math. Mech., 41 (1977), pp.~273--282.

\bibitem{lady}
{\sc O.~A. Ladyzenskaja, V.~A. Solonnikov and N.~N. Uralceva},
{\em Linear and quasi-linear equations of parabolic type},
  American Mathematical Society, 1968.

\bibitem{matsu92}
{\sc A.~Matsumura}, {\em Large-time behavior of the spherically symmetric
  solutions of an isothermal model of compressible viscous gas}, Trans. theorem
  and statist. phys., 21 (1992), pp.~579--592.

\bibitem{m2001}
Akitaka Matsumura.
\newblock Inflow and outflow problems in the half space for a one-dimensional
  isentropic model system of compressible viscous gas.
\newblock {\em Nonlinear Analysis: Theory, Methods \& Applications},
  47(6):4269--4282, 2001.

\bibitem{m-n83}
{\sc A.~Matsumura and T.~Nishida}, {\em Initial boundary value problems for the
  equations of motion of compressible viscous and heat--conductive fluids},
  Commun. Math. Phys., 89 (1983), pp.~445--464.


\bibitem{m-n01}
{\sc A.~Matsumura and K.~Nishihara}, {\em Large-time behaviors of solutions to
  an inflow problem in the half space for a one-dimensional system of
  compressible viscous gas}, Commun. Math. Phys., 222 (2001), pp.~449--474.


\bibitem{okada02}
{\sc
\v{S}. Matu\v{s}$\overset{\circ}{\textsc{u}}$-Ne\v{c}asov\'{a},
M.~Okada,
and T.~Makino},
{\em
  Free boundary problem for the equation of
  spherically symmetric motion of viscous gas III},
  Japan J. Indust. Appl. Math., 14
  (1997), pp.~199--213.
  
\bibitem{NNY}
{\sc T. Nakamura, S. Nishibata and S. Yanagi}, {\em Large-time behavior of spherically symmetric solutions to an isentropic model of compressible viscous fluid in a field of potential forces}, Math. Models Methods Appl. Sci., 14 (2004), pp.~1849--1879. 

\bibitem{n-n08}
T.~Nakamura and S.~Nishibata.
\newblock Large-time behavior of spherically symmetric flow of heat-conductive
gas in a field of potential forces.
\newblock {\em Indiana Univ. Math. J.}, 57(2):1019--1054, 2008.


\bibitem{NN}
{\sc T. Nakamura, S. Nishibata},
{\em Stationary wave associated with an inflow problem in the half line for 
viscous heat-conductive gas},
J. Hyperbolic Differ. Equ., 8
(2011), pp.~651--670. DOI: 10.1142/S0219891611002524.


  
\bibitem{nagasawa}
{\sc T.~Nagasawa}, {\em One-dimensional analysis for the motion of compressible
  viscous heat-conductive fluid}, Doctoral thesis of Keio Univ.,  (1988).




\bibitem{Tani}
{\sc A.~Tani},
On the first initial-boundary problem of compressible
viscous fluid motion,
Publ. RIMS, Kyoto Univ.,
13 (1977), pp.~193--253.

\bibitem{yanagi00}
{\sc S.~Yanagi}, {\em Asymptotic stability
  of the spherically symmetric solutions for a viscous polytropic gas in a
  field of external forces}, Trans. theorem and statist. phys., 29 (2000),
  pp.~333--353.

\end{thebibliography}
\end{document}